\documentclass[a4paper,11pt]{article}
\usepackage{inputenc}
\usepackage{authblk}
\usepackage[table]{xcolor}
\usepackage{color}
\usepackage{amsmath}
\usepackage{amssymb}
\usepackage{amsthm}
\usepackage{booktabs}
\usepackage{xcolor,graphicx,float}
\usepackage{hyperref}
\usepackage{soul}
\usepackage{indentfirst}
\hypersetup{colorlinks=true,
	linkcolor=blue,
	anchorcolor=blue,
	urlcolor=black,
	citecolor=blue}
\usepackage{enumitem}
\setlist[itemize]{itemsep=0pt, topsep=0pt}
\usepackage{tikz}
\usepackage{subcaption}
\usepackage{caption}

\definecolor{yalv}{RGB}{150,194,78}
\definecolor{tonglv}{RGB}{43,174,133}
\usepackage[
a4paper,
textwidth=16cm,
textheight=23cm,
centering
]{geometry}

\theoremstyle{plain}
\newtheorem{theorem}{\bf Theorem}[section]
\newtheorem{lemma}[theorem]{\bf Lemma}

\newtheorem{corollary}[theorem]{\bf Corollary}
\newtheorem{conjecture}{\bf Conjecture}
\newtheorem{algorithm}{\bf Algorithm}

\theoremstyle{remark}

\newtheorem{construction}{\bf Construction}
\newtheorem{claim}{\bf Claim}

\newtheorem{assumption}{\bf Assumption}

\numberwithin{equation}{section}

\title{\bf Sharp stability for cross $t$-intersecting families of permutations in the linear range}
\author[1]{Jie Wen\thanks{E-mail: \text{jwen@mail.bnu.edu.cn}}}
\author[1]{Benjian Lv\thanks{Corresponding author. E-mail: \text{bjlv@bnu.edu.cn}}}
\affil[1]{\small Laboratory of Mathematics and Complex Systems (Ministry of Education), School of Mathematical Sciences, Beijing Normal University, Beijing 100875, China}
\date{}

\begin{document}
	\renewcommand{\baselinestretch}{1.2}
	\maketitle
	\begin{abstract}
		\medskip
 Two families $\mathcal{F},\mathcal{G}\subseteq S_n$ are called cross $t$-intersecting if every $\sigma\in\mathcal{F}$ and
$\tau\in\mathcal{G}$ agree on at least $t$ points. A $t$-coset is a coset of the stabilizer of $t$ points. A subset of $S_n$ is non-trivial if it is not contained in any $t$-coset. Let $d_m$ denote the $m$-th derangement number. We prove that, for all $t\geq1$ and all $n\geq400t$, every pair of cross $t$-intersecting families $\mathcal{F},\mathcal{G}\subseteq S_n$ satisfies the following:
\begin{itemize}
	\item$|\mathcal{F}||\mathcal{G}|\leq((n-t)!-d_{n-t}-d_{n-t-1})((n-t)!+t)$ if $\mathcal{F}\cup\mathcal{G}$ is non-trivial.
	\item$|\mathcal{F}||\mathcal{G}|\leq((n-t)!-d_{n-t}-d_{n-t-1}+t)^2$ if both $\mathcal{F}$ and $\mathcal{G}$ are non-trivial.
	\item$\min\{|\mathcal{F}\setminus\mathcal{C}|,|\mathcal{G}\setminus\mathcal{C}|\}\leq t((n-t-1)!-(n-t-2)!)$ for some $t$-coset $\mathcal{C}$ if $t\geq2$.
\end{itemize}
We also characterize all extremal configurations. The first result extends a theorem of Ellis (2011) to an exponentially wider range and sharpens the stability theorem of Keller, Lifshitz, Minzer and Sheinfeld (2024); the second establishes a product version of the classical Hilton--Milner--Frankl theorem for permutations; and the third settles the remaining cases $t\geq2$ of a conjecture of Ellis (2011) in a stronger form. In all three results, the linear dependence on $t$ is essentially optimal. Our proofs are based on the spread approximation method introduced by Kupavskii and Zakharov and on an approach to cross $t$-intersection problems developed by the present authors, with several essential refinements. The techniques may apply to cross-intersection problems for a variety of combinatorial objects. As an application, we prove a product version of the Hilton--Milner--Frankl theorem for the alternating group. \vspace{1em}

		\noindent {\em AMS classification:}\;05D05, 05E16
		
		\noindent {\em Keywords:\;}Cross $t$-intersecting families; Permutations; Hilton--Milner--Frankl theorem; Stability; $t$-diversity; Spread approximation
		
	\end{abstract}
	\section{Introduction}
	\subsection{Background}
Let $[n]=\{1,2,\ldots,n\}$, and write $\binom{[n]}{k}$ for the family of all $k$-subsets of $[n]$. A family $\mathcal{F}\subseteq\binom{[n]}{k}$ is \emph{$t$-intersecting} if any two of its members have at least $t$ elements in common. The celebrated Erd\H{o}s--Ko--Rado theorem \cite{Erdos-Ko-Rado-1961} states that, for fixed $k$ and $t$ and sufficiently large $n$, every such family has at most $\binom{n-t}{k-t}$ members, with equality precisely for the families consisting of all $k$-subsets containing a fixed $t$-subset. It was a major open problem to determine the least possible value $n_0(k,t)$ for $n$ such that the upper bound holds. The original paper \cite{Erdos-Ko-Rado-1961} proved $n_0(k,1)=2k$, and established $n_0(k,t)\leq(k-t)\binom{k}{t}^3+t$. Frankl \cite{Frankl-1976} made a breakthrough by determining $n_0(k,t)=(t+1)(k-t+1)$ for $t\geq 15$. Wilson \cite{Wilson-1984} subsequently showed $n_0(k,t)=(t+1)(k-t+1)$ for all $t\geq1$, via an ingenious algebraic proof. Ahlswede and Khachatrian \cite{Ahlswede-Khachatrian-1997} completely settled the problem by 
establishing the famous \emph{complete intersection theorem}, which determines $t$-intersecting families of $\binom{[n]}{k}$ with maximum size for all values of $n, k$, and $t$. Besides finding largest $t$-intersecting families, another longstanding and active direction of research concerns the problem of characterizing the structure of large $t$-intersecting families. A $t$-intersecting family $\mathcal{F}\subseteq\binom{[n]}{k}$ is \emph{non-trivial} if $|\cap_{F\in\mathcal{F}}F|<t$, that is, no $t$-subset is contained in all members of $\mathcal{F}$. The seminal Hilton--Milner--Frankl theorem \cite{Hilton-Milner-1967,Frankl-1978} determines largest non-trivial $t$-intersecting families in $\binom{[n]}{k}$.

Intersection problems arise naturally for many combinatorial objects (see, e.g., \cite{Ellis-2011,Ellis-Filmus-Friedgut-2012,Erdos-Szekely-2000,Frankl-Furedi,Frankl-Wilson-1986,Ihringer,Ku-Renshaw-2008,Liao-2024,Meagher-Sin,Moon,Scott}), and their study has inspired a variety of insightful results. For a systematic introduction to intersection problems, we refer the reader to the surveys \cite{Ellis-book,Frankl-Tokushige-2016} and the monographs \cite{Frankl-Tokushige-book,Godsil-Meagher-book}. 

In this paper, we focus on permutations, one of the most thoroughly studied objects in this area. Let us introduce some notation. A family $\mathcal{F}\subseteq S_n$ is \emph{$t$-intersecting} if any two of its members agree on at least $t$ points. A \emph{$t$-coset} is a coset of the stabilizer of $t$ points. A subset of $S_n$ is \emph{non-trivial} if it is not contained in any $t$-coset. Two families $\mathcal{F},\mathcal{G}\subseteq S_n$ are called \emph{cross $t$-intersecting} if every $\sigma\in\mathcal{F}$ and $\tau\in\mathcal{G}$ agree on at least $t$ points. We say that a pair $(\mathcal{F},\mathcal{G})$ of cross $t$-intersecting families is \emph{non-trivial} if they are not both contained in the same $t$-coset, that is, $\mathcal{F}\cup\mathcal{G}$ is non-trivial. 

In 1977, Deza and Frankl \cite{Frankl-Deza-1977} proved that every $1$-intersecting family in $S_n$ has size at most $(n-1)!$. The bound is tight, as witnessed by a $1$-coset. They further conjectured that, when $n$ is sufficiently large depending on $t$, every $t$-intersecting family has size at most $(n-t)!$. The problem turned out to be surprisingly difficult. Even for $t=1$, the characterization of extremal families remained open until 2003, when Cameron and Ku \cite{Cameron-Ku-2003} showed that the only extremal families are $1$-cosets. An independent proof was given by Larose and Malvenuto \cite{Larose-Malvenuto-2004}. A major breakthrough came in 2011, when Ellis, Friedgut and Pilpel \cite{Ellis-Friedgut-Pilpel-2011} proved that, for $n$ sufficiently large depending on $t$, every pair $(\mathcal{F},\mathcal{G})$ of cross $t$-intersecting families in $S_n$ satisfies $|\mathcal{F}||\mathcal{G}|\leq((n-t)!)^2$, with equality only if the pair is trivial. By taking $\mathcal{F}=\mathcal{G}$, they proved that the maximum-sized $t$-intersecting families in $S_n$ are precisely the $t$-cosets. Their proof uses eigenvalue techniques, together with the representation theory of $S_n$.

In 2024, Kupavskii and Zakharov \cite{Kupavskii-Zakharov-2024} introduced the powerful \emph{spread approximation method}. The method builds on the breakthrough of Alweiss, Lovett, Wu and Zhang \cite{sunflower} on the Erd\H{o}s--Rado sunflower conjecture, and it is purely combinatorial, involving the important pseudorandom notion of \emph{spreadness}. Using this method, Kupavskii and Zakharov proved the Erd\H{o}s--Ko--Rado theorem of permutations for $n/(\log n)^2=\Omega(t)$, extending the theorem of Ellis, Friedgut and Pilpel \cite{Ellis-Friedgut-Pilpel-2011} to an exponentially wider range. They also obtained significant improvements for the more difficult forbidden intersection problem for permutations, extending earlier work of Ellis and Lifshitz \cite{Ellis-Lifshitz-2022} based on the junta method and representation theory. The method has since been further developed and has led to substantial progress on a variety of open problems (see, e.g., \cite{Kupavskii-2024,Saengrungkongka,Frankl-Kupavskii-2025,Kupavskii-2026,KKLS}).

For general $t\geq2$, the linear range for intersection problems for permutations was reached by Keller, Lifshitz, Minzer and Sheinfeld \cite{KLMS}, who established the following product and stability results for cross $t$-intersecting families.
\begin{theorem}[\cite{KLMS}]\label{thmKLMS}
	There exists $c_0>0$ such that the following holds for all $t\geq1$ and all $n\geq c_0t$. Let $\mathcal{F},\mathcal{G}\subseteq S_n$ be cross t-intersecting families. Then:
	\begin{itemize}
		\item[\rm(i)] $|\mathcal{F}||\mathcal{G}|\leq(n-t)!^2$.
		\item[\rm(ii)] If $|\mathcal{F}||\mathcal{G}|\geq0.75(n-t)!^2$, then $(\mathcal{F},\mathcal{G})$ is trivial.
		\item[\rm(iii)] If $\mathcal{F}=\mathcal{G}$ and $|\mathcal{F}|\geq0.75(n-t)!$, then $\mathcal{F}$ is trivial.
	\end{itemize}
\end{theorem}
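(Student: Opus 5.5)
We plan to prove Theorem~\ref{thmKLMS} by a spread-approximation argument in the spirit of Kupavskii and Zakharov, followed by a cross-intersection bootstrapping step. We identify each permutation $\sigma\in S_n$ with the perfect matching $\{(i,\sigma(i)):i\in[n]\}$ in $K_{n,n}$. Then $\sigma,\tau$ agree on at least $t$ points exactly when the matchings share at least $t$ edges. A $t$-coset is then the family of permutations containing a fixed partial matching $T$ of size $t$.

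\textbf{Spread approximation.} The key fact is that $S_n$ is $O(1/n)$-spread: for any partial matching $A$ of size $a$, the number of permutations containing $A$ is $(n-a)!\le (C/n)^a n!$. Using this, we iteratively extract from $\mathcal{F}$ partial matchings $A$ of size at most $q=O(t)$ such that $\mathcal{F}(A)=\{\sigma\in\mathcal{F}:A\subseteq\sigma\}$ is $r$-spread relative to $S_n(A)$, with $r$ of order $n$ divided by a constant. We do the same for $\mathcal{G}$. This yields families $\mathcal{S}_\mathcal{F},\mathcal{S}_\mathcal{G}$ of partial matchings and small remainders $\mathcal{F}',\mathcal{G}'$ with $|\mathcal{F}'|\le (C t/n)^{q}\,n!$, which is much smaller than $(n-t)!$ once $n\ge c_0t$ and $q$ is a suitable multiple of $t$.

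\textbf{Cross-intersection of the approximations.} Using the spread lemma in its sunflower/random-restriction form, we show that every $A\in\mathcal{S}_\mathcal{F}$ and $B\in\mathcal{S}_\mathcal{G}$ share at least $t$ edges. The argument is by contradiction: if $|A\cap B|<t$, then spreadness lets us find $\sigma\supseteq A$ in $\mathcal{F}$ and $\tau\supseteq B$ in $\mathcal{G}$ that agree only on $A\cap B$. This is where the linear range enters. The disjointness estimate costs a factor like $(1-O(q/r))$, and this requires $r\gg q$, that is, $n\ge c_0 t$. Next we use a Frankl-type shifting or bootstrapping argument on the small families $\mathcal{S}_\mathcal{F},\mathcal{S}_\mathcal{G}$ of partial matchings. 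If the families are not both contained in the star of a single $t$-matching $T$, we bound the number of permutations they cover by roughly $\sum_{A}(n-|A|)!$. Each extra edge beyond $t$ costs a factor of $1/(n-t)$, while the number of choices is only $O(q)$ per extra edge. From this we obtain that the product $|\mathcal{F}||\mathcal{G}|$ is at most about $(O(t/n))\,(n-t)!^2$, which is at most $0.75(n-t)!^2$ for large $c_0$. In the trivial case, both approximations contain a common $T$. Then $\mathcal{F}\subseteq \mathcal{C}_T\cup\mathcal{F}'$, and the same holds for $\mathcal{G}$.

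\textbf{Conclusions.} Part (i) follows by a final cleaning step. If some $\sigma\in\mathcal{F}\setminus\mathcal{C}_T$ exists, then $\mathcal{G}\cap\mathcal{C}_T$ must meet $\sigma$ in $t$ edges. This confines $\mathcal{G}\cap\mathcal{C}_T$ to at most $t\cdot(n-t-1)!$-type quantities, so the product strictly drops below $(n-t)!^2$. Part (ii) follows directly from the dichotomy above: a product of at least $0.75(n-t)!^2$ forces a common $T$, and the cleaning step then forces triviality. Part (iii) is the special case $\mathcal{F}=\mathcal{G}$.

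The main obstacle is the second step. We must keep all losses linear in $t$, so that the spread parameter only needs $n\ge c_0t$ rather than $n\ge t\log n$. Our first attempt would be to use spreadness with respect to the uniform measure on $S_n(A)$ and sample random permutations restricted to avoid a fixed set of $O(t)$ edges. We would then control the failure probability by a union bound over the $q$ edges, with each term of order $1/r$.
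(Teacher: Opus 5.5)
\textbf{There is a genuine gap, and it is in your first step, not the second.} The Spread Lemma step you flag as the obstacle is essentially the paper's argument and works once the parameters are right. The real obstacle is the size of the spread-approximation remainder. It must be small compared with $(n-t)!$, not with $n!$, and $n!/(n-t)!\ge (n-t)^t$. At $n=c_0t$ and $q=ct$, your bound $(Ct/n)^q\,n!$ is at least $\bigl((C/c_0)^c(c_0-1)t\bigr)^t(n-t)!$, which is not small for large $t$. The reason is that $(C/c_0)^q$ is only exponentially small in $t$, while $n^t$ is superexponential.

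The same happens in the usual normalization. The remainder bound is $r^{q+1}(n-q-1)!=r^t\prod_{i=1}^{q-t+1}\frac{r}{n-q-1+i}\,(n-t)!$. Your cross-intersection step needs $r>q\ge t$, and then at $n=c_0t$ this is at least $(t/c_0^{c_0})^t(n-t)!$. So no choice of $q$ makes a direct spread approximation work in the linear range.

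The missing idea is the $t$-cover step of Theorem~\ref{thmspreadapp}:
\begin{itemize}
\item A fixed $G_0\in\mathcal G$ is a $t$-cover of $\mathcal F$, so $\mathcal F=\bigcup_{X\in\binom{G_0}{t}}\mathcal F[X]$.
\item You spread-approximate each link $\mathcal F(X)\subseteq S_n(X)\cong S_{n-t}$ separately by sets of size at most $q-t$, then lift back by $X$.
\item This replaces the fatal factor $r^t\approx (n/e)^t$ by $\binom nt$. The remainder is at most $\binom nt r^{q-t+1}(n-q-1)!<\binom nt e^{-(q-t+1)}(n-t)!$.
\item Hence $q=t+\lceil 3\ln\binom nt\rceil=O(t\ln(n/t))$ suffices, $n\ge 16q$ holds for $n\ge 400t$, and only then does the Spread Lemma argument give cross $t$-intersecting approximations (Theorem~\ref{thmcrossspreadapp}).
\end{itemize}

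\textbf{Your second step is also not justified.} A raw spread approximation gives no control on how many of its members have each size. The count ``$O(q)$ choices per extra edge'' needs the passage to fingerprints of minimal $t$-covers and the peeling procedure (Lemma~\ref{lemmafingerprintproperty}(iv),(v)). Even with that, bounding $|\mathcal F|$ and $|\mathcal G|$ separately is too weak in the linear range. On a layer of $(t+1)$-sets you get $|\mathcal F[\mathcal X_i]|\le 2(t+1)(n-t-1)!$. The best separate bound on the other side, using a member of $\mathcal X_i$ as a $t$-cover, is $|\mathcal G[\mathcal T_i]|\le (t+1)(n-t)!$. Together these give only $\frac{2(t+1)^2}{n-t}((n-t)!)^2$, which exceeds $0.75((n-t)!)^2$ when $n=c_0t$ and $t$ is large. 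The paper instead combines the product decomposition of Lemma~\ref{lemmadecomp} with Lemma~\ref{lemmatcoverkey-perm}, so that $\tau_t(\mathcal X_i)$ controls $|\mathcal F[\mathcal X_i]|$ and $\Delta_t(\mathcal G[\mathcal T_i])$ jointly. This yields $|\mathcal F||\mathcal G|<0.3((n-t)!)^2$ whenever the peeling stops early (Lemma~\ref{lemmagroupproduct}).

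\textbf{Your cleaning estimate is wrong, though the conclusion survives.} Take $\sigma\in\mathcal F\setminus\mathcal C_T$ with $|\sigma\cap T|=t-1$. It only forces members of $\mathcal G\cap\mathcal C_T$ to agree with $\sigma$ at one more point. That leaves up to $h(n,t)=(n-t)!-d_{n-t}-d_{n-t-1}\approx(1-1/e)(n-t)!$ permutations (Lemma~\ref{lemmaind}(ii)), not $O(t(n-t-1)!)$. This bound still suffices because $h(n,t)\le 0.66(n-t)!$. It is exactly how the paper obtains the statement with $c_0=400$, as Corollary~\ref{thmc0}: Theorem~\ref{thmpurelycombin}(i) gives $|\mathcal F||\mathcal G|\le h(n,t)((n-t)!+t)<0.75((n-t)!)^2$ for every non-trivial pair.
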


The method in \cite{KLMS} is based on hypercontractivity for global functions introduced by Keevash, Lifshitz, Long and Minzer \cite{Keevash-Lifshitz-Long-Minzer-2024}. This method has also led to progress on several extremal problems, including a solution to the forbidden cross-intersection problem for permutations \cite{KLS} in the essentially optimal range $n/\log n=\Omega(t)$.

For $\mathcal{F}\subseteq S_n$ and $\alpha,\beta\in\mathcal{S}_n$, we call $\alpha\mathcal{F}\beta:=\{\alpha\sigma\beta:\sigma\in\mathcal{F}\}$ a \emph{double translate} of $\mathcal{F}$. Two families  $\mathcal{F}_1,\mathcal{F}_2\subseteq\mathcal{S}_n$ are \emph{isomorphic}, denoted $\mathcal{F}_1\cong\mathcal{F}_2$, if they are the same up to double translate, that is, $\mathcal{F}_2=\alpha\mathcal{F}_1\beta$ for some $\alpha,\beta\in S_n$. Similarly, two pairs of families  $(\mathcal{F}_1,\mathcal{G}_1)$ and $(\mathcal{F}_2,\mathcal{G}_2)$ in $S_n$ are \emph{isomorphic} if $\alpha\mathcal{F}_1\beta=\mathcal{F}_2$ and $\alpha\mathcal{G}_1\beta=\mathcal{G}_2$ for some $\alpha,\beta\in S_n$. 

A central problem in this area is to determine the largest $t$-intersecting families in $S_n$ for all values of $n$ and $t$. Ellis, Friedgut and Pilpel \cite{Ellis-Friedgut-Pilpel-2011} conjectured an analogue of the complete intersection theorem of Ahlswede and Khachatrian. For $0\leq i\leq\lfloor(n-t)/2\rfloor$, define 
\[
\mathcal F_i(n,t)
=
\bigl\{\sigma\in S_n:
|\{j\in[t+2i]:\sigma(j)=j\}|\geq t+i
\bigr\}.
\]
These serve as natural analogues of the Frankl families \cite{Frankl-1976} for classical set systems. In particular, $\mathcal F_0(n,t)$ is a $t$-coset. The conjecture states that every $t$-intersecting family of maximum size is isomorphic to some $\mathcal F_i(n,t)$. Building on the spread approximation method, Kupavskii \cite{Kupavskii-2024} developed an iterative argument and proved that, for any fixed $\varepsilon>0$ and all sufficiently large $n$, the conjecture holds whenever $n>(1+\varepsilon)t$. Saengrungkongka \cite{Saengrungkongka} subsequently extended the range by further refining Kupavskii's techniques. In a recent landmark paper, Keller, Kupavskii, Lifshitz and Sheinfeld \cite{KKLS} established a complete intersection theorem for large permutation groups, that is, there exists a sufficiently large absolute constant $n_0$ such that the conjecture holds for all $n>n_0$ and every $1\leq t\leq n$. Their proof develops the spread approximation method into a general framework for $t$-intersection problems.

Let us turn to problems for non-trivial (cross) $t$-intersecting families. Define 
\begin{align*}
	\mathcal{H}(n,t):=\{\sigma\in S_n:
	\sigma(i)&=i\text{ for all }i\in[t],\\
	\sigma(j)&=j\text{ for some }j\in[t+1,n-1]\}\cup\{(i\,n):i\in[t]\}.
\end{align*}
Let $d_n:=n!\cdot\sum_{i=0}^n\frac{(-1)^i}{i!}$ be the $n$-th derangement number. By a routine counting argument,
$$|\mathcal{H}(n,t)|=(n-t)!-d_{n-t}-d_{n-t-1}+t.$$ In 2003, Cameron and Ku \cite{Cameron-Ku-2003} conjectured that $\mathcal H(n,1)$ is, up to isomorphism, the largest non-trivial $1$-intersecting family in $S_n$. The first step towards characterizing
the largest non-trivial $t$-intersecting families of
permutations was taken by Ellis \cite{Ellis-2012}, who proved this conjecture for all sufficiently large $n$ and thereby established a Hilton--Milner theorem for permutations. In \cite{Ellis-2011}, Ellis extended this result to all $t\geq1$, establishing a Hilton--Milner--Frankl theorem for permutations.
\begin{theorem}[\cite{Ellis-2011}]\label{thmsingleHMF}
For $n$ sufficiently large depending on $t$, if $\mathcal{F}\subseteq S_n$ is a non-trivial $t$-intersecting family, then $|\mathcal{F}|\leq(n-t)!-d_{n-t}-d_{n-t-1}+t$. Moreover, equality holds if and only if $\mathcal{F}\cong\mathcal{H}(n,t)$.
\end{theorem}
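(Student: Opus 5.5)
The plan is to reduce the theorem to a stability statement followed by an exact local optimization. First, I would show that a large non-trivial $t$-intersecting family is close to a $t$-coset. If $|\mathcal{F}|\geq(n-t)!-d_{n-t}-d_{n-t-1}+t$, which is $\approx(1-1/e)(n-t)!$ and so well above $0.75(n-t)!$ only for\dots{} actually $1-2/e\approx0.26$ times $(n-t)!$ after subtracting both derangement terms, Theorem~\ref{thmKLMS}(iii) does not apply directly. I would therefore instead invoke a junta/spread-approximation stability step. Namely, there is a $t$-coset $\mathcal{C}$, which after a double translate we take to be the pointwise stabilizer of $[t]$, with $|\mathcal{F}\cap\mathcal{C}|\geq(1-O(1/n))|\mathcal{F}|$. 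The reason is that a family far from every $t$-coset has size $O((n-t-1)!\cdot t)$, by counting via restrictions: each permutation outside $\mathcal{C}$ forces strong constraints on the rest. For $n$ large relative to $t$, this follows from a standard spread approximation. I would approximate $\mathcal{F}$ by a family of $t$-sets of ``fixed pairs'' $\{(i,\sigma(i))\}$ whose generated cosets cover almost all of $\mathcal{F}$, and then use $t$-intersection of the approximation to force a single common $t$-set of pairs.

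Second, I would do the exact computation. Write $\mathcal{F}=\mathcal{A}\cup\mathcal{B}$ with $\mathcal{A}=\mathcal{F}\cap\mathcal{C}$ and $\mathcal{B}=\mathcal{F}\setminus\mathcal{C}\neq\emptyset$ by non-triviality. Take $\tau\in\mathcal{B}$. Then $\tau$ fixes at most $t-1$ points of $[t]$, so say $\tau(i)\neq i$. Every $\sigma\in\mathcal{A}$ must agree with $\tau$ on at least $t$ points. For those $\sigma$, the agreements in $[t]$ number at most $t-1$, so $\sigma$ must agree with $\tau$ somewhere on $[t+1,n]$. The number of $\sigma\in\mathcal{C}$ failing this is the number of permutations of $[t+1,n]$ avoiding the partial matching given by $\tau$ on those points, counted by inclusion--exclusion. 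This count is minimized, i.e.\ $|\mathcal{A}|$ is maximized, when $\tau$ moves exactly one point of $[t]$, which gives $d_{n-t}+d_{n-t-1}$ missing elements. Moving more points gives larger deficits, of order $d_{n-t}+c\,d_{n-t-1}$ with $c\geq 2$ once $t+1$ coordinates are involved.

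Third, I would bound $|\mathcal{B}|$. Elements of $\mathcal{B}$ are pairwise $t$-intersecting, and each one costs $\mathcal{A}$ roughly $(n-t-1)!/e$ elements unless all of them forbid the same ``derangement-type'' set. The analysis then splits into two cases. If all $\tau\in\mathcal{B}$ share the same forbidden structure, i.e.\ they all map some fixed point $n$ into $[t]$ in the manner of $(i\,n)$, I would show that $|\mathcal{B}|\leq t$ with equality for the transpositions $(i\,n)$, recovering $\mathcal{H}(n,t)$. Otherwise, two elements of $\mathcal{B}$ with different structures remove an additional $\Omega((n-t-2)!)$ elements from $\mathcal{A}$. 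For $n$ large this outweighs any gain in $|\mathcal{B}|$, since $|\mathcal{B}|\leq$ the number of permutations $t$-intersecting a fixed set of elements is negligible by the first step.

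The main obstacle is the second half of this case analysis. One must show that the deficit in $\mathcal{A}$ grows strictly faster than $|\mathcal{B}|$, while handling elements of $\mathcal{B}$ that move several points of $[t]$, and also characterize equality. I would first attempt an injective charging argument: each $\tau\in\mathcal{B}$ beyond the $t$ canonical ones is charged to a set of $\geq (n-t-2)!/3$ permutations excluded from $\mathcal{A}$, with bounded overlap.
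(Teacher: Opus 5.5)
Your overall architecture is the same as the paper's, which obtains this statement for $n\geq 400t$ by setting $\mathcal F=\mathcal G$ in Theorem \ref{thmpurelycombin}(ii). You first place almost all of $\mathcal F$ in one $t$-coset $\mathcal C$. You then count the loss $d_{n-t}+d_{n-t-1}$ forced by one outside element, and identify the $t$ conjugates $(1\,i)\tau(1\,i)$; compare Case 2 of Lemma \ref{lemmafin-stru2} and Lemma \ref{lemmaind}.

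\textbf{The gap.} Your two halves do not fit together quantitatively, and that junction is where the proof actually lives.
\begin{itemize}
\item Step 1 only asserts $|\mathcal F\cap\mathcal C|\geq(1-O(1/n))|\mathcal F|$, i.e.\ $|\mathcal B|=O(|\mathcal F|/n)=O((n-t-1)!)$.
\item The extra loss you claim in Step 3 is $\Omega((n-t-2)!)$, smaller by a factor of order $n$. So ``this outweighs any gain in $|\mathcal B|$'' does not follow.
\item The charging argument with ``bounded overlap'' is asserted, not proved, and it is exactly the crux.
\end{itemize}
The true extra loss is in fact larger. Suppose $\tau=(1\,n)\in\mathcal B$, and $\tau'\in\mathcal B$ fixes $t-1$ points of $[t]$ but $\tau'(j)\neq j$ for some $j\in[t+1,n-1]$. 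Then every $\sigma\in\mathcal C$ that fixes $j$ and disagrees with $\tau'$ on $[t+1,n]\setminus\{j\}$ meets $\tau$ in $t$ points but $\tau'$ in only $t-1$. There are at least $d_{n-t-1}\approx(n-t-1)!/e$ such $\sigma$; this is Lemma \ref{lemmaind}(iii). Even so, this only gives $|\mathcal F|\leq h(n,t)-d_{n-t-1}+|\mathcal B|$. That beats $h(n,t)+t$ only if $|\mathcal B|<d_{n-t-1}+t$, and an $O((n-t-1)!)$ bound with an unspecified constant does not give this.

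\textbf{What is missing.} You need a stability statement whose error is $o((n-t-1)!)$, not a $(1-O(1/n))$ proportion. In the paper this comes directly from the spread-approximation remainder:
\begin{itemize}
\item Apart from cases where $|\mathcal F|=O(q\,(n-t-1)!)$ (Lemma \ref{lemmafin-stru0} and Case 1 of Lemma \ref{lemmafin-stru2}), the initial fingerprint is $\{X\}$.
\item Hence $\mathcal F\setminus\mathcal F[X]\subseteq\mathcal F\setminus\mathcal F[\mathcal S]$, which has size at most $R=\binom{n}{t}^{-2}(n-t)!<d_{n-t-1}/2$.
\item Consequently $|\mathcal F|\leq h(n,t)-d_{n-t-1}+R<h(n,t)$ unless $\mathcal F\setminus\mathcal F[X]\subseteq\{(i\,n):i\in[t]\}$.
\end{itemize}

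Your Step 1 also skips a reduction. Spread approximation produces sets of size up to $q$, not $t$-sets, and something must reduce these to a single $t$-set. In the paper that is the peeling procedure; for a single family with $n$ large, a fingerprint argument on the $t$-intersecting approximating family would do.

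\textbf{A minor slip.} $h(n,t)\approx(1-1/e)(n-t)!$, not $(1-2/e)(n-t)!$, because $d_{n-t-1}\approx(n-t-1)!/e$ is of lower order. Your conclusion that Theorem \ref{thmKLMS}(iii) does not apply is nevertheless correct.
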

In the same paper, he also established two fundamental cross-intersection results of permutations for sufficiently large $n$ \footnote{The proof in \cite{Ellis-2011} appears to give a bound on $n$ that is
	doubly exponential in $t$. The main aim of that work was to establish
	exact stability results for $t$-intersecting families of permutations.}. Before presenting them, we introduce two structures of non-trivial cross $t$-intersecting families.\begin{construction}\label{construction2}
	Define $(\mathcal{A}(n,t),\mathcal{B}(n,t))$ as follows. 
	\begin{align*}
		\mathcal{A}(n,t)&:=\{\sigma\in S_n:\sigma(i)=i\;\mbox{for all}\;i\in[t],\;\sigma(j)=j\;\mbox{for some}\;j\in[t+1,n-1]\}.\\
		\mathcal{B}(n,t)&:=\{\sigma\in S_n:\sigma(i)=i\;\mbox{for all}\;i\in[t]\}\cup\{(i\ n):i\in[t]\}.
	\end{align*}
\end{construction}
\begin{construction}\label{construction}
Let $\alpha,\beta\in S_n$ with $\alpha(1)\neq1$ and $\beta(1)\neq1$. Further, for $t=1$, suppose $\alpha(j)=\beta(j)$ for some $j\in[n]$; for $t\geq2$, suppose $\alpha(i)=\beta(i)=i$ for $2\leq i\leq t$, and $\alpha(j)=\beta(j)>t$ for at least two $j\in[t+1,n]$. Define
	\begin{align*}
		\mathcal H(\alpha,\beta):
		={}&\bigl\{\sigma\in S_n:
		\sigma(i)=i\text{ for all }i\in[t],\
		|\{j\in[n]:\sigma(j)=\beta(j)\}|\geq t\bigr\}\\
		&\cup\{(1\,i)\alpha(1\,i):i\in[t]\}.
	\end{align*}
	The pair $(\mathcal{H}(\alpha,\beta),\mathcal{H}(\beta,\alpha))$ is cross $t$-intersecting. In particular, $\mathcal{H}(n,t)=\mathcal H((1\,n),(1\,n))$. 
\end{construction}

\begin{theorem}[\cite{Ellis-2011}]\label{thmEllis-cross}
For $n$ sufficiently large depending on $t$, if $(\mathcal{F},\mathcal{G})$ is a non-trivial pair of cross $t$-intersecting families in $S_n$, then the following hold.
\begin{itemize}
\item[\rm(i)]$|\mathcal{F}||\mathcal{G}|\leq((n-t)!-d_{n-t}-d_{n-t-1})((n-t)!+t)$, and equality holds if and only if $(\mathcal{F},\mathcal{G})$ or $(\mathcal{G},\mathcal{F})$ is isomorphic to the pair given in Construction \ref{construction2}.
\item[\rm(ii)]$\min\{|\mathcal{F}|,|\mathcal{G}|\}\leq(n-t)!-d_{n-t}-d_{n-t-1}+t$, and equality holds if and only if $(\mathcal{F},\mathcal{G})$ is isomorphic to a pair given in Construction \ref{construction}.
\end{itemize}
\end{theorem}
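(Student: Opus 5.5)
The plan is to follow a stability-then-exact-analysis scheme, for $n$ large in terms of $t$.

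\emph{Step 1: a coarse structure theorem.} Suppose $(\mathcal{F},\mathcal{G})$ is non-trivial and cross $t$-intersecting, and assume for contradiction that it beats the bounds in (i) or (ii). In either case both families have size at least about $c(n-t)!$. For (i) this holds because $|\mathcal{F}|,|\mathcal{G}|\le (n-t)!$ up to lower-order terms, which follows from Theorem~\ref{thmKLMS}(i) together with a trivial lower bound on the other factor. For (ii) it is immediate. I would then use a junta/spread argument to find a single $t$-coset $\mathcal{C}=\{\sigma:\sigma(a_i)=b_i,\ i\in[t]\}$ such that all but an $O(1/n)$-fraction of $\mathcal{F}$ and of $\mathcal{G}$ lie in $\mathcal{C}$. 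Ellis's route is the Fourier/eigenvalue stability for the derangement-type graph; alternatively one can invoke Theorem~\ref{thmKLMS} in a slightly strengthened form. After a double translate we may take $\mathcal{C}=\{\sigma:\sigma(i)=i,\ i\in[t]\}$.

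\emph{Step 2: the outside elements force deletions inside.} Write $\mathcal{F}=\mathcal{F}_0\cup\mathcal{F}_1$ with $\mathcal{F}_0=\mathcal{F}\cap\mathcal{C}$, and similarly for $\mathcal{G}$. By non-triviality, at least one of $\mathcal{F}_1,\mathcal{G}_1$ is nonempty. Take $\pi\in\mathcal{F}_1$. It fixes at most $t-1$ of the points in $[t]$, so every $\tau\in\mathcal{G}_0$ must agree with $\pi$ in at least one further position among those $j>t$ where $\pi(j)$ is not in $[t]$. The key counting lemma is that the number of $\tau\in\mathcal{C}$ agreeing with $\pi$ in at least one such prescribed position, and in the required total, is at most $(n-t)!-d_{n-t}-d_{n-t-1}$. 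Equality holds exactly when $\pi$ is a transposition $(i\,n)$-type element, namely $\pi$ agrees with the identity on $[t]\setminus\{i\}$ and sends $i\mapsto n$, $n\mapsto i$. Any $\pi$ that fixes fewer points of $[t]$ or has a different shape gives a bound smaller by a factor $\Omega(1/n)\cdot(n-t)!$. I would compute this by inclusion--exclusion over derangements on $[t+1,n]$.

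Conversely, $|\mathcal{F}_1|$ is bounded by the size of the set of permutations outside $\mathcal{C}$ that $t$-intersect all of $\mathcal{G}_0$. When $\mathcal{G}_0$ is nearly all of $\mathcal{C}$ this is at most $t$, the extremal set being $\{(i\,n)\}$ after relabelling. Combining the two directions, the product is maximized when one family is full, $\mathcal{C}$ plus $t$ transpositions, and the other is $\mathcal{C}$ minus the permutations derangement-like off $[t]\cup\{n\}$. This gives (i) with Construction~\ref{construction2}. For (ii), both sides must carry outside elements, and the symmetric balancing gives $\mathcal{H}(\alpha,\beta)$.

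\emph{Step 3: the optimization.} With $x=|\mathcal{F}_1|$ and $y=|\mathcal{G}_1|$, Step 2 yields a bound of the form $|\mathcal{F}_0|\le (n-t)!-y$-dependent loss, where each outside element costs roughly $d_{n-t}+d_{n-t-1}\approx (n-t)!/e$. More precisely, the union of the forbidden sets for several outside elements grows unless they share the same extra point $n$; sharing is possible for at most $t$ such elements. A case analysis on $x,y$ then shows that the configurations with $x=0,y\le t$ (or symmetric) are optimal for the product, and that in the min-version both $x$ and $y$ must be positive, forcing the structure of Construction~\ref{construction}.

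I expect the main obstacle to be Step 2: obtaining the sharp bound with the exact equality characterization for the union of forbidden sets when several outside permutations of various shapes are present, especially for $t\ge2$. The plan there is first to show that any outside element not of transposition type costs at least $(1-o(1))(n-t)!/e$ plus an extra $\Omega((n-t-1)!)$. After that, only transposition-type elements need exact treatment, and they can be handled by explicit inclusion--exclusion.
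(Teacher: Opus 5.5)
Your endgame has the right shape (it is essentially Case 2 of Lemma \ref{lemmafin-stru2}), but the counting lemma in Step 2 is misstated, and the step you single out as the main obstacle would fail.

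Let $\mathcal C$ be centred at $X$ (normalized to fix $[t]$), let $|\pi\cap X|=t-1$, and let $k$ be the position with $\pi(k)\in[t]$. Then the number of $\tau\in\mathcal C$ agreeing with $\pi$ somewhere on $[t+1,n]\setminus\{k\}$ is exactly $h(n,t)=(n-t)!-d_{n-t}-d_{n-t-1}$, whatever the shape of $\pi$. This is Lemma \ref{lemmaind}(ii): any such $\pi$ is carried to $(1\ n)$ by a double translate preserving $\mathcal C$.

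For instance, let $\beta$ satisfy $\beta(1)=n$, $\beta(n)=n-1$, $\beta(n-1)=1$ and fix all other points. It costs exactly as much as $(1\ n)$, and $(\mathcal H((1\ n),\beta),\mathcal H(\beta,(1\ n)))$ attains equality in (ii). So a single outside element of ``non-transposition type'' incurs no extra $\Omega((n-t-1)!)$ cost.

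Among outside elements meeting $X$ in $t-1$ points, the extra loss comes only from \emph{two} elements $L,K$ of the same family with $L(j)\neq K(j)$ for some $j\in[t+1,n]\setminus\{k\}$. Such a pair cuts the inside part of the other family to $h(n,t)-d_{n-t-1}$ (Lemma \ref{lemmaind}(iii)). This gives the dichotomy of Claim \ref{claimlemmafin-stru21}: either an inside part loses $d_{n-t-1}$, or the outside part of the other family lies in $\{(1\ i)\tau(1\ i):i\in[t]\}$ for a fixed $\tau$. ``Sharing the extra point $n$'' is not the right rigidity condition.

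Second, the bookkeeping does not close with the remainder Step 1 provides.
\begin{itemize}
\item An $O(1/n)$-fraction outside $\mathcal C$ means $O((n-t-1)!)$ outside elements. That is the same order as the saving $d_{n-t-1}\approx(n-t-1)!/e$ on which your exclusion argument rests. In the first alternative of the dichotomy you need $h(n,t)-d_{n-t-1}+|\mathcal F\setminus\mathcal C|<h(n,t)+t$, and an unspecified constant does not give this.
\item Your bound ``at most $t$ when $\mathcal G_0$ is nearly all of $\mathcal C$'' does not help. In the relevant regime the inside part of the other family has size only about $(1-1/e)(n-t)!$.
\item You therefore need either a much smaller remainder or an extra bootstrapping step showing the outside parts are $o((n-t-1)!)$. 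The paper's spread approximation gives $R=\binom{n}{t}^{-2}(n-t)!<d_{n-t-1}/2$, which is exactly what Case 2 of Lemma \ref{lemmafin-stru2} uses.
\end{itemize}
Finally, the claim in Step 1 that for (i) both families have size at least $c(n-t)!$ does not follow from Theorem \ref{thmKLMS}(i). That result bounds only the product. An individual upper bound on one family requires a lower bound on the other, which is circular; a priori you only have $|\mathcal F|\leq\binom{n}{t}(n-t)!$. You need a stability theorem stated for products. The paper avoids the issue by bounding $|\mathcal F||\mathcal G|$ directly, through Lemmas \ref{lemmadecomp}, \ref{lemmatcoverkey-perm} and \ref{lemmagroupproduct} and Case 1 of Lemma \ref{lemmafin-stru2}.
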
	
\subsection{Main results}
Our first main result consisting of two intersection theorems in the linear range.

\begin{theorem}\label{thmpurelycombin}
Let $t\geq1$ and $n\geq400t$, and let $\mathcal{F},\mathcal{G}\subseteq S_n$ be cross $t$-intersecting families. Then the following hold.
\begin{itemize}
\item[\rm(i)]If $(\mathcal{F},\mathcal{G})$ is non-trivial, then $|\mathcal{F}||\mathcal{G}|\leq((n-t)!-d_{n-t}-d_{n-t-1})((n-t)!+t)$, and equality holds if and only if $(\mathcal{F},\mathcal{G})$ or $(\mathcal{G},\mathcal{F})$ is isomorphic to the pair given in Construction \ref{construction2}.
\item[\rm(ii)]If both $\mathcal{F}$ and $\mathcal{G}$ are non-trivial, then $|\mathcal{F}||\mathcal{G}|\leq((n-t)!-d_{n-t}-d_{n-t-1}+t)^2$, and equality holds if and only if $(\mathcal{F},\mathcal{G})$ is isomorphic to a pair given in Construction \ref{construction}.
\end{itemize}
\end{theorem}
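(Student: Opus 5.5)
The plan is to run a spread-approximation argument in the style of Kupavskii--Zakharov, and then a cross-intersection stability analysis that pins down the exact extremal structure.

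\textbf{Step 1: spread approximation.} Identify each permutation $\sigma$ with its graph $\{(i,\sigma(i))\}\subseteq[n]\times[n]$, so that agreement means sharing pairs of the graph. Apply the spread approximation to $\mathcal{F}$ and $\mathcal{G}$ with spreadness parameter of order $n/t$. This produces families $\mathcal{S}_\mathcal{F}$ and $\mathcal{S}_\mathcal{G}$ of partial permutations, each of size at most some $q=O(t)$. We get $\mathcal{F}\setminus\mathcal{F}'\subseteq\bigcup_{S\in\mathcal{S}_\mathcal{F}}S_n(S)$, where $S_n(S)$ is the set of permutations extending $S$, and $|\mathcal{F}'|$ is exponentially smaller than $(n-t)!$; the same holds for $\mathcal{G}$. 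Because the restrictions are spread, the standard argument shows that $\mathcal{S}_\mathcal{F}$ and $\mathcal{S}_\mathcal{G}$ are cross $t$-intersecting as partial permutations. Since $n\ge400t$, the family $S_n(S)$ has size at most $(n-|S|)!\le(n-t-1)!$ whenever $|S|\ge t+1$, and there are only boundedly many minimal sets contributing. From this we deduce that either both $\mathcal{F}$ and $\mathcal{G}$ have size $O((n-t-1)!\cdot t)$, which is far below both bounds because $(n-t)!/(n-t-1)!=n-t\ge399t$, or some $t$-set $T$ of pairs lies in both approximations. Multiply both families by suitable permutations on each side so that $T=\{(i,i):i\in[t]\}$, and let $\mathcal{C}$ be the corresponding $t$-coset.

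\textbf{Step 2: reduction to a structured count.} Write $\mathcal{F}=\mathcal{F}_0\cup\mathcal{F}_1$ with $\mathcal{F}_0=\mathcal{F}\cap\mathcal{C}$ and $\mathcal{F}_1=\mathcal{F}\setminus\mathcal{C}$, and split $\mathcal{G}$ the same way. Each $\tau\in\mathcal{G}_1$ misses some fixed point $i\in[t]$. It must still $t$-agree with every $\sigma\in\mathcal{F}_0$, which forces $\sigma$ to agree with $\tau$ on at least one point outside $[t]$ in the right way. A derangement-type count then gives
\[
|\mathcal{F}_0|\le (n-t)!-d_{n-t}-d_{n-t-1}
\]
as soon as $\mathcal{G}_1\neq\emptyset$. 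More generally, each additional "independent" element of $\mathcal{G}_1$ removes roughly $(n-t-1)!$ further elements from $\mathcal{F}_0$. This is the key lemma; I expect to prove it using the cross-intersection approach the authors developed earlier. Conversely, the elements of $\mathcal{F}_1$ that can coexist with a large $\mathcal{G}_0$ are limited: any $\sigma\in\mathcal{F}_1$ kills about a $1/e$-fraction of $\mathcal{C}$. So in (i), if both $\mathcal{F}_1$ and $\mathcal{G}_1$ are nonempty, both families lose order $(n-t)!/e$ and the product is too small. If only $\mathcal{G}_1$ is nonempty, the elements of $\mathcal{G}_1$ must each agree with $\mathcal{F}_0$ in a constrained way, which gives $|\mathcal{G}_1|\le t$; equality forces transpositions $(i\,n)$ up to isomorphism, matching Construction~\ref{construction2}. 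For (ii) both $\mathcal{F}_1$ and $\mathcal{G}_1$ are nonempty. The fix is to note that the lost fraction is $1/e$ only when the outside elements are arbitrary. The refined statement is that, with $\mathcal{F}_1$ and $\mathcal{G}_1$ nonempty, we have $|\mathcal{F}_0|\le(n-t)!-d_{n-t}-d_{n-t-1}$ and $|\mathcal{F}_1|\le t$, and symmetrically for $\mathcal{G}$. Equality analysis then forces $\mathcal{F}_1=\{(1\,i)\alpha(1\,i)\}$, giving Construction~\ref{construction}.

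\textbf{Main obstacle.} The hardest step is the exact-count lemma in the linear range $n\ge400t$. We must show that for $\tau\notin\mathcal{C}$, the number of $\sigma\in\mathcal{C}$ that $t$-agree with $\tau$ is at most $(n-t)!-d_{n-t}-d_{n-t-1}$, with equality only for transposition-like $\tau$. We must also handle several $\tau$ simultaneously, showing that the union of the constraints shrinks $\mathcal{F}_0$ quickly enough that the product cannot beat the constructions. I would first try inclusion--exclusion on the fixed points outside $[t]$, with the error terms controlled by $n\ge400t$. I would then argue that if $|\mathcal{G}_1|>t$ (respectively $|\mathcal{F}_1|>t$), then some pair of outside elements imposes two "independent" constraints, costing an extra $\Omega((n-t-1)!)$. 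This loss outweighs the gain of at most $|\mathcal{G}_1|\le O(t)(n-t-1)!/(n-t)!$ in relative terms, provided the constant $400$ is large enough.
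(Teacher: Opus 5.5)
Your outline matches the paper in broad shape: a spread approximation, then an exact derangement count inside a $t$-coset. However, the step connecting the two fails.

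\textbf{The gap is the dichotomy at the end of Step 1.} You assert that either both $\mathcal F$ and $\mathcal G$ have size $O(t(n-t-1)!)$, or the two approximations share a $t$-set. This is false. Take $\mathcal F$ to be the permutations fixing $[t+1]$ pointwise, and $\mathcal G$ the permutations fixing at least $t$ points of $[t+1]$ (the paper's optimality example, roles swapped).
\begin{itemize}
\item The pair is cross $t$-intersecting and non-trivial.
\item No $t$-set $S$ can lie in a spread approximation of $\mathcal F$: every member of $\mathcal F[S]$ contains a fixed pair $(i,i)\notin S$, so no link $\mathcal F_S(S)$ is spread.
\item Yet $|\mathcal G|=(t+1)(n-t)!-t(n-t-1)!$.
\end{itemize}
When the approximating pair does not collapse onto a common $t$-set, one can only show that the \emph{smaller} family is $O(t(n-t-1)!)$; this is Lemma~\ref{lemmafin-stru0}. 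The larger family can be of order $t(n-t)!$, and a priori is bounded only by $\binom qt(n-t)!$. Multiplying separate bounds then gives a ratio $|\mathcal F||\mathcal G|/((n-t)!)^2$ of order $t^2/n$. That is not small when $n=\Theta(t)$ and $t$ is large. So in exactly the branch you dismiss as ``far below both bounds'', you have no product bound at all. This is where the linear range is hard; the paper notes that separate estimates only work for $n/\ln n=\Omega(t^2)$.

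\textbf{How the paper fills this branch.} It runs the peeling procedure on $(\mathcal S,\mathcal T)$. It then covers $\mathcal F[\mathcal S]\times\mathcal G[\mathcal T]$ by the products $\mathcal F[\mathcal X_i]\times\mathcal G[\mathcal T_i]$ and $\mathcal F[\mathcal S_i]\times\mathcal G[\mathcal Y_i]$ (Lemma~\ref{lemmadecomp}), and bounds each product jointly. The bound on $|\mathcal F[\mathcal X_i]|$ grows like $\binom{\tau_t(\mathcal X_i)}{t}$. But $\mathcal X_i$ is a family of small $t$-covers of $\mathcal G[\mathcal T_i]$, so Lemma~\ref{lemmatcoverkey-perm} makes $\Delta_t(\mathcal G[\mathcal T_i])$ decay like $(j+1)^{\tau_t(\mathcal X_i)-t}(n-\tau_t(\mathcal X_i))!$. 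The two effects combine to give a total below $0.2((n-t)!)^2$. Nothing in your proposal plays this role.

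Your phrase ``only boundedly many minimal sets contribute'' is also unsupported. The number of approximating sets is controlled only through the fingerprint property: each member is a minimal $t$-cover of the opposite family. That gives the layer bounds of Lemma~\ref{lemmafingerprintproperty}(iv),(v), not a constant.

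\textbf{Two smaller points.}
\begin{itemize}
\item Your parameters do not fit together. Making the approximations cross $t$-intersecting requires $r\gg q\ge t$, because one deletes from a spread link the members meeting a set of size up to $q$. So spreadness of order $n/t$ fails for $n=\Theta(t)$ with $t$ large. With spreadness of order $n$, the remainder is small only if $q-t\gtrsim\ln\binom nt$, which is not $O(t)$ in general. The paper takes $r=(n-q-1)/e$ and $q=t+\lceil3\ln\binom nt\rceil$. It also first splits into $t$-links along a $t$-cover, so that $n\ge16q$ still holds at $n\ge400t$.
\item In Step 2, the statement $|\mathcal F_1|\le t$ is not what holds. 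The correct dichotomy (Claim~\ref{claimlemmafin-stru21}) is: either the opposite family loses a further $d_{n-t-1}>2R$ inside the coset, or the outside part lies in a fixed set of $t$ permutations. With that correction, your Step 2 is the paper's Case~2 of Lemma~\ref{lemmafin-stru2}.
\end{itemize}
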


As consequences of Theorem \ref{thmpurelycombin}, we obtain an explicit value
for the constant $c_0$ in Theorem \ref{thmKLMS}, and obtain a  Hilton--Milner--Frankl theorem of permutations for $n=\Omega(t)$ by taking $\mathcal F=\mathcal G$ in
Theorem \ref{thmpurelycombin} (ii).
\begin{corollary}\label{thmc0}
	The constant $c_0$ in Theorem \ref{thmKLMS} can be taken to be $400$.
\end{corollary}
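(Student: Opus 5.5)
Corollary \ref{thmc0} follows from Theorem \ref{thmpurelycombin}(i) together with a comparison of explicit quantities. Let $t\geq1$, $n\geq400t$, and let $\mathcal{F},\mathcal{G}\subseteq S_n$ be cross $t$-intersecting. We verify (i), (ii) and (iii) of Theorem \ref{thmKLMS} in turn.

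For (ii), argue by contrapositive. Suppose $(\mathcal{F},\mathcal{G})$ is non-trivial. Then Theorem \ref{thmpurelycombin}(i) gives
\[
|\mathcal{F}||\mathcal{G}|\leq\bigl((n-t)!-d_{n-t}-d_{n-t-1}\bigr)\bigl((n-t)!+t\bigr).
\]
Set $m=n-t\geq 399t\geq399$. From the alternating-series expression for $d_m$ we have $d_m\geq m!/e-1$, and similarly $d_{m-1}\geq (m-1)!/e-1$. Hence
\[
(n-t)!-d_{n-t}-d_{n-t-1}\leq m!\Bigl(1-\tfrac1e-\tfrac{1}{em}\Bigr)+2\leq 0.64\,m!.
\]
Also $(n-t)!+t\leq m!(1+t/m!)\leq 1.001\,m!$. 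The product is therefore below $0.65\,(n-t)!^2<0.75\,(n-t)!^2$. So any pair with $|\mathcal{F}||\mathcal{G}|\geq0.75(n-t)!^2$ must be trivial.

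For (i), there are two cases.
\begin{itemize}
\item If $(\mathcal{F},\mathcal{G})$ is non-trivial, the bound just derived gives $|\mathcal{F}||\mathcal{G}|<(n-t)!^2$.
\item If $(\mathcal{F},\mathcal{G})$ is trivial, both families lie in a single $t$-coset, which has size $(n-t)!$, so $|\mathcal{F}||\mathcal{G}|\leq(n-t)!^2$.
\end{itemize}
For (iii), take $\mathcal{F}=\mathcal{G}$. The hypothesis $|\mathcal{F}|\geq0.75(n-t)!$ gives $|\mathcal{F}|^2\geq0.5625\,(n-t)!^2$. This is not quite $0.75\,(n-t)!^2$, so we cannot quote (ii) directly. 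Instead we use the sharper estimate $0.65\,(n-t)!^2$ from the non-trivial case: since $0.5625<0.65$, we need a different comparison. Suppose $\mathcal{F}$ is non-trivial; then $(\mathcal{F},\mathcal{F})$ is a non-trivial pair. Theorem \ref{thmpurelycombin}(ii) gives
\[
|\mathcal{F}|\leq(n-t)!-d_{n-t}-d_{n-t-1}+t\leq0.64\,m!+t<0.75\,(n-t)!,
\]
a contradiction. Hence $\mathcal{F}$ is trivial.

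The only real work is the numerical estimate on derangement numbers, namely $1-1/e<0.633$ plus negligible error terms for $m\geq399$. There is no conceptual obstacle beyond invoking Theorem \ref{thmpurelycombin}.
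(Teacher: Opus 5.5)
Your argument is correct and is the derivation the paper intends, although the paper only says the corollary follows from Theorem \ref{thmpurelycombin} and writes no explicit proof. Part (i) of Theorem \ref{thmpurelycombin}, with $h(n,t)\le 0.66(n-t)!$ (your estimate $d_m\ge m!/e-1$ does the job of Lemma \ref{lemmadrgmt}(ii)), yields (i) and (ii) of Theorem \ref{thmKLMS}; for (iii) you rightly switch to part (ii) of Theorem \ref{thmpurelycombin}, since $\sqrt{0.65}>0.75$ means the product bound would not suffice, and the contradictory sentence about ``using the sharper estimate $0.65\,(n-t)!^2$'' should simply be deleted.
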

\begin{corollary}
Theorem \ref{thmsingleHMF} holds for all $t\geq1$ and $n\geq400t$.
\end{corollary}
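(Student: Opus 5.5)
The statement is the corollary that Theorem \ref{thmsingleHMF} holds for all $t\geq1$ and $n\geq400t$. I would derive it directly from Theorem \ref{thmpurelycombin}(ii) by taking $\mathcal{G}=\mathcal{F}$.

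\textbf{The bound.} Let $\mathcal{F}\subseteq S_n$ be a non-trivial $t$-intersecting family with $n\geq 400t$. Since $\mathcal{F}$ is $t$-intersecting, every $\sigma,\tau\in\mathcal{F}$ agree on at least $t$ points. This includes the case $\sigma=\tau$, which holds trivially because $t\leq n$. So $(\mathcal{F},\mathcal{F})$ is a cross $t$-intersecting pair in which both families are non-trivial. Theorem \ref{thmpurelycombin}(ii) then gives
\[
|\mathcal{F}|^2\leq\bigl((n-t)!-d_{n-t}-d_{n-t-1}+t\bigr)^2 .
\]
Taking square roots yields the claimed upper bound on $|\mathcal{F}|$.

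\textbf{The `if' direction of equality.} We need $\mathcal{H}(n,t)$ and its isomorphic copies to be non-trivial, $t$-intersecting, and of the stated size. The size is the routine count already recorded in the paper. Since $\mathcal{H}(n,t)=\mathcal{H}((1\,n),(1\,n))$, Construction \ref{construction} with $\alpha=\beta=(1\,n)$ shows the pair $(\mathcal{H}(n,t),\mathcal{H}(n,t))$ is cross $t$-intersecting, so $\mathcal{H}(n,t)$ is $t$-intersecting. It is non-trivial because it contains the identity together with the transpositions $(i\,n)$ for $i\in[t]$. No $t$-coset can contain all of these: a $t$-coset containing the identity must fix $t$ points pointwise, and the only common fixed points of these permutations lie in $[t+1,n-1]$. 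For $n>2t$ that set cannot simultaneously be fixed by the permutations in $\mathcal{H}(n,t)$ that move it, so no $t$-coset works. Double translates preserve the number of agreements, the family size, and the set of $t$-cosets, so every family isomorphic to $\mathcal{H}(n,t)$ also qualifies.

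\textbf{The `only if' direction of equality.} This is the only step needing care. If equality holds, then $(\mathcal{F},\mathcal{F})$ attains equality in Theorem \ref{thmpurelycombin}(ii). Hence $(\mathcal{F},\mathcal{F})\cong(\mathcal{H}(\alpha,\beta),\mathcal{H}(\beta,\alpha))$ for some admissible $\alpha,\beta$, so $\mathcal{H}(\alpha,\beta)=\mathcal{H}(\beta,\alpha)$. I must show this forces $\mathcal{H}(\alpha,\beta)\cong\mathcal{H}(n,t)$. Compare the exceptional parts, i.e.\ the members not fixing $1$. In $\mathcal{H}(\alpha,\beta)$ these are exactly $(1\,i)\alpha(1\,i)$ for $i\in[t]$, and in $\mathcal{H}(\beta,\alpha)$ they are exactly $(1\,i)\beta(1\,i)$. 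Taking $i=1$ gives $\alpha=\beta$.

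Now apply the cross $t$-intersecting condition to $\alpha\in\mathcal{H}(\alpha,\alpha)$. It must agree on at least $t$ points with every $\sigma$ in the main part, i.e.\ every $\sigma$ fixing $[t]$ with at least $t$ agreements with $\alpha$. Since $\alpha(1)\neq1$, this should force $\alpha$ to be essentially a transposition $(1\,m)$ composed with a permutation that a double translate fixing $[t]$ can normalise. Conjugating by a suitable permutation fixing $[t]$ then gives $\alpha=(1\,n)$, so $\mathcal{F}\cong\mathcal{H}(n,t)$.

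I expect this last identification to be the only real obstacle. The rest is a direct specialisation of Theorem \ref{thmpurelycombin}(ii). If the direct argument above is awkward, the fallback is to count $|\mathcal{H}(\alpha,\alpha)|$ and check that it is maximal only when $\alpha$ is a transposition.
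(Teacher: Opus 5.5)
Your derivation of the bound is exactly the paper's one-line argument: take $\mathcal G=\mathcal F$ in Theorem~\ref{thmpurelycombin}(ii). Your reduction of the equality case to $\mathcal H(\alpha,\beta)=\mathcal H(\beta,\alpha)$, and hence to $\alpha=\beta$, is also fine. There is one slip: the only member of $\mathcal H(\alpha,\beta)$ not fixing $1$ is $\alpha$ itself, because for $i\neq1$ the permutation $(1\,i)\alpha(1\,i)$ fixes $1$ and moves $i$. Either comparison still gives $\alpha=\beta$.

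The gap is the final identification $\mathcal H(\alpha,\alpha)\cong\mathcal H(n,t)$. When $\alpha=\beta$, the $t$-intersecting property gives no information on $\alpha$. The admissibility condition of Construction~\ref{construction} holds automatically for every $\alpha$ fixing $2,\dots,t$ with $\alpha(1)\neq1$, since such an $\alpha$ has $\alpha(j)>t$ for exactly $n-t-1\geq2$ indices $j\in[t+1,n]$. So every such $\mathcal H(\alpha,\alpha)$ is $t$-intersecting and has size exactly $h(n,t)+t$; for instance, $\alpha$ could be the $3$-cycle $1\mapsto n\mapsto n-1\mapsto 1$. This breaks all three parts of your last step:
\begin{itemize}
\item $\alpha$ is not forced to be a transposition.
\item Conjugation preserves cycle type, so it cannot carry such an $\alpha$ to $(1\,n)$.
\item Your counting fallback cannot single out transpositions, because all these families have the same size.
\end{itemize}

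The missing idea is a genuinely two-sided translation by elements of the pointwise stabilizer of $[t]$. Suppose $\rho,\pi$ fix every point of $[t]$. Then they commute with each $(1\,i)$, $i\in[t]$. The map $\sigma\mapsto\rho\sigma\pi$ preserves the property of fixing $[t]$ and all agreement counts. Hence $\rho\,\mathcal H(\alpha,\alpha)\,\pi=\mathcal H(\rho\alpha\pi,\rho\alpha\pi)$.

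Such $\rho,\pi$ with $\rho\alpha\pi=(1\,n)$ always exist. Put $k=\alpha^{-1}(1)\in[t+1,n]$ and take any $\pi$ fixing $[t]$ with $\pi(n)=k$. Define $\rho$ to fix $[t]$, to send $\alpha(1)$ to $n$, and to send $\alpha\pi(j)$ to $j$ for $j\in[t+1,n-1]$. This is a bijection because $\alpha\pi([t+1,n-1])=[t+1,n]\setminus\{\alpha(1)\}$. Therefore $\mathcal F\cong\mathcal H(\alpha,\alpha)\cong\mathcal H((1\,n),(1\,n))=\mathcal H(n,t)$.

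This is the same normalization, ``$L=(1\ n)$ by double translate'', used in Case 2.2 of the proof of Lemma~\ref{lemmafin-stru2}. With $\mathcal F=\mathcal G$, the extremal pair $(\mathcal H((1\,n),\tau),\mathcal H(\tau,(1\,n)))$ obtained there forces $\tau=(1\,n)$ directly.
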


We note that Theorem \ref{thmpurelycombin} (i) establishes a sharp stability result for the product version of the Erd\H{o}s--Ko--Rado theorem for
permutations (Theorem \ref{thmKLMS} (i)), sharpening
Theorem \ref{thmKLMS} (ii)\footnote{The constant $0.75$ in
	Theorem \ref{thmKLMS} can be replaced by any fixed constant
	greater than $1-1/e$, with a corresponding adjustment of $c_0$.}.
Theorem \ref{thmpurelycombin} (ii) establishes a product version
of the Hilton--Milner--Frankl theorem for permutations and implies
Theorem \ref{thmEllis-cross} (ii). Indeed, it holds directly when $\mathcal{F}$ and $\mathcal{G}$ are non-trivial. If one of the families is trivial, then Lemma \ref{lemmaind} (ii) gives a strict inequality on  $\min\{|\mathcal{F}|,|\mathcal{G}|\}$. 

Let us note that the linear dependence on $t$ in these results are essentially optimal. Indeed, fix $1<c<1+(1-e^{-1})^{-1}$ and let $n=\lfloor ct\rfloor$. Consider the family $\mathcal F_1(n,t)$. We have
\[
\begin{aligned}
	|\mathcal F_1(n,t)|
	&=(t+2)(n-t-1)!-(t+1)(n-t-2)!\\
	&=\left(\frac{1}{c-1}+o(1)\right)(n-t)!>(n-t)!-d_{n-t}-d_{n-t-1}+t
\end{aligned}
\]
for all sufficiently large $t$, because 
$d_{n-t}+d_{n-t-1}=(e^{-1}+o(1))(n-t)!$ as $t\to\infty$. So Theorem \ref{thmsingleHMF} fails for $n=\lfloor ct\rfloor$ and all sufficiently large $t$. Similarly, taking
$\mathcal F=\mathcal G=\mathcal F_1(n,t)$ violates Theorem \ref{thmpurelycombin} {\rm(ii)}. For assertion {\rm(i)}, take $\mathcal F$ to consist of all
permutations fixing at least $t$ points of $[t+1]$, and
$\mathcal G$ to consist of all permutations fixing every
point of $[t+1]$. These families form a non-trivial pair of cross
$t$-intersecting families, and
\[
\begin{aligned}
	|\mathcal F||\mathcal G|
	&=\bigl((t+1)(n-t)!-t(n-t-1)!\bigr)(n-t-1)!\\
	&=\left(\frac{1}{c-1}+o(1)\right)((n-t)!)^2>\bigl((n-t)!-d_{n-t}-d_{n-t-1}\bigr)
	\bigl((n-t)!+t\bigr)
\end{aligned}
\]
for all sufficiently large $t$.

Our next main result is a stability theorem. Before presenting it, let us introduce some notation. For a family $\mathcal{F}$ of subsets of $[n]$, its \emph{maximum $s$-degree}, denoted $\Delta_s(\mathcal{F})$, is defined to be the maximum number of its members that contain a fixed $s$-subset. Formally,
\begin{equation}\label{equdefDelta}
	\Delta_s(\mathcal{F}):=\max_{S\in\binom{[n]}{s}}|\left\{F\in\mathcal{F}:S\subseteq F\right\}|.
\end{equation}
Its \emph{$t$-diversity} is defined as
\begin{equation*}
	\gamma_t(\mathcal{F}):=|\mathcal{F}|-\Delta_t(\mathcal{F}).
\end{equation*}
We view a permutation $\sigma\in S_n$ as the set
\begin{equation}\label{equsetinter}
\{(i,\sigma(i)):i\in[n]\}\subseteq[n]^2.
\end{equation}
Then $S_n$ can be regarded as a subfamily of $\binom{[n]^2}{n}$, and two families $\mathcal{F},\mathcal{G}\subseteq S_n$ are cross
$t$-intersecting if $|\sigma\cap\pi|\geq t$ for every
$\sigma\in\mathcal{F}$ and $\pi\in\mathcal{G}$. A subset of a permutation is called a \emph{partial permutation}. In this sense, we can define the $t$-diversity of a family of permutations. This parameter measures the distance from a family to its closest $t$-coset. More precisely, every $\mathcal{F}\subseteq S_n$ can be made a trivial family by deleting $\gamma_t(\mathcal{F})$ members. 

The key ingredient in Ellis's work \cite{Ellis-2011} is a stability result showing that, for sufficiently large $n$, every $t$-intersecting family of size at least a fixed positive proportion of $(n-t)!$ is almost contained in a $t$-coset; in terms of $t$-diversity, its $t$-diversity is $O((n-t-1)!)$. He further proposed the following sharp form of this result \cite{Ellis-2011,Ellis-2012}.
\begin{conjecture}[\cite{Ellis-2011}]\label{conjEllis}
	For $n$ sufficiently large depending on $t$, if $\mathcal{F}\subseteq S_n$ is $t$-intersecting, then there is a $t$-coset containing all but at most $t((n-t-1)!-(n-t-2)!)$ members of $\mathcal{F}$.
\end{conjecture}

Note that the upper bound is tight, as a direct counting argument gives 
$$
\gamma_t(\mathcal F_1(n,t))
=t\bigl((n-t-1)!-(n-t-2)!\bigr).$$
We remark that the case $t=1$ was proved by Wang and Xiao
\cite{Wang-Xiao} for $n\geq500$, although it was not mentioned there. Our next main result settles all remaining cases $t\geq2$ in a stronger form. 
\begin{theorem}\label{thmcrossdiv}
	Let $t\geq2$ and $n\geq 400t$. If $\mathcal{F},\mathcal{G}\subseteq S_n$ are cross $t$-intersecting, then 
	$$
	\min\{\gamma_t(\mathcal{F}),\gamma_t(\mathcal{G})\}\leq t((n-t-1)!-(n-t-2)!). 
	$$
	If equality holds, then $\mathcal{F},\mathcal{G}\subseteq\mathcal{H}$ for some $\mathcal{H}\cong\mathcal{F}_1(n,t)$.
\end{theorem}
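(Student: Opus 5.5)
We plan to derive Theorem \ref{thmcrossdiv} from a structural description of cross $t$-intersecting pairs, combining the spread approximation method with the cross $t$-intersection approach mentioned in the abstract. The argument splits according to the size of the smaller family.

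\emph{Step 1: small families.} Suppose first that one family, say $\mathcal{F}$, is small: $|\mathcal{F}|\leq t((n-t-1)!-(n-t-2)!)$ plus the size of its largest $t$-coset intersection. Then $\gamma_t(\mathcal{F})$ is already within the bound and there is nothing to prove. So we assume both families are large, in the sense that both $\gamma_t$ values exceed $t((n-t-1)!-(n-t-2)!)$. In particular both families have size at least of order $t(n-t-1)!$.

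\emph{Step 2: spread approximation.} Apply spread approximation to both families viewed inside $\binom{[n]^2}{n}$. Fix a spreadness parameter $r$ of order $n/t$; the requirement $n\geq400t$ is what makes this choice possible. We obtain families $\mathcal{S}_\mathcal{F}$ and $\mathcal{S}_\mathcal{G}$ of partial permutations of size at most $q=O(t)$ with three properties:
\begin{itemize}
\item all but a remainder of size $O((n-t-1)!\,n^{-1})$ of $\mathcal{F}$ is covered by the cosets $\{\sigma\supseteq S\}$ with $S\in\mathcal{S}_\mathcal{F}$, and similarly for $\mathcal{G}$;
\item $\mathcal{S}_\mathcal{F}$ and $\mathcal{S}_\mathcal{G}$ are cross $t$-intersecting as sets;
\item the sets of size exactly $t$ play a special role.
\end{itemize}
The key structural claim is that, unless both $\mathcal{S}_\mathcal{F}$ and $\mathcal{S}_\mathcal{G}$ equal the same single $t$-set $T$, the approximations are governed by a small configuration. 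In that configuration the sets of size $t+1$ all lie in a common $(t+2)$-element partial permutation, matching the shape of $\mathcal{F}_1(n,t)$ up to isomorphism.

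\emph{Step 3: degree counting.} If both approximations are the single $t$-set $T$, write $\mathcal{C}$ for the $t$-coset determined by $T$. Every $\sigma\in\mathcal{F}\setminus\mathcal{C}$ must $t$-intersect all of $\mathcal{G}\cap\mathcal{C}$, which is nearly all of $\mathcal{C}$. Since $\sigma$ misses $T$ in at least one coordinate, it must agree with most elements of the large family $\mathcal{G}\cap\mathcal{C}$ on additional points. A counting argument using derangement-type estimates shows this forces $|\mathcal{F}\setminus\mathcal{C}|$, and symmetrically $|\mathcal{G}\setminus\mathcal{C}|$, to be $O(t(n-t-2)!)$. This is far below the bound, which contradicts Step 1's assumption.

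In the $\mathcal{F}_1$-like configuration we bound the diversities directly. The part of $\mathcal{F}$ outside the best $t$-coset is contained in the union of at most $t$ cosets of $(t+1)$-sets. After inclusion–exclusion this union has size at most $t((n-t-1)!-(n-t-2)!)$, with equality exactly when $\mathcal{F},\mathcal{G}\subseteq\mathcal{H}\cong\mathcal{F}_1(n,t)$.

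\emph{Step 4: the main obstacle.} The hardest part is the intermediate regime, where the spread approximation contains some $t$-sets together with several $(t+1)$-sets that do not fit the $\mathcal{F}_1$ pattern. This includes the case where the two approximations are built around different $t$-sets. Here we must show that cross $t$-intersection forces one family's diversity to collapse, and the difficulty is to do so without losing a constant factor, because the target bound is exact. We would first try a bootstrapping argument. Fix the heaviest $t$-set $T$ of $\mathcal{F}$ and study the links of $\mathcal{F}\setminus\mathcal{C}_T$ and of $\mathcal{G}$ over the points outside $T$. Then apply Lemma \ref{lemmaind}-type inequalities to the restricted cross-intersecting pairs, with the spread estimates absorbing the error terms. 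This comparison should reduce the problem to the extremal $\mathcal{F}_1$ structure. Finally, for the equality case we track which inequalities are tight to confirm that both families lie in a common copy of $\mathcal{F}_1(n,t)$.
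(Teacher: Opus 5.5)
\textbf{Verdict: genuine gap.} Your outline has the right ingredients: a spread approximation with remainder below $n^{-1}(n-t-1)!$, and the $(t+2)$-set $Z$ underlying $\mathcal{F}_1(n,t)$. But the two steps that carry the proof are missing.

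\textbf{First gap: the structural claim in Step 2.} The ``key structural claim'' has no mechanism behind it, and the dichotomy it asserts is incomplete. A spread approximation is just a family of sets of assorted sizes up to $q$. Note also that $q=t+\lceil3\ln\binom nt\rceil$ is of order $t\ln(n/t)$, not $O(t)$. Likewise $r$ must be of order $n$: with $r$ of order $n/t$, one has $r<q$ at $n=400t$ for large $t$, and the Spread Lemma argument no longer shows that the two approximations are cross $t$-intersecting.

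The paper reaches a rigid configuration by the peeling procedure for a pair (iterated fingerprints). It combines this with the estimate that all non-terminal layers together cover at most $0.1t(n-t-1)!$ members of each family. Then $N\le q-t-2$ forces one family to be tiny, and $N=q-t$ leaves a single common $t$-set, so only $N=q-t-1$ needs work. Even there, with both terminal layers $(t+1)$-uniform, they need not lie in any $\binom{Z}{t+1}$. Writing $Iab:=I\cup\{a,b\}$ with $|I|=t-1$, the pair $(\{Ixy,Izw,Ixw\},\{Ixz,Iyw,Ixw\})$ is a valid fingerprint of this kind. You need the classification of $(t+1)$-uniform fingerprints (Lemma \ref{lemmastrue-alike}) to see that one side then has a $t$-set contained in all but at most one member. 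Your Step 4 names this regime as the main obstacle and only proposes to ``try a bootstrapping argument'', so this part of the proof is absent.

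\textbf{Second gap: no exact bound in the $\mathcal{F}_1$-like case.} Your Step 3 claims that $\mathcal{F}$ minus its best $t$-coset lies in at most $t$ cosets of $(t+1)$-sets. This ignores members of $\mathcal{F}$ covered only by higher layers or lying in the remainder. These can amount to about $0.1t(n-t-1)!+R$, the same order as the target. Adding them to the full count $t((n-t-1)!-(n-t-2)!)$ overshoots, and the equality statement does not follow.

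The missing idea is a dichotomy on the original families rather than on the approximations:
\begin{itemize}
\item Either $\mathcal{F},\mathcal{G}\subseteq\mathcal{F}_1(Z)$. Then monotonicity of $\gamma_t$ gives both the bound and the equality structure at once.
\item Or, say, some $L\in\mathcal{F}\setminus\mathcal{F}_1(Z)$ exists. Choose $X\in\binom Zt$ with $L\cap Z\subseteq X$. Each of the at most $t$ sets $T\in\mathcal{T}_N$ with $X\not\subseteq T$ satisfies $|L\cap T|<t$. So every member of $\mathcal{G}[T]$ contains an assignment of $L\setminus T$. A derangement count gives $|\mathcal{G}[T]|\le(n-t-1)!-d_{n-t-1}\le0.67(n-t-1)!$, hence $\gamma_t(\mathcal{G})\le0.77t(n-t-1)!+R$, strictly below the target.
\end{itemize}
This constant-factor loss per coset is exactly the slack that absorbs the error terms. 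Without it, or an equivalent device, your Steps 3--4 establish neither the inequality nor the equality case.
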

For a single family, the argument also applies when \(t=1\), giving the following result.
\begin{theorem}\label{thmdiv}
	Conjecture \ref{conjEllis} is true whenever $t\geq1$ and $n\geq400t$.  Moreover, the bound $t((n-t-1)!-(n-t-2)!)$ is sharp only when  $\mathcal{F}$ is isomorphic to a subfamily of $\mathcal{F}_1(n,t)$.
\end{theorem}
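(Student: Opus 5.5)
We prove Theorem \ref{thmdiv} by specializing the cross-intersecting machinery to $\mathcal{F}=\mathcal{G}$ and treating $t=1$ separately where Theorem \ref{thmcrossdiv} does not apply.

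\textbf{The case $t\geq2$.} Apply Theorem \ref{thmcrossdiv} with $\mathcal{G}=\mathcal{F}$. This gives directly $\gamma_t(\mathcal{F})\leq t((n-t-1)!-(n-t-2)!)$. If equality holds, then $\mathcal{F}\subseteq\mathcal{H}$ for some $\mathcal{H}\cong\mathcal{F}_1(n,t)$, which is exactly the characterization claimed. So for $t\geq2$ there is nothing further to do, and all the work lies in $t=1$.

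\textbf{The case $t=1$: reduction to the large case.} Let $\mathcal{F}$ be $1$-intersecting and suppose $\gamma_1(\mathcal{F})\geq (n-2)!-(n-3)!$. The plan is to rerun the argument behind Theorem \ref{thmcrossdiv}, namely spread approximation together with our cross-intersection approach, in the symmetric setting.

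First, if $|\mathcal{F}|$ is small, say $|\mathcal{F}|\leq (1-1/e+\varepsilon)(n-1)!$, we need a separate diversity bound. For this we take a spread approximation of $\mathcal{F}$ by a family $\mathcal{S}$ of partial permutations of size at most $q=O(1)$. Every member of $\mathcal{S}$ must be $1$-intersecting with the others; otherwise spreadness produces a pair of disjoint permutations. If $\mathcal{S}$ contains no singleton $\{(i,j)\}$ through which most of $\mathcal{F}$ passes, then $\mathcal{S}$ is a non-trivial intersecting family of partial permutations of size at least $2$. This forces $|\mathcal{F}|\leq O((n-2)!)$ plus a remainder $O(n^{-c}(n-1)!)$. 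We then compare this with the lower bound on $\gamma_1$.

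Second, if $\mathcal{F}$ is large, Theorem \ref{thmpurelycombin}(i) applied to $(\mathcal{F},\mathcal{F})$ shows $\mathcal{F}$ is trivial, so $\gamma_1(\mathcal{F})=0$.

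\textbf{The case $t=1$: the intermediate regime.} Fix the $1$-coset $\mathcal{C}=\{\sigma:\sigma(i)=j\}$ realizing $\Delta_1(\mathcal{F})$. Each $\sigma\in\mathcal{F}\setminus\mathcal{C}$ restricts $\mathcal{F}\cap\mathcal{C}$: every member of $\mathcal{F}\cap\mathcal{C}$ must agree with $\sigma$ somewhere. The $\mathcal{F}_1(n,1)$-type extremal structure arises when the outside members all lie in cosets $\{\sigma(i')=j'\}$ for a pair $\{i,i'\}$, with images in $\{j,j'\}$.

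The key step is a weighted counting inequality. For a set $\mathcal{D}\subseteq\mathcal{F}\setminus\mathcal{C}$ with $|\mathcal{D}|=\gamma_1$, we bound the number of permutations in $\mathcal{C}$ that meet all of $\mathcal{D}$. Each $\sigma\in\mathcal{F}\setminus\mathcal{C}$ kills at least about $(1/e)(n-1)!$ elements of $\mathcal{C}$. Unless the members of $\mathcal{D}$ share a common point $(i',j')$, their killed sets overlap little, and $|\mathcal{F}\cap\mathcal{C}|$ collapses. Combining this with maximality of $\Delta_1$, which gives $|\mathcal{F}\cap\mathcal{C}|\geq|\mathcal{F}\cap\{\sigma(i')=j'\}|$, forces $\mathcal{F}\setminus\mathcal{C}\subseteq\{\sigma(i')=j'\}$ in the extremal case. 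A direct count then gives $\gamma_1\leq (n-2)!-(n-3)!$, with equality only when $\mathcal{F}\subseteq\mathcal{F}_1(n,1)$ up to isomorphism.

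\textbf{Expected obstacle.} The main difficulty is the intermediate regime for $t=1$, where $|\mathcal{F}|$ is of order $(n-1)!$ but $\mathcal{F}$ is not trivial. Here the spread approximation has only an $O(n^{-c})$ error, while the target margin is $(n-3)!$, far below $n^{-c}(n-1)!$. To handle this, I would first try a bootstrapping argument. Once the spread approximation identifies that $\mathcal{F}\setminus\mathcal{C}$ lies almost entirely in one or two specific $1$-cosets, switch to exact local counting through derangement-type inclusion–exclusion. This is the same refinement the paper presumably uses for $t\geq2$, and it should carry over when $n\geq400$.
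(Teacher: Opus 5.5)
Your $t\geq2$ case is exactly the paper's. For $t=1$, however, there is a genuine gap: the step that pins down the constant $(n-2)!-(n-3)!$ is never carried out. In your ``small'' case you bound $|\mathcal F|$ by $O((n-2)!)$ and then compare with $\gamma_1(\mathcal F)\geq(n-2)!-(n-3)!$. This cannot work, because the extremal family $\mathcal F_1(n,1)$ falls into this case. It has no dominant singleton, yet $|\mathcal F_1(n,1)|=3(n-2)!-2(n-3)!$, about three times its diversity. So no bound on $|\mathcal F|$ there can be pushed below the diversity target, and comparing sizes never yields a contradiction.

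Two smaller problems also appear in this step. First, $q=O(1)$ is incompatible with a remainder $O(n^{-c}(n-1)!)$. Since $S_n$ is only about $(n/e)$-spread, the remainder is roughly $ne^{-q}(n-1)!$, so you need $q=\Theta(\log n)$. Second, to get triviality above $(1-1/e+\varepsilon)(n-1)!$ you want Theorem \ref{thmpurelycombin}(ii) with $\mathcal G=\mathcal F$. Part (i) only gives triviality above about $0.79(n-1)!$.

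Your intermediate-regime paragraph does not fill the gap. The claim that ``killed sets overlap little, so $|\mathcal F\cap\mathcal C|$ collapses'' does not bound $\gamma_1$. Even granting $\mathcal F\setminus\mathcal C\subseteq\{\sigma:\sigma(i')=j'\}$, a direct count gives only $(n-1)!-(n-2)!$. The sharp bound needs $\mathcal F\setminus\mathcal C$ to lie in a $2$-coset, i.e.\ the full $\mathcal F_1(Z)$ structure.

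The missing idea is the structural endgame of the peeling procedure. Run Algorithm \ref{algo} diagonally on $(\mathcal S,\mathcal S)$. Every outcome other than $N=q-2$ gives $\gamma_1(\mathcal F)\leq0.1(n-2)!+R$ by the estimates in the proof of Theorem \ref{thmcrossdiv}. When $N=q-2$, the terminal layer $\mathcal S_N=\mathcal X_N$ is an intersecting family of $2$-sets, each a minimal $1$-cover. It therefore has no common point and equals $\binom Z2$ for a $3$-element partial permutation $Z$.

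This gives a clean dichotomy:
\begin{itemize}
\item If $\mathcal F\subseteq\mathcal F_1(Z)$, monotonicity of $\gamma_1$ gives the bound exactly, together with the equality structure.
\item Otherwise some $L\in\mathcal F$ satisfies $L\cap Z\subseteq\{z\}$. Every member of $\mathcal F[Z\setminus\{z\}]$ must meet $L$, which is disjoint from $Z\setminus\{z\}$. Hence $|\mathcal F[Z\setminus\{z\}]|\leq(n-2)!-d_{n-2}\leq0.67(n-2)!$, and so $\gamma_1(\mathcal F)\leq|\mathcal F\setminus\mathcal F[\{z\}]|\leq0.77(n-2)!+R$.
\end{itemize}

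This also shows your ``expected obstacle'' is misdiagnosed. Non-extremal cases miss the target by $\Theta((n-2)!)$, far more than $R=n^{-2}(n-1)!$. The extremal case is settled by exact containment plus monotonicity. No precision at the scale of $(n-3)!$ and no bootstrapping is needed.
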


Let us note that the linear dependence on $t$ in
Theorems \ref{thmcrossdiv} and \ref{thmdiv}
is also essentially optimal.
Indeed, let $n=\lfloor4t/3\rfloor$ and take
$\mathcal F=\mathcal G=\mathcal F_2(n,t)$.
A direct count gives
$|\mathcal F_2(n,t)|=(9/2+o(1))(n-t)!$ as $t\to\infty$,
whereas
$t((n-t-1)!-(n-t-2)!)=(3+o(1))(n-t)!$.
Since every $t$-coset has size $(n-t)!$, we obtain
\[
\gamma_t(\mathcal F_2(n,t))
\geq|\mathcal F_2(n,t)|-(n-t)!
=\left(\frac72+o(1)\right)(n-t)!
>\gamma_t(\mathcal F_1(n,t))
\]
for $n=\lfloor4t/3\rfloor$ and all sufficiently large $t$.

Our approach may apply to a variety of objects. As an application, we apply the method to the alternating group, proving a product version of the Hilton--Milner--Frankl theorem. 
\begin{theorem}\label{thmcrossAn}
	Let $t\geq1$ and $n\geq400t$, and let $\mathcal{F},\mathcal{G}\subseteq A_n$ be cross $t$-intersecting. If both $\mathcal{F}$ and $\mathcal{G}$ are non-trivial, then $|\mathcal{F}||\mathcal{G}|\leq0.25((n-t)!-d_{n-t}-d_{n-t-1}+(-1)^{n-t-1}+2t)^2$. Equality holds if and only if there exist $\rho,\pi\in S_n$ of the same parity and $\alpha,\beta\in A_n$ satisfying the conditions in
	Construction \ref{construction} such that $(\mathcal{F},\mathcal{G})=\big(\rho(\mathcal{H}(\alpha,\beta)\cap A_n)\pi,\rho(\mathcal{H}(\beta,\alpha)\cap A_n)\pi\big)$.
\end{theorem}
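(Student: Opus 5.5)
The plan is to derive Theorem \ref{thmcrossAn} from the $S_n$ machinery behind Theorem \ref{thmpurelycombin} (ii), in the same two stages: a spread-approximation stage, then an exact stage.

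\textbf{Stage 1: spread approximation.} First, I will run the spread approximation argument inside $A_n$. The key facts needed are:
\begin{itemize}
\item a $t$-coset of $S_n$ meets $A_n$ in exactly $(n-t)!/2$ elements when $n-t\geq2$;
\item $A_n$, viewed as a family of $n$-subsets of $[n]^2$, is still spread. Fixing a partial permutation of size $s$ leaves $(n-s)!/2$ completions, up to a constant factor the same as in $S_n$.
\end{itemize}
So the spread lemmas carry over with constants changed by at most a factor of $2$, which $n\geq400t$ should absorb. This yields $t$-spread approximations $\mathcal{S}_\mathcal{F},\mathcal{S}_\mathcal{G}$ of partial permutations of size at most $t+O(1)$, with small remainders. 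Cross $t$-intersection of the approximations then forces both families, if large, to be concentrated on a common $t$-coset $\mathcal{C}$. Otherwise the product is at most about $((n-t)!/2)^2\cdot c$ with $c$ clearly below the target ratio, which is $(1-e^{-1})^2$ asymptotically.

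\textbf{Stage 2: exact bound.} Next, I will normalize $\mathcal{C}$ to the stabilizer of $[t]$ by a double translate $\rho\cdot\pi$ with $\rho,\pi$ of the same parity, since that preserves $A_n$. Write $\mathcal{F}=\mathcal{F}_0\cup\mathcal{F}_1$ with $\mathcal{F}_0=\mathcal{F}\cap\mathcal{C}$, and similarly for $\mathcal{G}$. Both $\mathcal{F}_1$ and $\mathcal{G}_1$ are nonempty by non-triviality. Each $\tau\in\mathcal{G}_1$ moves some $i\in[t]$, so every $\sigma\in\mathcal{F}_0$ must agree with $\tau$ in at least $t$ places. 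This forces $\sigma$ to hit a point of agreement with $\tau$ outside $[t]$.

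The counting is the Ellis-type count restricted to even permutations. The number of even permutations fixing $[t]$ and having a prescribed fixed-point behaviour on the remaining $n-t$ points is governed by the difference between even and odd derangement counts, namely $(-1)^{m-1}(m-1)$ for $m=n-t$. Concretely:
\begin{itemize}
\item the even elements of $\mathcal{H}(\alpha,\beta)\cap\mathcal{C}$ number $\tfrac12\bigl((n-t)!-d_{n-t}-d_{n-t-1}+(-1)^{n-t-1}\bigr)$;
\item the $t$ extra elements $(1\,i)\alpha(1\,i)$ remain even when $\alpha\in A_n$.
\end{itemize}
Together these give the value $\tfrac12(\cdots+2t)$ in the statement.

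I then bound $|\mathcal{F}_0|\leq|\mathcal{C}\cap A_n|-(\text{forced losses from }\mathcal{G}_1)$ and symmetrically for $\mathcal{G}_0$. I will show that the losses are minimized when $|\mathcal{F}_1|$ and $|\mathcal{G}_1|$ are each as small as possible relative to these losses. The comparison between one outside element and several outside elements should go through the lemma the paper uses in the $S_n$ case (Lemma \ref{lemmaind}), reproved for $A_n$. The point is that each additional outside element of $\mathcal{G}_1$ costs $\mathcal{F}_0$ roughly $(n-t-1)!/2$ more, far exceeding the gain. Equality then forces $\mathcal{G}_1$ to be as in Construction \ref{construction}, intersected with $A_n$, with $\alpha,\beta$ even.

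\textbf{Main obstacle.} The hard step is this exact counting in the parity setting. The usual inclusion–exclusion for permutations agreeing with $\beta$ somewhere outside $[t]$ must be split by sign, and the extremal structure must be checked. In particular, I need to verify that $\alpha,\beta$ can be chosen in $A_n$ satisfying the conditions of Construction \ref{construction}, and that no odd–even asymmetry creates a competing configuration. I would first compute the signed count $\sum_{\sigma}\operatorname{sgn}(\sigma)$ over the relevant sets via the determinant/permanent identity for signed derangements, and then feed the halved counts into the $S_n$ inequalities.
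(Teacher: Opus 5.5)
Your architecture is the paper's: approximate the pair, reduce to a common $t$-coset $S_n[X]$, then do an exact count with an $A_n$ analogue of Lemma \ref{lemmaind} and the signed derangement identity. Your value $\tfrac12\bigl((n-t)!-d_{n-t}-d_{n-t-1}+(-1)^{n-t-1}\bigr)$ for the even part of $\mathcal H(\alpha,\beta)\cap\mathcal C$ is also right.

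\textbf{The gap is Stage 1.} This is where essentially all the work for the linear range lies, and it has two problems.

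First, approximating sets of size $t+O(1)$ do not give small remainders. The remainder in Theorem \ref{thmspreadapp}, divided by $(n-t)!$, is about $\binom{n}{t}e^{-(q-t+1)}$. So you need $q-t\approx3\ln\binom{n}{t}=\Theta(t\ln(n/t))$, as in Theorem \ref{thmcrossspreadapp}. That theorem, incidentally, applies verbatim to $\mathcal F,\mathcal G\subseteq A_n\subseteq S_n$, so you need not redo it inside $A_n$.

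Second, the sentence ``cross $t$-intersection of the approximations forces both families, if large, onto a common $t$-coset'' does not follow from anything you state. With sets of size $\Theta(t\ln(n/t))$, bounding $\mathcal F$ and $\mathcal G$ separately only works for $n/\ln n=\Omega(t^2)$. The paper instead runs the peeling procedure (Algorithm \ref{algo}) on $(\mathcal S,\mathcal T)$:
\begin{itemize}
\item If it stops before the $t$-uniform layer, it uses the product decomposition of Lemma \ref{lemmadecomp} together with Lemma \ref{lemmatcoverkey-permalt}. The key is that $\tau_t(\mathcal X_i)$ is kept as a shared parameter in the bounds for $|\mathcal F[\mathcal X_i]|$ and for $\Delta_t(\mathcal G[\mathcal T_i])$.
\item If it reaches that layer with $j>0$, one family has size $O(q(n-t-1)!)$ by Lemma \ref{lemmaindalt}(i).
\item Only $j=0$ yields $\mathcal S_0=\mathcal T_0=\{X\}$, i.e.\ at most $R$ members of each family lie outside $S_n[X]$.
\end{itemize}

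Your remark that constants change ``by at most a factor of $2$, which $n\geq400t$ should absorb'' also points the wrong way. The proof of Lemma \ref{lemmagroupproduct} only gives $0.3((n-t)!)^2$, while the $A_n$ target $(h_A(n,t)+t)^2$ is about $0.1((n-t)!)^2$. You must \emph{gain} a factor $\tfrac12$ in each of the two families; this is how the paper gets below $0.05((n-t)!)^2$.

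\textbf{Stage 2 needs the right dichotomy.} Your heuristic that each additional outside element of $\mathcal G_1$ costs $\mathcal F_0$ about $(n-t-1)!/2$ is false as stated. The $t$ conjugates $(1\,i)\beta(1\,i)$ all impose the same constraint on $\mathcal F_0$, namely agreeing with $\beta$ somewhere on $[t+1,n]\setminus\{\beta^{-1}(1)\}$. That is exactly why the extremal pair carries $t$ outside elements for free. The correct tool is the dichotomy of Claim \ref{claimalt}, preceded by a separate case:
\begin{itemize}
\item An outside element $K$ with $|K\cap X|\leq t-2$ cuts the opposite family inside the coset to at most $\tfrac14(n-t)!$, by Lemma \ref{lemmaindalt}(ii).
\item An outside element with $|K\cap X|=t-1$ that is not one of those $t$ conjugates costs at least $\min\{e_{n-t-1},o_{n-t-1}\}\geq0.15(n-t-1)!>2R$, by Lemma \ref{lemmaindalt}(iii). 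This outweighs the at most $R$ outside elements.
\end{itemize}
With these two cases handled, the equality analysis and the parity bookkeeping go through as you anticipate.
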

As a direct corollary, we obtain a Hilton--Milner--Frankl type theorem, which  has been established by Ellis \cite{Ellis-2011} for sufficiently large $n$ depending on $t$.
\begin{corollary}\label{thmAn}
	Let $t\geq1$ and $n\geq400t$, and let $\mathcal{F}\subseteq A_n$ be a non-trivial $t$-intersecting family. Then $|\mathcal{F}|\leq0.5((n-t)!-d_{n-t}-d_{n-t-1}+(-1)^{n-t-1})+t$.  Equality holds if and only if there exist $\rho,\pi\in S_n$ of the same parity and $\alpha\in A_n$ fixing all but the point $1$ of $[t]$ such that $\mathcal{F}=\rho(\mathcal{H}(\alpha,\alpha)\cap A_n)\pi$.
\end{corollary}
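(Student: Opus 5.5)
The statement to prove is Corollary \ref{thmAn}. I would take $\mathcal{G}=\mathcal{F}$ in Theorem \ref{thmcrossAn}. A $t$-intersecting family is cross $t$-intersecting with itself. Since $\mathcal{F}$ is non-trivial, both members of the pair are non-trivial, so
\[
|\mathcal{F}|^2\leq 0.25\bigl((n-t)!-d_{n-t}-d_{n-t-1}+(-1)^{n-t-1}+2t\bigr)^2 .
\]
Taking square roots gives $|\mathcal{F}|\leq 0.5((n-t)!-d_{n-t}-d_{n-t-1}+(-1)^{n-t-1})+t$, which is the bound claimed. The main work is therefore the equality case.

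\emph{Equality, forward direction.} If equality holds, Theorem \ref{thmcrossAn} supplies $\rho,\pi$ of the same parity and $\alpha,\beta\in A_n$ satisfying Construction \ref{construction} with
\[
\mathcal{F}=\rho(\mathcal{H}(\alpha,\beta)\cap A_n)\pi=\rho(\mathcal{H}(\beta,\alpha)\cap A_n)\pi .
\]
Cancelling $\rho,\pi$ gives $\mathcal{H}(\alpha,\beta)\cap A_n=\mathcal{H}(\beta,\alpha)\cap A_n$, and from this I need to deduce $\alpha=\beta$.

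\begin{itemize}
\item The extra elements $(1\,i)\alpha(1\,i)$, $i\in[t]$, do not fix $1$, since $\alpha(1)\neq1$. The main part of $\mathcal{H}(\alpha,\beta)$ consists of permutations fixing every point of $[t]$, so these extra elements lie outside it.
\item Conjugation by a transposition preserves parity, so each $(1\,i)\alpha(1\,i)$ lies in $A_n$. They are therefore exactly the members of $\mathcal{H}(\alpha,\beta)\cap A_n$ that do not fix $1$.
\item Applying the same description to $\mathcal{H}(\beta,\alpha)$ and comparing the $i=1$ element gives $\alpha=\beta$.
\end{itemize}

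Consequently $\alpha$ fixes every point of $[2,t]$ and has $\alpha(1)\neq1$, which is the stated form. For $t\geq2$, the requirement in Construction \ref{construction} that $\alpha(j)=\beta(j)>t$ for at least two $j\in[t+1,n]$ holds automatically when $\alpha=\beta$. Indeed, $\alpha$ maps the $n-t$ points of $[t+1,n]$ into $[n]\setminus\{2,\dots,t\}$, and at most one of them is sent to $1$. Since $n-t\geq3$, at least two are sent to values exceeding $t$.

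\emph{Equality, converse direction.} Given $\alpha\in A_n$ fixing all points of $[t]$ except $1$, I set $\beta=\alpha$. For $t=1$, the condition that $\alpha$ and $\beta$ agree somewhere is trivial, and for $t\geq2$ the conditions were just checked. The pair $(\mathcal{H}(\alpha,\alpha),\mathcal{H}(\alpha,\alpha))$ is then an extremal pair of Theorem \ref{thmcrossAn}, so $\mathcal{F}=\rho(\mathcal{H}(\alpha,\alpha)\cap A_n)\pi$ is $t$-intersecting, non-trivial and of the extremal size.

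The only real obstacle is the step forcing $\alpha=\beta$. It relies on the extra elements being distinguishable from the main part of the family, which is exactly what the first two bullets establish.
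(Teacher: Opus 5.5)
Your route is the intended one. The paper states this as a direct corollary of Theorem \ref{thmcrossAn} with $\mathcal{G}=\mathcal{F}$ and leaves the equality analysis implicit. Your derivation of the bound by taking square roots is correct. So is your reduction of the equality case to proving $\alpha=\beta$, and your check that $(\alpha,\alpha)$ satisfies the conditions of Construction \ref{construction} (at least $n-t-1\geq2$ points of $[t+1,n]$ are sent above $t$).

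There is, however, a false step in your argument for $\alpha=\beta$ when $t\geq2$. The elements $(1\,i)\alpha(1\,i)$ with $i\in[2,t]$ \emph{do} fix $1$, because $1\mapsto i\mapsto\alpha(i)=i\mapsto 1$. The point such an element moves is $i$; it fixes exactly $[t]\setminus\{i\}$. So the extra elements are not ``the members of $\mathcal{H}(\alpha,\beta)\cap A_n$ that do not fix $1$''. They are the members lying outside the $t$-coset that fixes $[t]$ pointwise.

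The repair is immediate, and in fact simpler than your original argument:
\begin{itemize}
\item Every member of the main part fixes $1$, and so does every extra element with $i\neq1$.
\item The $i=1$ element is $\alpha$ itself, which lies in $A_n$ and moves $1$.
\item Hence $\alpha$ is the unique member of $\mathcal{H}(\alpha,\beta)\cap A_n$ not fixing $1$, and likewise $\beta$ is the unique such member of $\mathcal{H}(\beta,\alpha)\cap A_n$.
\item Equality of the two sets then gives $\alpha=\beta$.
\end{itemize}
With this correction the proof is complete.
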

\subsection{Our approach}
Our proofs are purely combinatorial and use three main tools: spread approximation, the peeling procedure for a pair, and $t$-covers. We note that the iterative spread approximation argument developed in \cite{Kupavskii-2024,Saengrungkongka,KKLS} does not seem to extend directly to cross $t$-intersecting families, as the iterative step relies on intersection within a single family. The peeling procedure was introduced by Kupavskii
and Zakharov \cite{Kupavskii-Zakharov-2024} and generalized
to pairs of cross $t$-intersecting families by the present
authors \cite{Wen-Lv-2026+}. In our approach, the following three refinements are essential. 

\begin{itemize}
	\item \emph{Generating a spread approximation through $t$-covers.}\;A simple yet surprisingly useful observation is that the families under consideration admit $t$-covers. Therefore, before applying the iteration procedure generating spread approximations, we decompose the family into $t$-links via a $t$-cover, and then apply the original iteration technique separately to the corresponding $t$-links (Theorem \ref{thmspreadapp}). The resulting spread approximation is then lifted and collected into a single family. Although this preliminary step is elementary, its effect on the remainder is substantial. Roughly speaking, for all sufficiently large $t$, and with the parameters used here, the resulting remainder is at most $(8/t)^t$ times the one obtained by applying the method directly. This is essential to settle the problem for $n\geq400t$.

	\item \emph{Spread approximation for a pair.}\;Applying spread approximation to the two families separately does not ensure that the resulting families remain cross $t$-intersecting. In addition, the peeling procedure for a pair does not give useful estimates when applied directly to the original permutation families. This is because the usual set interpretation (given in (\ref{equsetinter})) of permutations produces a family that is \emph{not} sufficiently spread, leading to bad estimates. To overcome these difficulties, we prove a cross version of spread approximation (Theorem \ref{thmcrossspreadapp}). More precisely, given a pair $(\mathcal{F},\mathcal{G})$ of cross $t$-intersecting families, there exists a pair $(\mathcal{S},\mathcal{T})$ of families consisting of sets of much smaller size such that all but a small number of members of $\mathcal{F}$ (of $\mathcal{G}$) contain some set from $\mathcal{S}$ (from $\mathcal{T}$). Moreover, when $n$ is large compared with $t$, we can require that $\mathcal{S}$ and $\mathcal{T}$ are also cross $t$-intersecting and that both remainders are small enough for our later estimates. We then apply the peeling procedure to the pair $(\mathcal{S},\mathcal{T})$ to characterize the original pair $(\mathcal{F},\mathcal{G})$. This provides the link between spread approximation and the peeling procedure for a pair. 
	
	\item \emph{A decomposition of the Cartesian product.}
	To estimate $|\mathcal F||\mathcal G|$, we use the following decomposition given in Lemma \ref{lemmadecomp}.
	$$
	\mathcal F[\mathcal S]\times\mathcal G[\mathcal T]
	\subseteq
	\bigcup_{i=0}^{N}
	\bigl(\mathcal F[\mathcal X_i]\times\mathcal G[\mathcal T_i]\bigr)
	\cup
	\bigcup_{i=0}^{N-1}
	\bigl(\mathcal F[\mathcal S_i]\times\mathcal G[\mathcal Y_i]\bigr).
	$$
	This allows us to make effective use of an important ingredient
	in the $t$-cover method (Lemma \ref{lemmatcoverkey-perm}). More precisely, the key to using this lemma is to find a collection of \(t\)-covers whose members are relatively small and whose $t$-covering number is large. For a pair $(\mathcal{F}[\mathcal{X}_i], \mathcal{G}[\mathcal{T}_i])$, the members of $\mathcal X_i$ have size $O(t\ln(n/t))$, which is sufficiently small compared with $n$. Then we use $\mathcal X_i$ as a collection of $t$-covers to estimate $|\mathcal G[\mathcal T_i]|$, where the resulting bound depends on the $t$-covering number of $\mathcal X_i$. We retain this dependence when combining it with the estimate
	for $|\mathcal F[\mathcal X_i]|$. The same argument applies to every pair
	$(\mathcal F[\mathcal S_i],\mathcal G[\mathcal Y_i])$.
	Applying the estimates separately to the families 
	gives the desired bound only for 
	$n/\ln n=\Omega(t^2)$. Using this decomposition, we obtain the product estimate in Lemma \ref{lemmagroupproduct} under the linear condition \(n\geq400t\).
\end{itemize}
\subsection{Organization}
The rest of this paper is organized as follows. In Section \ref{sec2}, we  recall the peeling procedure for a pair and prove two theorems on spread approximation (Theorems \ref{thmspreadapp} and \ref{thmcrossspreadapp}). In Section \ref{sec3}, we derive estimates using the peeling procedure and combine them with the
approximation theorems to prove Theorems \ref{thmpurelycombin}, \ref{thmcrossdiv}
and \ref{thmdiv}. In Section \ref{secalternating}, we adapt the approach to the alternating group, and prove Theorem  \ref{thmcrossAn}.

\section{Preliminaries and spread approximation}\label{sec2}
In this section, we first fix notation and recall the peeling procedure for a pair of cross $t$-intersecting families. We then prove a spread approximation theorem for families admitting $t$-covers and apply it to cross $t$-intersecting families of permutations.

For families $\mathcal{A}$ and $\mathcal{S}$ of subsets of a set $\Omega$ and a subset $X$ of $\Omega$, we write
\begin{align*}
	\mathcal{A}[X]&:=\{A:A\in\mathcal{A},X\subseteq A\}.\\
	\mathcal{A}(X)&:=\{A\setminus X:A\in\mathcal{A},X\subseteq A\}.\\
	\mathcal{A}[\mathcal{S}]&:=\left\{A:A\in\mathcal{A}, S\subseteq A\;\mbox{for some}\;S\in\mathcal{S}\right\}.
\end{align*}
Note that $\mathcal{A}(X)$ has the same size as $\mathcal{A}[X]$, while it is a family of subsets of $\Omega\setminus X$. For $i\geq0$, the \emph{maximum $i$-degree} of $\mathcal{A}$ is defined to be
\begin{equation*}
	\Delta_i(\mathcal{A}):=\max\{|\mathcal{A}[S]|:S\subseteq\Omega,\;|S|=i\}.
\end{equation*}
The family $\mathcal{A}$ is said to be \emph{$r$-spread} if 
$$\Delta_s(\mathcal{A})\leq r^{-s}|\mathcal{A}|\;\;\mbox{for all}\;\;s\geq0.$$
It is \emph{weakly $(r,t)$-spread} if 
$$\Delta_{t+s}(\mathcal{A})\leq r^{-s}\Delta_{t}(\mathcal{A})\;\;\mbox{for all}\;\;s\geq0.$$
Let us collect a useful estimate by Saengrungkongka \cite{Saengrungkongka}.
\begin{equation}\label{equfractorial}
	\frac{m!}{a!}\geq\left(\frac{m}{e}\right)^{m-a}\;\;\mbox{for}\;\;m\geq a.
\end{equation} 
The following lemma is readily verified.
\begin{lemma}\label{lemmasnspread}
	The family $S_n$ is $r$-spread for $r\leq n/e$, and is weakly $(r,q)$-spread for $r\leq(n-q)/e$. 
\end{lemma}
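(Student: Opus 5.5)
The plan is to view $S_n$ as a subfamily of $\binom{[n]^2}{n}$ via (\ref{equsetinter}) and compute the degrees $\Delta_s(S_n)$ explicitly. Then each spreadness inequality reduces to a ratio of factorials, which (\ref{equfractorial}) handles.

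\emph{Step 1 (degrees).} Let $S\subseteq[n]^2$ with $|S|=s$. If $S$ is not a partial permutation, that is, two of its pairs share a first coordinate or share a second coordinate, then $S_n[S]=\emptyset$. If $S$ is a partial permutation, it prescribes the values of $\sigma$ on $s$ points, and exactly $(n-s)!$ permutations extend it. Hence $\Delta_s(S_n)=(n-s)!$ for $0\le s\le n$, and $\Delta_s(S_n)=0$ for $s>n$. Those values of $s$ satisfy every bound trivially.

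\emph{Step 2 ($r$-spreadness).} We must show $(n-s)!\le r^{-s}n!$, i.e.\ $r^s\le n!/(n-s)!$, for $0\le s\le n$. Apply (\ref{equfractorial}) with $m=n$ and $a=n-s$:
\[
\frac{n!}{(n-s)!}\ge\left(\frac{n}{e}\right)^{s}\ge r^s
\]
whenever $r\le n/e$. The case $s=0$ is trivial.

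\emph{Step 3 (weak $(r,q)$-spreadness).} We must show $\Delta_{q+s}(S_n)\le r^{-s}\Delta_q(S_n)$, i.e.\ $(n-q-s)!\,r^s\le(n-q)!$ for $0\le s\le n-q$. Apply (\ref{equfractorial}) with $m=n-q$ and $a=n-q-s$:
\[
\frac{(n-q)!}{(n-q-s)!}\ge\left(\frac{n-q}{e}\right)^s\ge r^s
\]
whenever $r\le (n-q)/e$. For $s>n-q$ the left-hand degree is $0$, so the inequality holds.

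There is no real obstacle. The only points needing care are that non-partial-permutation sets have degree zero, and the boundary cases $s>n$ (or $s>n-q$) together with $s=0$.
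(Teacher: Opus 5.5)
Your proposal is correct and is exactly the verification the paper intends: it calls the lemma ``readily verified'' right after recording \eqref{equfractorial}, and the check is precisely $\Delta_s(S_n)=(n-s)!$ (attained at partial permutations), followed by \eqref{equfractorial} with $(m,a)=(n,n-s)$ and $(m,a)=(n-q,n-q-s)$. Your treatment of non-partial-permutation sets and of the boundary cases $s>n$ and $s>n-q$ is also fine.
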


For a family $\mathcal{A}$ of subsets of $\Omega$, a set $T\subseteq\Omega$ is a \emph{$t$-cover} of $\mathcal A$ if $|T\cap A|\geq t$ for every $A\in\mathcal A$. If such a subset exists, then the \emph{$t$-covering number} of $\mathcal{A}$, denoted $\tau_t(\mathcal A)$, is defined to be 
the minimum size of a $t$-cover of $\mathcal A$.
A $t$-cover is \emph{minimal} if none of its proper subsets
is a $t$-cover. 

Let $\mathcal F\subseteq S_n$ be a family of permutations. We define its $t$-covers and $t$-covering number as above by regarding $\mathcal F$ as a subfamily of $\binom{[n]^2}{n}$ through the set interpretation \eqref{equsetinter}. In terms of permutations, a $t$-cover can equivalently be described as a collection of distinct assignments $i_1\mapsto j_1,\ldots,i_m\mapsto j_m$, where $i_s,j_s\in[n]$, such that every $\sigma\in\mathcal F$ satisfies $\sigma(i_s)=j_s$ for at least $t$ indices $s\in[m]$. Then the $t$-covering number $\tau_t(\mathcal F)$ is the minimum number of such assignments.

Next, we recall the peeling procedure for a pair from \cite{Wen-Lv-2026+}. The procedure repeatedly uses the following lemma to replace a pair of cross $t$-intersecting families by a fingerprint. The resulting sequences of fingerprints encode information about the original families that can be used both to estimate their sizes and to characterize their structure.

\begin{lemma}[\cite{Wen-Lv-2026+}]\label{lemmafingerprintdef}
	Let $\Omega$ be a finite set and $(\mathcal{S},\mathcal{T})$ be a pair of cross $t$-intersecting families of subsets of $\Omega$. Then there is a pair of cross $t$-intersecting families $(\mathcal{S}^*,\mathcal{T}^*)$, called a \emph{fingerprint} of $(\mathcal{S},\mathcal{T})$, such that
	\begin{itemize}
		\item[\rm(i)]Every member of $\mathcal{S}^*$ {\rm(}of $\mathcal{T}^*${\rm)} is contained in some of $\mathcal{S}$ {\rm(}of $\mathcal{T}${\rm)};
		\item[\rm(ii)]Every member of $\mathcal{S}$ {\rm(}of $\mathcal{T}${\rm)} contains some of $\mathcal{S}^*$ {\rm(}of $\mathcal{T}^*${\rm)};
		\item[\rm(iii)]Every member of $\mathcal{S}^*$ {\rm(}of $\mathcal{T}^*${\rm)} is a minimal $t$-cover of $\mathcal{T}^*$ {\rm(}of $\mathcal{S}^*${\rm)}.
	\end{itemize}
\end{lemma}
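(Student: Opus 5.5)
The plan is to use an extremal (minimal counterexample) argument: among all admissible pairs, pick one that minimizes a size potential, and show that failure of (iii) would let us shrink it further.

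\textbf{Setup.} Call a pair $(\mathcal{S}',\mathcal{T}')$ of families of subsets of $\Omega$ \emph{admissible} if three things hold:
\begin{itemize}
\item it is cross $t$-intersecting;
\item every member of $\mathcal{S}'$ (of $\mathcal{T}'$) is contained in some member of $\mathcal{S}$ (of $\mathcal{T}$);
\item every member of $\mathcal{S}$ (of $\mathcal{T}$) contains some member of $\mathcal{S}'$ (of $\mathcal{T}'$).
\end{itemize}
The pair $(\mathcal{S},\mathcal{T})$ itself is admissible, so admissible pairs exist. Since $\Omega$ is finite, there are only finitely many admissible pairs. We pick one, $(\mathcal{S}^*,\mathcal{T}^*)$, minimizing the potential
\[
\Phi(\mathcal{S}',\mathcal{T}')=\sum_{A\in\mathcal{S}'}|A|+\sum_{B\in\mathcal{T}'}|B|+|\mathcal{S}'|+|\mathcal{T}'| .
\]
Properties (i) and (ii) then hold by admissibility, so only (iii) needs checking.

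\textbf{Key step.} Suppose some $A\in\mathcal{S}^*$ is not a minimal $t$-cover of $\mathcal{T}^*$. Because the pair is cross $t$-intersecting, $A$ is a $t$-cover of $\mathcal{T}^*$. Non-minimality means there is a proper subset $A'\subsetneq A$ that is still a $t$-cover of $\mathcal{T}^*$. Define $\mathcal{S}''=(\mathcal{S}^*\setminus\{A\})\cup\{A'\}$. We verify that $(\mathcal{S}'',\mathcal{T}^*)$ is admissible:
\begin{itemize}
\item It is cross $t$-intersecting, since $A'$ is a $t$-cover of $\mathcal{T}^*$.
\item $A'\subseteq A$, and $A$ lies in some member of $\mathcal{S}$; hence $A'$ does too.
\item Any member of $\mathcal{S}$ that previously contained $A$ now contains $A'$. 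All other containments are unchanged.
\end{itemize}

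\textbf{Contradiction.} We compare potentials in two cases.
\begin{itemize}
\item If $A'\notin\mathcal{S}^*$, the potential drops by $|A|-|A'|\ge 1$.
\item If $A'\in\mathcal{S}^*$ already, we have simply deleted $A$, and the potential drops by $|A|+1$.
\end{itemize}
Either way we contradict the minimality of $(\mathcal{S}^*,\mathcal{T}^*)$. The argument for members of $\mathcal{T}^*$ is symmetric. Hence every member of $\mathcal{S}^*$ (of $\mathcal{T}^*$) is a minimal $t$-cover of $\mathcal{T}^*$ (of $\mathcal{S}^*$).

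\textbf{Expected difficulty.} There is no serious obstacle here. The only point needing care is choosing a potential that strictly decreases even when the replacement set $A'$ already belongs to the family. That is why the number of members is included in $\Phi$; this also keeps the degenerate case of empty families from causing trouble.
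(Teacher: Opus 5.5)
Your argument is correct. Choosing an admissible pair that minimizes $\Phi$, and observing that any non-minimal member $A$ can be replaced by a proper subset $A'$ that is still a $t$-cover, does preserve cross $t$-intersection together with (i) and (ii), and it strictly lowers $\Phi$; that is all the lemma needs. The paper does not prove this lemma itself; it is imported from the authors' earlier work, so there is no proof here to compare against. Your extremal argument is the standard route, equivalent to repeatedly shrinking non-minimal members until nothing changes, which terminates because the total size drops each time.

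Two small remarks. First, the term $|\mathcal{S}'|+|\mathcal{T}'|$ in $\Phi$ is not actually needed. In the case $A'\in\mathcal{S}^*$, deleting $A$ already lowers $\sum_{A\in\mathcal{S}'}|A|$ by $|A|\geq 1$, because $A$ has the proper subset $A'$. Second, (iii) automatically makes $\mathcal{S}^*$ and $\mathcal{T}^*$ antichains: if $A\subsetneq B$ both lay in $\mathcal{S}^*$, then $B$ would not be a minimal $t$-cover. This is the property the peeling procedure relies on later.
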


\begin{algorithm}[\cite{Wen-Lv-2026+}, Peeling procedure for a pair]\label{algo}
Suppose that a finite set $\Omega$, integers $q$ and $t$ with $q\geq t$, a pair of cross $t$-intersecting families $(\mathcal{S},\mathcal{T})$ with  $\mathcal{S},\mathcal{T}\subseteq\binom{\Omega}{\leq q}$, and a fingerprint $(\mathcal{S}_0,\mathcal{T}_0)$ of $(\mathcal{S},\mathcal{T})$ are given. Set $N=0$. For $i=0,1,\ldots,q-t$, do the following:

\begin{itemize}
	\item[\rm(a)]Set  $\mathcal{X}_{i}=\mathcal{S}_{i}\cap\binom{\Omega}{q-i}$ and $\mathcal{Y}_{i}=\mathcal{T}_{i}\cap\binom{\Omega}{q-i}$. If $\mathcal{S}_i=\mathcal{X}_i$ or $\mathcal{T}_i=\mathcal{Y}_i$,
	terminate the procedure and set $N=i$.
	\item[\rm(b)]Find a fingerprint of the pair  $(\mathcal{S}_{i}\setminus\mathcal{X}_{i},\mathcal{T}_{i}\setminus\mathcal{Y}_{i})$, and denote it by $(\mathcal{S}_{i+1},\mathcal{T}_{i+1})$.
\end{itemize}
 Output $N$ and $(\mathcal{S}_{i},\mathcal{T}_{i},\mathcal{X}_{i},\mathcal{Y}_i)$ for $i\leq N$.
\end{algorithm}
\begin{lemma}[\cite{Wen-Lv-2026+}]\label{lemmafingerprintproperty}
	The following hold for $i=0,1,\ldots,N$.
	\begin{itemize}
		\item[\rm(i)] $\mathcal{S}_i, \mathcal{T}_i\subseteq\binom{\Omega}{\leq(q-i)}$ and $\mathcal{X}_{i}, \mathcal{Y}_i\subseteq\binom{\Omega}{q-i}$, and all of them are antichains.
		\item[\rm(ii)]$\mathcal{S}[\mathcal{S}_{i-1}]\subseteq\mathcal{S}[\mathcal{X}_{i-1}]\cup\mathcal{S}[\mathcal{S}_i]$ and $\mathcal{T}[\mathcal{T}_{i-1}]\subseteq\mathcal{T}[\mathcal{Y}_{i-1}]\cup\mathcal{T}[\mathcal{T}_i]$ for $i\geq1$.
		\item[\rm(iii)]$\mathcal{S}\subseteq\left(\bigcup_{j=0}^{i-1}\mathcal{S}[\mathcal{X}_j]\right)\cup\mathcal{S}[\mathcal{S}_i]$ and $\mathcal{T}\subseteq\left(\bigcup_{j=0}^{i-1}\mathcal{T}[\mathcal{Y}_j]\right)\cup\mathcal{T}[\mathcal{T}_i]$ for $i\geq1$.
		\item[\rm(iv)]If $\mathcal{H}$ is a $(q-a)$-uniform subfamily of $\mathcal{S}_i$ or $\mathcal{T}_i$, then $\Delta_t(\mathcal{H})\leq(q-t-i+1)^{q-t-a}$.
		\item[\rm(v)]$\max\{|\mathcal{X}_i|,|\mathcal{Y}_i|\}\leq\binom{q-i}{t}(q-t-i+1)^{q-t-i}$.
	\end{itemize}
\end{lemma}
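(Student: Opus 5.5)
The plan is induction on $i$ along Algorithm~\ref{algo}, using only the three defining properties of a fingerprint in Lemma~\ref{lemmafingerprintdef}. Items (i)--(iii) are bookkeeping. Item (iv) is the real content and needs a branching (search-tree) argument exploiting minimality of $t$-covers. Item (v) follows from (iv) by choosing a fixed $t$-cover.

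\emph{Items (i)--(iii).} For (i), note $\mathcal{S}_0,\mathcal{T}_0\subseteq\binom{\Omega}{\le q}$ by Lemma~\ref{lemmafingerprintdef}(i). If $\mathcal{S}_i\subseteq\binom{\Omega}{\le q-i}$, then every member of $\mathcal{S}_i\setminus\mathcal{X}_i$ has size at most $q-i-1$. Hence, again by Lemma~\ref{lemmafingerprintdef}(i), $\mathcal{S}_{i+1}\subseteq\binom{\Omega}{\le q-i-1}$, and the same holds for $\mathcal{T}$. The antichain property comes from Lemma~\ref{lemmafingerprintdef}(iii): if $A\subsetneq A'$ were both minimal $t$-covers of $\mathcal{T}_i$, then $A'$ would not be minimal. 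Sub-families of antichains, such as $\mathcal{X}_i$ and $\mathcal{Y}_i$, are antichains.

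For (ii), take $S\in\mathcal{S}[\mathcal{S}_{i-1}]$ containing some $A\in\mathcal{S}_{i-1}$. Either $A\in\mathcal{X}_{i-1}$, or $A\in\mathcal{S}_{i-1}\setminus\mathcal{X}_{i-1}$. In the latter case $A$ contains a member of $\mathcal{S}_i$ by Lemma~\ref{lemmafingerprintdef}(ii). Item (iii) follows by iterating (ii), starting from $\mathcal{S}=\mathcal{S}[\mathcal{S}_0]$, which is Lemma~\ref{lemmafingerprintdef}(ii) for the initial fingerprint.

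\emph{Item (iv): branching argument.} Fix a $t$-set $T$ and consider $\mathcal{H}[T]$, where $\mathcal{H}\subseteq\mathcal{S}_i$ is $(q-a)$-uniform. Each $H\in\mathcal{H}[T]$ is a minimal $t$-cover of $\mathcal{T}_i$. I would build a search tree of sets $T=Z_0\subsetneq Z_1\subsetneq\cdots$ with each $Z_j\subseteq H$, as follows.
\begin{itemize}
\item If $Z_j\subsetneq H$, then $Z_j$ is not a $t$-cover, by minimality of $H$. So there is $B\in\mathcal{T}_i$ with $k:=t-|B\cap Z_j|\ge 1$, and this $B$ depends only on $Z_j$.
\item Since $|B\cap H|\ge t$, the set $H$ contains at least $k$ elements of $B\setminus Z_j$.
\item We branch over all $k$-subsets of $B\setminus Z_j$. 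Since $|B\setminus Z_j|\le q-i-t+k$, the number of branches is at most
\[
\binom{q-i-t+k}{k}=\prod_{\ell=1}^{k}\frac{q-i-t+\ell}{\ell}\le (q-t-i+1)^k .
\]
\end{itemize}
Along the branch consistent with $H$ the sets stay inside $H$. The process stops exactly when $Z_j$ becomes a $t$-cover, and at that point $Z_j=H$ by minimality. The total number of added elements is therefore $|H|-t=q-a-t$, so each leaf is reached with weight at most $(q-t-i+1)^{q-t-a}$. Every $H\in\mathcal{H}[T]$ is a leaf, which gives (iv). For $\mathcal{H}\subseteq\mathcal{T}_i$ the argument is symmetric.

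\emph{Item (v).} Apply (iv) with $a=i$ to $\mathcal{X}_i$. Fix any $B\in\mathcal{T}_i$; we may assume $\mathcal{T}_i\ne\emptyset$, since otherwise the degenerate case is handled trivially. Every member of $\mathcal{X}_i$ meets $B$ in at least $t$ points, and $|B|\le q-i$. Hence
\[
|\mathcal{X}_i|\le\binom{q-i}{t}\,\Delta_t(\mathcal{X}_i)\le\binom{q-i}{t}(q-t-i+1)^{q-t-i}.
\]

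The main obstacle is the branching step in (iv). Getting the exact base $q-t-i+1$, rather than something like $q-i$, requires adding $k=t-|B\cap Z_j|$ elements at once and using $\binom{m+k}{k}\le(m+1)^k$. Adding elements one at a time would not give this bound.
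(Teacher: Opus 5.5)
Your proof is correct. The paper does not prove this lemma itself; it imports it from \cite{Wen-Lv-2026+}. Your route is the natural one for this statement: (i)--(iii) come directly from the three fingerprint axioms, and (iv) is a Frankl-type branching argument that adds all $k=t-|B\cap Z|$ missing elements of an uncovered $B$ at once, using $\binom{q-i-t+k}{k}\le(q-t-i+1)^k$. Item (v) then follows by summing over the $t$-subsets of a fixed $B\in\mathcal{T}_i$.

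Two small points would tidy the write-up. First, write the count in (iv) as the claim $|\{H\in\mathcal{H}:Z\subseteq H\}|\le(q-t-i+1)^{q-a-|Z|}$ for every set $Z$, proved by downward induction on $|Z|$. This avoids the vague ``weight'' language. It also handles cleanly the fact that one $H$ may lie on several consistent branches, which in your tree version only overcounts. Second, the degenerate case $\mathcal{T}_i=\emptyset$ in (v) deserves one line. Every member of $\mathcal{S}_i$ must then be a minimal $t$-cover of the empty family, so $\mathcal{S}_i\subseteq\{\emptyset\}$. Hence $\mathcal{X}_i=\emptyset$, since $q-i\ge t\ge1$.
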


We next prove a spread approximation theorem in which the remainder bound depends on the $t$-covering number. The proof applies the standard argument (see, e.g., \cite{Kupavskii-2026}) separately to the links $\mathcal F(X)$, where $X$ ranges over the $t$-subsets of a $t$-cover of $\mathcal F$. Then a desired approximation of $\mathcal{F}$ is obtained by lifting back each set from one of those links with the corresponding center.

\begin{theorem}\label{thmspreadapp}
	Let $1\leq t\leq q$, $m\geq t$ and $r_0\geq r\geq1$. Suppose $\Omega$ is a finite set and  $\mathcal{A}$ is a family of subsets of $\Omega$ such that  $\mathcal{A}(X)$ is weakly $(r_0,q-t+1)$-spread for every $t$-subset $X$ of $\Omega$. Then for each  $\mathcal{F}\subseteq\mathcal{A}$ with $\tau_t(\mathcal{F})\leq m$, there exists a family  $\mathcal{S}\subseteq\binom{\Omega}{\leq q}$, called a \emph{spread approximation} of $\mathcal{F}$, such that
	\begin{itemize}
		\item[\rm(i)] for every $S\in\mathcal S$, there exists
		$\mathcal{F}_S\subseteq\mathcal{F}$ such that
		$\mathcal{F}_S(S)$ is non-empty and $r$-spread;
		\item[\rm(ii)]$|\mathcal{F}\setminus\mathcal{F}[\mathcal{S}]|
		\leq\binom{m}{t}r^{q-t+1}\Delta_{q+1}(\mathcal{A})$.
	\end{itemize}
\end{theorem}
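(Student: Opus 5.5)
The plan is to reduce to the links of a minimal $t$-cover, run the standard spread-approximation iteration inside each link, and lift the results back. Fix a $t$-cover $T$ of $\mathcal F$ with $|T|\leq m$. Every $F\in\mathcal F$ satisfies $|F\cap T|\geq t$, so $F$ contains some $t$-subset $X\subseteq T$. Hence
\[
\mathcal F=\bigcup_{X\in\binom{T}{t}}\mathcal F[X].
\]
For each $X\in\binom{T}{t}$ (at most $\binom{m}{t}$ of them) we work with the link $\mathcal F(X)\subseteq\mathcal A(X)$. We produce a family $\mathcal S'_X$ of sets of size at most $q-t$ in $\Omega\setminus X$, together with an exceptional set of $\mathcal F(X)$. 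Then we put $\mathcal S=\bigcup_X\{X\cup S':S'\in\mathcal S'_X\}\subseteq\binom{\Omega}{\leq q}$.

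The core step is the standard iterative peeling inside a fixed link $\mathcal G=\mathcal F(X)$. Set $\mathcal G_0=\mathcal G$. While $|\mathcal G_j|$ is larger than the target bound, choose a maximal $S'$, under the inclusion-wise order, or equivalently one maximizing $r^{|S'|}|\mathcal G_j(S')|$, such that $|\mathcal G_j(S')|\geq r^{-|S'|}|\mathcal G_j|$. By maximality, $\mathcal G_j(S')$ is $r$-spread: if some $B$ disjoint from $S'$ had $|\mathcal G_j(S'\cup B)|>r^{-|B|}|\mathcal G_j(S')|$, then $S'\cup B$ would violate the choice. If $|S'|\leq q-t$, add $S'$ to $\mathcal S'_X$, set $\mathcal F_{X\cup S'}=\{F\in\mathcal F: F\setminus X\in\mathcal G_j[S']\}$, and continue with $\mathcal G_{j+1}=\mathcal G_j\setminus\mathcal G_j[S']$. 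Property (i) then holds by construction, since $\mathcal F_S(S)=\mathcal G_j(S')$ is non-empty and $r$-spread.

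The process stops at the first $j$ where the chosen $S'$ has $|S'|\geq q-t+1$, or where $\mathcal G_j$ is empty. In that case, taking a $(q-t+1)$-subset $S''\subseteq S'$ and using maximality again, we get
\[
|\mathcal G_j|\leq r^{|S'|}|\mathcal G_j(S')|\leq r^{q-t+1}|\mathcal G_j(S'')|.
\]
More carefully, choose $S'$ among sets of size at most $q-t+1$, which is the usual truncation. When the maximizer has size exactly $q-t+1$, this yields
\[
|\mathcal G_j|\leq r^{q-t+1}\Delta_{q-t+1}(\mathcal G_j)\leq r^{q-t+1}\Delta_{q+1}(\mathcal A).
\]
The last inequality holds because $\mathcal G_j(S'')$ corresponds to members of $\mathcal A$ containing the $(q+1)$-set $X\cup S''$.

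The main obstacle is making the truncation work: the spreadness claim for sets $S'$ of size at most $q-t$ must remain valid even though the maximization was restricted to sizes at most $q-t+1$. This is where the weak $(r_0,q-t+1)$-spreadness of $\mathcal A(X)$ with $r_0\geq r$ enters. For $|B|$ large, that is, $|S'\cup B|>q-t+1$, we bound
\[
|\mathcal G_j(S'\cup B)|\leq\Delta_{q-t+1+s}(\mathcal A(X))\leq r_0^{-s}\Delta_{q-t+1}(\mathcal A(X)),
\]
and compare this with $|\mathcal G_j(S')|\geq r^{-|S'|}|\mathcal G_j|$. The comparison uses that $|\mathcal G_j|$ still exceeds $r^{q-t+1}\Delta_{q+1}(\mathcal A)$, the termination threshold, so the large-$B$ case cannot violate spreadness. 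The remainder of link $X$ is then at most $r^{q-t+1}\Delta_{q+1}(\mathcal A)$. Since every $F\in\mathcal F\setminus\mathcal F[\mathcal S]$ lies in the remainder of at least one link $X$ (any $X\subseteq F\cap T$), summing over the at most $\binom{m}{t}$ choices of $X$ gives (ii).
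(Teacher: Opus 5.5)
Your final argument is correct and has the same architecture as the paper's proof. Both split $\mathcal F$ along the $t$-subsets of a $t$-cover of size at most $m$, run the greedy spread-peeling in each link $\mathcal F(X)$, lift back by $X$, and union-bound over at most $\binom{m}{t}$ links. The difference is where the weak-spreadness hypothesis is used.

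\emph{The paper's route.} It takes $W_i$ inclusion-maximal among \emph{all} subsets of $\Omega\setminus X$. Then $r$-spreadness of each recorded link follows from maximality alone. It stops when $|W_j|>q-t$ and uses the hypothesis only for the remainder:
\[
|\mathcal C_j|\le r^{|W_j|}\Delta_{|W_j|}(\mathcal A(X))\le r^{q-t+1}\Delta_{q-t+1}(\mathcal A(X))\le r^{q-t+1}\Delta_{q+1}(\mathcal A),
\]
using $r\le r_0$.

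\emph{Your route.} You truncate the maximization at size $q-t+1$ and run the loop while $|\mathcal G_j|>r^{q-t+1}\Delta_{q+1}(\mathcal A)$. The remainder bound is then automatic. A maximal $S'$ of size $q-t+1$ cannot occur inside the loop, since it would force $|\mathcal G_j|\le r^{q-t+1}\Delta_{q+1}(\mathcal A)$. The hypothesis is instead used for spreadness against large $B$: if $|S'|+|B|=q-t+1+s$ with $s\ge1$, then
\[
|\mathcal G_j(S'\cup B)|\le r_0^{-s}\Delta_{q-t+1}(\mathcal A(X))\le r^{-s}\Delta_{q+1}(\mathcal A)<r^{-|S'|-|B|}|\mathcal G_j|\le r^{-|B|}|\mathcal G_j(S')|.
\]
The paper's version keeps the peeling intrinsic: it needs neither the threshold nor the ambient family until the final count. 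Yours makes termination trivial but ties the procedure to the threshold. The two yield the same conclusion.

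Three points in your write-up need fixing.
\begin{itemize}
\item The intermediate claim $r^{|S'|}|\mathcal G_j(S')|\le r^{q-t+1}|\mathcal G_j(S'')|$ for $S''\subseteq S'$, ``by maximality'', is false. Maximality gives no such inequality; for a maximizer of $r^{|S'|}|\mathcal G_j(S')|$ it gives the reverse one. The correct way to close the untruncated version is the paper's bound through the ambient degree $\Delta_{|S'|}(\mathcal A(X))$ and weak spreadness, not through a subset $S''$.
\item In the truncated version the loop must stop on the threshold, not on ``the chosen $S'$ has $|S'|\ge q-t+1$ or $\mathcal G_j=\emptyset$''. The large-$B$ comparison above relies on $|\mathcal G_j|>r^{q-t+1}\Delta_{q+1}(\mathcal A)$.
\item Choosing $S'$ inclusion-maximal and choosing it to maximize $r^{|S'|}|\mathcal G_j(S')|$ are not equivalent, although each yields an $r$-spread link.
\end{itemize}
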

\begin{proof}
	Choose a $t$-cover $L$ of $\mathcal{F}$ with $|L|=\tau_t(\mathcal{F})$. Let $\mathcal{X}$ be the collection of $X\in\binom{L}{t}$ with $\mathcal{F}[X]\neq\emptyset$. Then 
	\begin{equation}\label{equthmspreadapp1}
		\mathcal{F}=\bigcup_{X\in\mathcal{X}}\mathcal{F}[X].
	\end{equation}
	We construct a family $\mathcal{S}_X\subseteq\binom{\Omega}{\leq q}$ for each $X\in\mathcal{X}$ as follows.
	
	Fix an $X\in\mathcal{X}$, and set  $\mathcal{C}_1:=\mathcal{F}(X)$. For $i\geq1$, do the following: 
	\begin{itemize}
		\item[\rm(a)]If $\mathcal{C}_i=\emptyset$, terminate. Otherwise, find an inclusion-maximal subset $W_i\subseteq\Omega\setminus X$ with $|\mathcal{C}_i[W_i]|\geq r^{-|W_i|}|\mathcal{C}_i|$.
		\item[\rm(b)]If $|W_i|>q-t$, terminate. Otherwise, record $W_i$ and put  $\mathcal{C}_{i+1}:=\mathcal{C}_i\setminus\mathcal{C}_i[W_i]$.\vspace{1em}
	\end{itemize}
Whenever $\mathcal C_{i+1}$ is defined, it is a proper
subfamily of $\mathcal C_i$, so there are at most
$|\mathcal F(X)|$ such steps.
Let $j$ be the index at which the procedure terminates, and put
	$$\mathcal{W}_X:=\{W_1,\ldots,W_{j-1}\}\;\;\mbox{and}\;\;\mathcal{S}_X:=\{W_1\cup X,\ldots,W_{j-1}\cup X\}.$$ For every $W_i\in \mathcal{W}_X$ and every non-empty
	$T\subseteq\Omega\setminus(X\cup W_i)$, the maximality of $W_i$ gives
	$$
	|(\mathcal C_i(W_i))[T]|=|\mathcal C_i[W_i\cup T]|<r^{-|W_i\cup T|}|\mathcal C_i|\leq r^{-|T|}|\mathcal C_i[W_i]|.
	$$
	Thus $\mathcal C_i(W_i)$ is $r$-spread. Lifting back by $X$, we define
	\begin{equation}\label{equthmspreadapp2}
		\mathcal{F}_{X,W_i}:=\{F\in\mathcal{F}[X]:F\setminus X\in\mathcal{C}_i[W_i]\}.
	\end{equation}
	Then $\mathcal F_{X,W_i}(X\cup W_i)=\mathcal C_i(W_i)$ is non-empty and $r$-spread. 
	
	 Next, let us estimate the part left uncovered by $\mathcal{W}_X$. It is clear that $\mathcal{C}_1=\mathcal{C}_1[\mathcal{W}_X]$ when the procedure terminates at $\mathcal{C}_j=\emptyset$. Otherwise, $|W_j|\geq q-t+1$, and hence
	\begin{align}
		|\mathcal{F}[X]\setminus\mathcal{F}[\mathcal{S}_X]|&=|\mathcal{C}_1\setminus\mathcal{C}_1[\mathcal{W}_X]|=|\mathcal{C}_j|\nonumber\\
		&\leq r^{|W_j|}|\mathcal{C}_j[W_j]|\leq r^{|W_j|}\Delta_{|W_j|}(\mathcal{A}(X))\nonumber\\
		&\leq r^{q-t+1}\Delta_{q-t+1}(\mathcal{A}(X))\leq r^{q-t+1}\Delta_{q+1}(\mathcal{A}).\label{equthmspreadapp3}
	\end{align}
	The third inequality follows from $r_0\geq r$ and the weak
	$(r_0,q-t+1)$-spreadness of $\mathcal{A}(X)$, and the last one follows from
	$|\mathcal{A}(X)[W]|=|\mathcal{A}[X\cup W]|$ whenever $|W|=q-t+1$.
	
	Let us verify that 
	$$\mathcal{S}:=\bigcup_{X\in\mathcal{X}}\mathcal{S}_X$$ 
	has the required properties. Of course $\mathcal{S}\subseteq\binom{\Omega}{\leq q}$ since this holds for every  $\mathcal{S}_X$. Next, for each $S\in\mathcal{S}$, there exist $X\in\mathcal{X}$ and $W_i\in\mathcal{W}_X$ with $S=X\cup W_i$. Then we may put $\mathcal{F}_S:=\mathcal{F}_{X,W_i}$, as defined in (\ref{equthmspreadapp2}). As shown above,  $\mathcal{F}_S(S)\neq\emptyset$ and $\mathcal{F}_S(S)$ is $r$-spread. It remains to prove (ii). We observe that
	\begin{equation}\label{equthmspreadapp4}
		\mathcal{F}\setminus\mathcal{F}[\mathcal{S}]\subseteq\bigcup_{X\in\mathcal{X}}(\mathcal{F}[X]\setminus\mathcal{F}[\mathcal{S}_X]).
	\end{equation}
	To see this, let $F\in\mathcal{F}\setminus\mathcal{F}[\mathcal{S}]$. Then by (\ref{equthmspreadapp1}), $F\in\mathcal{F}[X]$ for some $X\in\mathcal{X}$. Moreover, $F$ cannot contain $X\cup W$ for
	any $W\in\mathcal W_X$, since $X\cup W\in\mathcal S$. Thus
	$F\notin\mathcal F[\mathcal S_X]$, and this proves (\ref{equthmspreadapp4}). Finally, using $|\mathcal{X}|\leq\binom{\tau_t(\mathcal{F})}{t}\leq\binom{m}{t}$ and the estimate (\ref{equthmspreadapp3}) with $X$ ranging over $\mathcal{X}$, we derive
	$$|\mathcal{F}\setminus\mathcal{F}[\mathcal{S}]|\leq\binom{m}{t}r^{q-t+1}\Delta_{q+1}(\mathcal{A}).$$
	This finishes the proof.
\end{proof}
We now apply Theorem \ref{thmspreadapp} to a pair of cross $t$-intersecting
families of permutations.
To show that the resulting approximations are cross
$t$-intersecting, we use the following form of the Spread Lemma. For $0\leq p\leq1$, a \emph{$p$-random subset} of a finite
set $\Omega$ is obtained by including each element of
$\Omega$ independently with probability $p$.

\begin{theorem}[Spread Lemma, \cite{Tao,Stoeckl}]\label{spreadlemma}
	If $\mathcal{F}\subseteq\binom{[n]}{\leq k}$ is $r$-spread and $W$ is an $(m\delta)$-random subset of $[n]$, then 
	$$\mathbb{P}[F\subseteq W\;\mbox{for some}\; F\in\mathcal{F}]\geq1-\left(\frac{1+H(\delta)}{\log_2(r\delta)}\right)^mk,$$
	where $H(\delta)=-\delta\log_2\delta
	-(1-\delta)\log_2(1-\delta).$
\end{theorem}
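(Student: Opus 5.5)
The plan is to follow the iterative ``fragment-shrinking'' argument of Alweiss, Lovett, Wu and Zhang, in the refined form of Rao, Tao and Stoeckl.

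\emph{Step 1 (splitting the random set).} Couple the $(m\delta)$-random set $W$ with $W_1\cup\cdots\cup W_m$, where the $W_j$ are independent $\delta$-random subsets. Each element lies in this union with probability $1-(1-\delta)^m\leq m\delta$. The event ``some $F\in\mathcal F$ lies in $W$'' is monotone, so it suffices to prove the bound for $W_1\cup\cdots\cup W_m$.

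\emph{Step 2 (spreadness as a measure).} Work with a probability measure rather than the family. Let $\mu$ be uniform on $\mathcal F$. Then $r$-spreadness says $\mu(S\subseteq F)\leq r^{-|S|}$ for all $S$, and $|F|\leq k$.

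\emph{Step 3 (one-step lemma).} This is the core. Let $\mu$ be a measure on sets of size at most $k$ with $\mu(S\subseteq F)\leq r^{-|S|}$ for all $S$, and let $W'$ be $\delta$-random. We want a coupled pair $(F,F')$ with the following properties:
\begin{itemize}
\item $F\sim\mu$ and $F'\in\operatorname{supp}\mu$;
\item $F'\setminus W'\subseteq F$;
\item $\mathbb E|F'\setminus W'|\leq\beta\,\mathbb E|F|$, where $\beta=\frac{1+H(\delta)}{\log_2(r\delta)}$.
\end{itemize}
Moreover, the law $\mu'$ of the fragment $F'\setminus W'$ should again satisfy $\mu'(S\subseteq\cdot)\leq r^{-|S|}$.

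The spreadness of $\mu'$ is inherited because every fragment is contained in the original $F\sim\mu$. The natural choice of $F'$ is the member minimizing $|F'\setminus W'|$ among those with $F'\subseteq F\cup W'$.

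The expectation bound is the real content. I would prove it by Tao's entropy computation. Compare $H(F\cup W')$ with $H(W')+H(F)$. Encoding $(W',F)$ via $F\cup W'$ together with the position of the fragment inside it costs at most $(1+H(\delta))$ bits per element of $F$. On the other hand, spreadness forces the fragment to carry at least $\log_2(r\delta)$ bits per element, since $\mu(S\subseteq F)\leq r^{-|S|}$ bounds $H(F\mid F\setminus S)$ from below. Comparing the two counts gives $\mathbb E|\text{fragment}|\cdot\log_2(r\delta)\leq(1+H(\delta))\,\mathbb E|F|$. If $r\delta\leq 2$, the claimed bound is trivial, so we may assume $\beta<1$.

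\emph{Step 4 (iteration).} Apply Step 3 successively with $W_1,\dots,W_m$, each time to the current fragment measure. After $j$ rounds we have a random set $F_j$ with the following properties:
\begin{itemize}
\item $F_j$ is the fragment of some member of $\mathcal F$ that is covered by $W_1\cup\cdots\cup W_j$ outside $F_j$;
\item $F_j$ still satisfies the spread bound;
\item $\mathbb E|F_j|\leq\beta^j k$.
\end{itemize}
When $F_m=\emptyset$, some $F\in\mathcal F$ lies in $W_1\cup\cdots\cup W_m$. Markov's inequality then gives $\mathbb P[F_m\neq\emptyset]\leq\mathbb E|F_m|\leq\beta^m k$, which is the stated bound.

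The main obstacle is Step 3. One must define the coupling so that the fragment both shrinks in expectation by exactly the factor $\beta$ and retains the $r$-spread property needed to iterate. I would first try Tao's entropy formulation, because it handles the minimization over $F'$ cleanly. If constants fail, the fallback is Stoeckl's direct counting of pairs $(W',F)$.
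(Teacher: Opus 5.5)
The paper does not prove this statement; it quotes it from Tao and Stoeckl, so the comparison is with those proofs. Your outer architecture is the standard one: couple $W$ with $W_1\cup\dots\cup W_m$, prove a one-step fragment bound in expectation, iterate, and finish with Markov. Steps 1, 2 and 4 are sound, provided Step 4 is run \emph{conditionally} on $W_1,\dots,W_{j-1}$.

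\begin{itemize}
\item In round $j$ the candidates must be the sets $G\setminus(W_1\cup\dots\cup W_{j-1})$, $G\in\mathcal F$, for the realised $W$'s.
\item The unconditional fragment law is spread, but its support contains $G\setminus w$ for other realisations $w$. A successor chosen there need not give a member of $\mathcal F$ inside $W_1\cup\dots\cup W_m$.
\item The conditional spread bound does hold. This is because $F\sim\mu$ is independent of all $W_j$ and the fragments are nested subsets of $F$.
\end{itemize}

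The genuine gap is Step 3, which carries all the content; the mechanism you sketch does not give the constant $\beta$.

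\begin{itemize}
\item Encoding $(W',F)$ through $F\cup W'$ puts the cost in the wrong place. Since $\mathbb E|F\cup W'|=n\delta+(1-\delta)\mathbb E|F|$, Gibbs' inequality against the $\delta$-random law gives $H(F\cup W')\le H(W')+(1-\delta)\mathbb E|F|\log_2\frac{1-\delta}{\delta}$. This is essentially tight for very spread families such as all $k$-sets, so the $\log_2(1/\delta)$ price is paid per element of $F$, not per element of the fragment.
\item Spreadness is also misdescribed. What it gives is $H(F\mid S)\le H(F)-\mathbb E|S|\log_2 r$ for a random $S\subseteq F$: an upper bound with $\log_2 r$, not a lower bound on $H(F\mid F\setminus S)$ with $\log_2(r\delta)$.
\item Balancing these yields only $\mathbb E|\mathrm{frag}|\le\frac{1+\log_2(1/\delta)}{\log_2 r}\,\mathbb E|F|$. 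For fixed $r\delta$ this tends to $1$ as $\delta\to0$.
\item Using the minimiser alone does not repair this. Minimality does imply that every candidate inside $W'\cup T$ contains $T=F'\setminus W'$, so one can encode $W'\cup T$ and get the right denominator. But locating $T$ then costs the size of some candidate inside $W'\cup T$, i.e.\ up to $k$ bits rather than $\mathbb E|F|$. A bound in terms of $k$ does not iterate to $\beta^m k$.
\end{itemize}

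What is missing is a comparison of two encodings of a triple built by conditional resampling. Let $F^*$ be a conditionally independent copy of $F$ given $U=F\cup W'$. Then $F^*\sim\mu$, $F^*\subseteq F\cup W'$, and $T:=F^*\setminus W'\subseteq F$.

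\begin{itemize}
\item \textbf{Lower bound.} We have $H(W',F,F^*)=H(W')+H(F)+H(F\mid U)$. Moreover $H(F\mid U)=H(F)+H(W')-H(U)-H(\delta)\mathbb E|F|\ge H(F)-\mathbb E|F|\log_2\frac1\delta$, where $H(\delta)$ is the binary entropy.
\item \textbf{Upper bound.} The triple $(W',F,F^*)$ is determined by $(W'\cup F^*,F^*,T,F)$, since $W'=(W'\cup F^*)\setminus T$. Because $W'\cup F^*=W'\cup T$ exceeds $W'$ only by $T$, Gibbs gives $H(W'\cup F^*,F^*)\le H(F)+H(W')-\mathbb E|F|\log_2\frac1\delta+\mathbb E|T|\log_2\frac{1-\delta}{\delta}$. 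In addition, $H(T\mid F^*)\le\mathbb E|F|$ and $H(F\mid T)\le H(F)-\mathbb E|T|\log_2 r$.
\end{itemize}

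The $\mathbb E|F|\log_2\frac1\delta$ terms cancel, leaving
\[
\mathbb E|T|\,\log_2\frac{r\delta}{1-\delta}\le\mathbb E|F|.
\]
This implies your one-step bound with $\beta$ whenever $r\delta>1$. The minimiser's fragment is no larger, so your spread inheritance and the conditional iteration then go through.
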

In the breakthrough paper \cite{sunflower}, a key observation concerning the intersection problem from \cite[Theorem 4.2]{sunflower} is that a sufficiently spread (a `pseudorandom' property) family does contain two disjoint members (a specific one), and consequently cannot be $1$-intersecting. A similar observation also plays a crucial role in the spread approximation method. Combining Theorem \ref{thmspreadapp} with the Spread Lemma, we obtain the following approximation theorem. Its conclusion will allow us to apply the peeling procedure to the approximating pair in Section~\ref{sec3}.
 By a slight abuse of notation, the empty family is cross $t$-intersecting with every family.
\begin{theorem}\label{thmcrossspreadapp}
Let $t\geq1$ and $n\geq 400t$. Let 
	$\mathcal F,\mathcal G\subseteq S_n$ be cross
	$t$-intersecting. Then there exist families $\mathcal{S}$ and $\mathcal{T}$, each consisting of partial
	permutations of size at most $t+\left\lceil3\ln\binom{n}{t}\right\rceil$, such that
	\begin{itemize}
		\item[\rm(i)] $\max\big\{|\mathcal F\setminus\mathcal F[\mathcal{S}]|,\,|\mathcal{G}\setminus\mathcal G[\mathcal T]|\big\}\leq\binom{n}{t}^{-2}(n-t)!$, and 
		\item[\rm(ii)] $\mathcal S$ and $\mathcal T$ are cross
		$t$-intersecting.
	\end{itemize}
\end{theorem}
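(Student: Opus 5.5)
We will apply Theorem \ref{thmspreadapp} with $\Omega=[n]^2$, $\mathcal{A}=S_n$, $q=t+\lceil 3\ln\binom{n}{t}\rceil$, and a spreadness parameter $r$ to be chosen. The first task is to bound the $t$-covering numbers. If $\mathcal{G}=\emptyset$, put $\mathcal{S}=\mathcal{T}=\emptyset$. Otherwise, any $\tau\in\mathcal{G}$, seen as an $n$-set of pairs, is a $t$-cover of $\mathcal{F}$, so $\tau_t(\mathcal{F})\leq n$; symmetrically $\tau_t(\mathcal{G})\leq n$. We therefore take $m=n$. For every $t$-subset $X$ that is a partial permutation, $S_n(X)\cong S_{n-t}$ (if $X$ is not a partial permutation the link is empty). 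By Lemma \ref{lemmasnspread} this link is weakly $(r_0,q-t+1)$-spread with $r_0=(n-t-(q-t+1))/e$. Also $\Delta_{q+1}(S_n)=(n-q-1)!$.

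Theorem \ref{thmspreadapp} then gives $\mathcal{S},\mathcal{T}$ with remainders at most $\binom{n}{t}r^{q-t+1}(n-q-1)!$. We need this to be at most $\binom{n}{t}^{-2}(n-t)!$. Using \eqref{equfractorial}, $(n-t)!/(n-q-1)!\geq((n-t)/e)^{q-t+1}$, so it suffices that
\[
\binom{n}{t}^3\leq\Bigl(\frac{n-t}{er}\Bigr)^{q-t+1}.
\]
Since $q-t+1\geq 3\ln\binom{n}{t}$, this holds as soon as $(n-t)/(er)\geq e$, i.e. $r\leq (n-t)/e^2$. We also need $r\leq r_0$. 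Here $q=O(t\ln(n/t))$, and for $n\geq 400t$ this is at most roughly $0.1n$ after checking constants, so $r_0\approx 0.9n/e$. With $r$ of order $n/e^2$ or somewhat smaller, both constraints hold. This gives (i).

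For (ii), suppose $S\in\mathcal{S}$ and $T\in\mathcal{T}$ have $|S\cap T|<t$. We use the families $\mathcal{F}_S$ and $\mathcal{G}_T$ from Theorem \ref{thmspreadapp}(i): their links $\mathcal{F}_S(S)$ and $\mathcal{G}_T(T)$ are nonempty and $r$-spread. Let $U=S\cup T$, of size at most $2q$. Restrict each link to members disjoint from $U$ and avoiding the rows and columns used by $U$. This keeps both families $(r/2)$-ish spread: the removed part is at most $2q\cdot r^{-1}$ times the family, which is small once $r\gg q$. The spread property survives restriction, up to a constant factor.

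Next, split the remaining ground set $[n]^2\setminus U$ into two halves by a uniformly random coloring. Each half is a $1/2$-random subset. Apply the Spread Lemma (Theorem \ref{spreadlemma}) with $m\delta=1/2$, e.g. $\delta=1/(2m)$ with suitable $m$, and $k=n$. Since $r$ is linear in $n$ and $n\geq 400t$ is large, $\log_2(r\delta)$ is large and the failure probability is below $1/2$ for each family. Hence with positive probability there exist $F'\in\mathcal{F}_S(S)$ inside the first color class and $G'\in\mathcal{G}_T(T)$ inside the second, so $F'\cap G'=\emptyset$. Then $F=S\cup F'\in\mathcal{F}$ and $G=T\cup G'\in\mathcal{G}$ satisfy
\[
|F\cap G|\leq|S\cap T|+|S\cap G'|+|T\cap F'|+|F'\cap G'|<t,
\]
because we arranged $F',G'$ to be disjoint from $U$. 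This contradicts cross $t$-intersection.

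The main obstacle is the Spread Lemma step: $k=n$ is large, so the bound $\bigl(\frac{1+H(\delta)}{\log_2(r\delta)}\bigr)^m n$ requires $m$ to be about $\log n$ while $r\delta\approx r/\log n$ stays large. With $r=\Theta(n)$ this works only for $n$ large, and small cases may need care. A possible fix is to restrict to the $n-|{\rm rows}(U)|$ coordinates, to exploit the fact that permutation links have a uniform set size, or to choose $r=n/e^2$ precisely and verify the numerics for $n\geq400$. The constants in $q\leq cn$ for $n\geq400t$ (via $\ln\binom{n}{t}\leq t\ln(en/t)$) also have to be checked.
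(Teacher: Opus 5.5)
Your outline matches the paper's proof. You apply Theorem \ref{thmspreadapp} with $\Omega=[n]^2$, $\mathcal A=S_n$, $m=n$. For $|S\cap T|<t$ you discard the members of $\mathcal F_S(S)$ meeting $T\setminus S$, and of $\mathcal G_T(T)$ meeting $S\setminus T$. A random $2$-colouring with the Spread Lemma then gives disjoint members and a contradiction.

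\textbf{The gap is the quantitative step.} This step is the real content of the statement, since a version for $n\ge Ct$ with a large $C$ is standard. You leave it as ``the main obstacle'', and your parameter choices cannot close it.
\begin{itemize}
\item Bounding the remainder via \eqref{equfractorial} costs a factor $e$ in $r$ and forces $r\le (n-t)/e^2$.
\item You then weaken the restricted links further to ``$(r/2)$-ish'' spread.
\item Optimizing over $m,\delta$ with $m\delta\le 1/2$, Theorem \ref{spreadlemma} pushes the failure probability below $1/2$ for sets of size about $n$ only when the spreadness $r'$ is roughly $16\ln(2n)$ or more. At $n=400$ that is about $110$.
\item For $t=1$, $n=400$ you have $q=19$ and $r'\le r\le 399/e^2<55$.
\item Halving is also too lossy on its own: even the paper's $r\approx 140$ would drop to about $70$.
\end{itemize}
So your argument gives the theorem only for $n\ge Ct$ with $C$ noticeably larger than $400$. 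Your suggested remedy, ``take $r=n/e^2$ and verify the numerics'', does not verify.

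\textbf{What is needed.} Use the largest admissible spreadness, $r=r_0=(n-q-1)/e$, and estimate the remainder directly. Since $e^{q-t}\ge\binom nt^3$,
\[
\frac{r^{q-t+1}(n-q-1)!}{(n-t)!}=\prod_{i=1}^{q-t+1}\frac{n-q-1}{e(n-q-1+i)}<e^{-(q-t+1)}\le e^{-1}\binom nt^{-3},
\]
so the remainder $\binom nt r^{q-t+1}(n-q-1)!$ is below $\binom nt^{-2}(n-t)!$.

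In the restriction step, keep the loss additive. Members of $\mathcal H=\mathcal F_S(S)$ already avoid $S$, so at most $|T\setminus S|\le q$ elements are forbidden. Hence the restricted family $\mathcal H'$ satisfies $|\mathcal H'|\ge(1-q/r)|\mathcal H|$ and is $(r-q)$-spread.

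Using $q\le t(5+3\ln(n/t))$, one verifies $n\ge16q$ and $r-q>17\ln(2n)$ for all $n\ge400t$. This is tight: at $t=1$, $n=400$ one has $r-q\approx121$ against $17\ln 800\approx114$. With $m=\lceil2\ln(2n)\rceil$ and $\delta=1/(2m)$, one checks $\delta^{-1}2^{(1+H(\delta))(2n)^{1/m}}<17\ln(2n)$, which makes each failure probability less than $1/2$.

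\textbf{Two smaller slips.}
\begin{itemize}
\item Requiring the restricted members to avoid the \emph{rows and columns} of $U$ is impossible for permutation links, because each $F\setminus S$ occupies every row outside $S$. It is also unnecessary: disjointness from $T\setminus S$ (resp.\ $S\setminus T$) already gives $|F\cap G|=|S\cap T|$.
\item If $\mathcal G=\emptyset$, taking $\mathcal S=\emptyset$ violates (i) when $\mathcal F$ is large. Instead take $\mathcal T=\emptyset$ and, say, $\mathcal S=\{\{(1,j)\}:j\in[n]\}$.
\end{itemize}
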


\begin{proof}
Put $
q:=t+\left\lceil3\ln\binom{n}{t}\right\rceil$ and $r=r_0=(n-q-1)/e$. We first verify that 
$$n\geq16q\;\;\mbox{and}\;\;r-q>17\ln(2n).$$ 
Indeed, write $f(x)=5+3\ln x$. Since $\binom{n}{t}\leq(en/t)^t$, we have
$q\leq4t+3t\ln(n/t)+1\leq tf(n/t)$. By a routine calculation, we have $x\geq16f(x)$ for $x\geq400$. So we obtain $n\geq16q$ by replacing $x$ with $n/t$. Next, $\frac{r-q}{t}
\geq\frac{n/t-1}{e}-(1+\frac{1}{e})f(n/t)$, and it is also routine to check that $g(x):=\frac{x-1}{e}-(1+\frac{1}{e})f(x)-17\ln (2x)$ satisfies $g(x)\geq g(400)>0$ for $x\geq400$. Hence $r-q>17t\ln(2n/t)\geq17\ln(2n)$, as required.
	
Since $\mathcal{F}$ and $\mathcal{G}$ are
	cross $t$-intersecting, each member of $\mathcal{G}$ is a $t$-cover of $\mathcal{F}$, and vice versa. Hence both families have $t$-covering number at most $n$. For every $t$-partial
	permutation $X$, the link $S_n(X)$ is naturally identified with $S_{n-t}$. Hence, by Lemma \ref{lemmasnspread}, it is weakly
	$(r_0,q-t+1)$-spread.	We may therefore apply Theorem \ref{thmspreadapp} with $\Omega=[n]^2$, $\mathcal{A}=S_n$ and $m=n$. By applying the theorem to $\mathcal{F}$ and $\mathcal{G}$, respectively, we get two families $\mathcal{S},\mathcal{T}\subseteq\binom{\Omega}{\leq q}$ with
	\begin{itemize}
		\item[\rm(a)] for all $S\in\mathcal{S}$ ($T\in\mathcal{T}$), there exists $\mathcal{F}_S\subseteq\mathcal{F}$ ($\mathcal{G}_T\subseteq\mathcal{G}$) such that $\mathcal{F}_S(S)$ ($\mathcal{G}_T(T)$) is non-empty and $r$-spread;
		\item[\rm(b)]$\max\{|\mathcal{F}\setminus\mathcal{F}[\mathcal{S}]|,|\mathcal{G}\setminus\mathcal{G}[\mathcal{T}]|\}\leq\binom{n}{t}r^{q-t+1}\Delta_{q+1}(\mathcal{A})=\binom{n}{t}r^{q-t+1}(n-q-1)!$.
	\end{itemize}
	Since each link $\mathcal{F}_S(S)$ or $\mathcal{G}_T(T)$ is non-empty, the families $\mathcal{S}$ and $\mathcal{T}$ consist of partial permutations. To prove (i), it suffices to estimate the bound given in (b). By $\binom{n}{t}\leq(en/t)^t$ and $q-t=\left\lceil3\ln\binom{n}{t}\right\rceil$, we have $e^{q-t}\geq\binom{n}{t}^3$. It follows that
	\begin{align*}
r^{q-t+1}(n-q-1)!/(n-t)!&=\prod_{i=1}^{q-t+1}\frac{n-q-1}{e(n-q-1+i)}<e^{-(q-t+1)}\leq e^{-1}\binom{n}{t}^{-3}.
	\end{align*}
So the remainder $\binom{n}{t}r^{q-t+1}(n-q-1)!$ is less than $\binom{n}{t}^{-2}(n-t)!$.
	
It remains to prove that $\mathcal S$ and $\mathcal T$ are cross
	$t$-intersecting. We prove a key claim.
\begin{claim}\label{claimcrossspreadapp1}
	Any two $(r-q)$-spread families in $\binom{\Omega}{\leq n}$ are not cross-intersecting.
\end{claim}
\begin{proof}
	Suppose that $\mathcal{G}_1,\mathcal{G}_2\subseteq\binom{\Omega}{\leq n}$ are $(r-q)$-spread. Pick a random $2$-partition $\Omega=W_1\cup W_2$ by assigning each element of $\Omega$ independently and uniformly to $W_1$ or $W_2$. Now every $W_i$ has the same distribution as a $(1/2)$-random subset of $[n]^2$. We are going to apply the Spread Lemma with  $m=\left\lceil2\ln(2n)\right\rceil$ and $\delta=(2m)^{-1}$. Since $n\geq400t$, we have $m\geq14$ and $H(\delta)\leq H(1/28)<0.23$. It is routine to check that the function $h(x):=2x\cdot2^{1.23e^{1/x}}$ is increasing on $[2,2.15]$. Moreover, $h(2.15)<16.712<17$. Then 
	\[
	\delta^{-1}\cdot2^{(1+H(\delta))(2n)^{1/m}}<2m\cdot2^{1.23e^{\ln(2n)/m}}=(\ln(2n))h(m/\ln(2n))<17\ln(2n)<r-q,
	\]
where the second inequality follows from the fact that $m/\ln(2n)\leq2+1/(\ln(2n))<2.15$ for $n\geq400$. Hence
	\[
	\left(\frac{1+H(\delta)}{\log_2((r-q)\delta)}\right)^m\cdot n
	<ne^{-\ln(2n)}=\frac1{2}.
	\]
	Then Theorem \ref{spreadlemma} gives
	$$\mathbb{P}[G\subseteq W_i\;\mbox{for some}\;G\in\mathcal{G}_i]\geq1-\left(\frac{1+H(\delta)}{\log_2((r-q)\delta)}\right)^m\cdot n>1-\left(\frac{1}{2n}\right)\cdot n=\frac{1}{2}$$
	for $i=1,2$. Therefore there is a specified $2$-partition $W_1, W_2$ together with $G_1\in\mathcal{G}_1,G_2\in\mathcal{G}_2$ such that $G_i\subseteq W_i$ for $i=1,2$, and certainly $G_1\cap G_2=\emptyset$.
\end{proof}
Assume to the contrary that $\mathcal{S}$ and $\mathcal{T}$ are not cross $t$-intersecting. Then $|A_1\cap A_2|<t$ for some $A_1\in\mathcal{S}$ and $A_2\in\mathcal{T}$. Set $\mathcal{H}_1=\mathcal{F}_{A_1}(A_1)$ and $\mathcal{H}_2=\mathcal{G}_{A_2}(A_2)$. By (a), both $\mathcal{H}_1$ and $\mathcal{H}_2$ are $r$-spread. Put
$$\mathcal{H}_1'=\{H\in\mathcal{H}_1: H\cap(A_2\setminus A_1)=\emptyset\}\;\;\mbox{and}\;\;\mathcal{H}_2'=\{H\in\mathcal{H}_2: H\cap(A_1\setminus A_2)=\emptyset\}.$$
From $|A_2|\leq q$ and the $r$-spreadness of $\mathcal{H}_1$,
$$
|\mathcal{H}_1\setminus\mathcal{H}_1'|\leq
\sum_{x\in A_2\setminus A_1}|\mathcal{H}_1[\{x\}]|
\leq|A_2|\cdot(r^{-1}|\mathcal{H}_1|)\leq\frac{q}{r}\cdot|\mathcal{H}_1|.$$
Hence $|\mathcal{H}_1'|\geq(1-\frac{q}{r})|\mathcal{H}_1|$. Then $\mathcal{H}_1'$ is $(r-q)$-spread.  More precisely, for every non-empty  subset $T$ of $\Omega\setminus A_1$, we have
$$|\mathcal{H}_1'[T]|\leq|\mathcal{H}_1[T]|\leq r^{-|T|}\cdot|\mathcal{H}_1|\leq(1-q/r)^{-1}r^{-|T|}\cdot|\mathcal{H}_1'|\leq(r-q)^{-|T|}|\mathcal{H}_1'|.$$
Hence $\mathcal{H}_1'$ is $(r-q)$-spread. Similarly, $\mathcal{H}_2'$ is $(r-q)$-spread as well. By Claim \ref{claimcrossspreadapp1}, there are $H_1\in\mathcal{H}_1'$ and $H_2\in\mathcal{H}_2'$ such that $H_1\cap H_2=\emptyset$. However, for $F:=H_1\cup A_1\in\mathcal{F}$ and  $G:=H_2\cup A_2\in\mathcal{G}$, 
$$|F\cap G|=|A_1\cap A_2|<t,$$
contrary to the assumption that $\mathcal{F}$ and $\mathcal{G}$ are cross  $t$-intersecting. This proves (ii).
\end{proof}

\section{Estimating families via fingerprints}\label{sec3}
\subsection{Counting lemmas}
For simplicity, let us define two expressions. First, for $1\leq a\leq m$, define
\begin{equation}\label{equfund}
d(m,a):=\sum_{i=0}^{a}(-1)^i(m-i)!\binom{a}{i}.
\end{equation}
By the inclusion-exclusion principle, $d(m,a)$ counts the number
of permutations of $[m]$ avoiding a fixed partial permutation of
size $a$. In particular, $d(m,m)=d_m$ is the $m$-th  derangement number. We also define
\begin{equation}\label{equsizeh}
h(n,t):=(n-t)!-d_{n-t}-d_{n-t-1}.
\end{equation}
Let us record several facts about these two expressions.
They follow readily from \eqref{equfund}, their combinatorial interpretation, and 
the Bonferroni inequalities.
\begin{lemma}\label{lemmadrgmt}
For $m\geq3$ and $1\leq a\leq m$, the following hold.
\begin{itemize}
\item[\rm(i)]$d(m,1)\geq d(m,2)\geq\cdots\geq d(m,m)=d_m$.
\item[\rm(ii)]$0.33m!\leq d_m\leq0.38m!$ and $0.6(m-t)!\leq h(m,t)\leq0.66(m-t)!$ for $m\geq t+20$.
\end{itemize}
\end{lemma}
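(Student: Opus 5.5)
We need to prove Lemma \ref{lemmadrgmt}: (i) the monotonicity of $d(m,a)$ in $a$; (ii) the numerical bounds on $d_m$ and $h(m,t)$.

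For (i) we use the combinatorial interpretation. Fix the partial permutation $P_a=\{(1,1),\ldots,(a,a)\}$. Since the symmetric group acts transitively on partial permutations of a given size, we may take this one, and then $d(m,a)$ counts permutations of $[m]$ with no fixed point in $[a]$. Such a set is decreasing in $a$: a permutation with no fixed point in $[a+1]$ has none in $[a]$. Hence $d(m,a)\geq d(m,a+1)$, and $d(m,m)=d_m$ is immediate from \eqref{equfund}. This needs no assumption on $m$ beyond $a\le m$.

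For the bound on $d_m$, write $d_m/m!=\sum_{i=0}^m(-1)^i/i!$. This is an alternating series with decreasing terms, so the Bonferroni (truncation) inequalities apply. Truncating after $i=3$ gives $d_m/m!\geq 1-1+1/2-1/6=1/3>0.33$ for $m\geq3$. Truncating after $i=4$ gives $d_m/m!\leq 1/2-1/6+1/24=0.375\leq0.38$ for $m\geq4$; for $m=3$ we check directly that $d_3/3!=1/3$. I would record the sharper form $|d_m/m!-e^{-1}|\leq 1/(m+1)!$, which drives the next step.

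For $h(m,t)$, put $k=m-t\geq20$. Then
\[
h(m,t)/k! \;=\; 1-d_k/k!-\tfrac1k\, d_{k-1}/(k-1)!.
\]
Both ratios $d_k/k!$ and $d_{k-1}/(k-1)!$ lie within $1/20!$ of $e^{-1}\approx0.36788$. Hence $h/k!$ lies within a negligible error of $1-e^{-1}(1+1/k)$. For $k\geq20$ this quantity sits between $1-0.36788\cdot1.05\approx0.6137$ and $1-0.36788\approx0.6321$. Both endpoints fall inside $[0.6,0.66]$, with room to absorb the error terms.

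The only mild obstacle is bookkeeping: keeping the truncation error terms explicit so that the constants $0.33$, $0.38$, $0.6$ and $0.66$ are verified rigorously rather than asymptotically. The margins above are generous, so this goes through.
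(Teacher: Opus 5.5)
Your proof is correct and follows the paper's approach; the paper only sketches this lemma, citing the combinatorial interpretation of $d(m,a)$ as the number of permutations avoiding a fixed partial permutation of size $a$ for (i), and the Bonferroni truncations of the inclusion--exclusion sum for (ii). Your alternating-series bounds $1/3\le d_m/m!\le 0.375$ and the estimate $|d_k/k!-e^{-1}|\le 1/(k+1)!$, applied to $h(m,t)/(m-t)!=1-d_k/k!-\frac1k\,d_{k-1}/(k-1)!$ with $k=m-t\ge 20$, are exactly those truncations worked out explicitly.
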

\begin{lemma}\label{lemmaind}
Let $t\geq1$, $n\geq 400t$ and $q=t+\left\lceil3\ln\binom{n}{t}\right\rceil$. Let $\mathcal{D}\subseteq S_n$, and let $X$ and $L$ be partial permutations of $[n]$ of size $t$ and $\ell$, respectively. Suppose that $L$ is a $t$-cover of $\mathcal{D}$ and $|X\cap L|=:s<t$.
\begin{itemize}
\item[\rm(i)]If $\ell\leq q$, then $|\mathcal{D}[X]|\leq q(n-t-1)!$.
\item[\rm(ii)]If $\ell=n$, namely, $L\in S_n$, then $|\mathcal{D}[X]|\leq h(n,t)$, with equality precisely if $s=t-1$ and  $\mathcal{D}[X]$ consists of all $F\in S_n$ with $X\subseteq F$ and $F\cap(L\setminus X)\neq\emptyset$. Moreover, $|\mathcal{D}[X]|\leq0.5(n-t)!$ for $s\leq t-2$.

\item[\rm(iii)]Suppose $\ell=n, s=t-1$, and further $|X\cap K|=t-1$ for another $t$-cover $K\in S_n$ of $\mathcal{D}$ with $L(j)\neq K(j)$ for some $j\in[n]\setminus(X^{(1)}\cup\{k\})$, where $X^{(1)}=\{x:(x,y)\in X\}$ and $k$ is the index in $[n]\setminus X^{(1)}$ with  $X\setminus L=\{(X^{-1}(L(k)),L(k))\}$. Then $|\mathcal{D}[X]|\leq h(n,t)-d_{n-t-1}$.
\end{itemize}
\end{lemma}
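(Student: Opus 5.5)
The plan is to pass to the link of $X$ and reduce each part to a counting problem in $S_{n-t}$. Write $\mathcal{D}[X]=\{F\in S_n: X\subseteq F\}\cap\mathcal{D}$, so $|\mathcal{D}[X]|$ is at most the number of $F\supseteq X$ with $|F\cap L|\geq t$. Since $|X\cap L|=s$, each such $F$ must satisfy $|(F\setminus X)\cap(L\setminus X)|\geq t-s\geq1$. Let $L'$ be the set of pairs of $L\setminus X$ that are compatible with $X$, i.e.\ whose row is not in $X^{(1)}$ and whose column is not in the image of $X$. Then $L'$ is a partial permutation of the $(n-t)\times(n-t)$ grid identified with $S_{n-t}$, and we need $|F'\cap L'|\geq t-s$ for $F'=F\setminus X\in S_{n-t}$.

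For (i), I would use only $t-s\geq1$. A union bound over the elements of $L'$ gives at most $|L'|(n-t-1)!\leq \ell(n-t-1)!\leq q(n-t-1)!$.

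For (ii), $L\in S_n$ is a permutation, so $L'$ is a partial permutation of size $a$. Each fixed point $x\in X^{(1)}$ with $X(x)\neq L(x)$ kills at most two pairs of $L$: the pair in row $x$, and the pair in column $X(x)$. Hence $a\geq n-t-(t-s)$, and more precisely $a\geq n-s-2(t-s)=n-2t+s$. When $s=t-1$ I would check that $a\geq n-t-1$. The single mismatched $x$ removes row $x$, and column $X(x)$ is hit by $L$ at some row $k\notin X^{(1)}$, so $L'$ is $L$ restricted to $[n]\setminus(X^{(1)}\cup\{k\})$, of size $n-t-1$.

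The count of $F'$ meeting $L'$ is $(n-t)!-d(n-t,a)$, which by Lemma \ref{lemmadrgmt}(i) is largest when $a$ is largest. I then need to see that the value at $a=n-t-1$ equals $h(n,t)$. Avoiding a partial permutation of size $m-1$ in $S_m$ is counted by $d_m+d_{m-1}$, by the standard identity; this follows by extending $L'$ to a full permutation and counting the permutations avoiding all positions except possibly the last. This gives $|\mathcal{D}[X]|\leq h(n,t)$.

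Equality forces $s=t-1$ and forces $\mathcal{D}[X]$ to be everything meeting $L'$. This is the same as meeting $L\setminus X$, since incompatible pairs cannot lie in $F$. For $s\leq t-2$ we need $|F'\cap L'|\geq 2$. I would bound this by $\binom{a}{2}(n-t-2)!$ minus the Bonferroni correction, or more simply by the expected count of pairs, which is at most about $(1-2e^{-1})(n-t)!$. To be safe I would compute via Bonferroni: the probability of at least 2 fixed points of a near-full partial permutation is about $1-2/e\approx0.264<0.5$. The fact that $a$ could be smaller only decreases the count.

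For (iii), we have $\mathcal{D}[X]\subseteq\{F\supseteq X: F\cap L'\neq\emptyset,\ F\cap K'\neq\emptyset\}$, with $L'$ and $K'$ both of size $n-t-1$, where $K'$ arises from $K$ in the same way. Then $|\mathcal{D}[X]|\leq h(n,t)-|\{F': F'\cap L'\neq\emptyset,\ F'\cap K'=\emptyset\}|$. The main obstacle is to lower-bound the subtracted set by $d_{n-t-1}$. I would use the index $j$ where $L(j)\neq K(j)$ and $j\notin X^{(1)}\cup\{k\}$, so that $(j,L(j))\in L'\setminus K'$. Fixing $F'(j)=L(j)$, I then count the permutations of the remaining $(n-t-1)$-grid that avoid $K'$ restricted there. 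That restriction has at most $n-t-1$ cells, so by Lemma \ref{lemmadrgmt}(i) the count is at least $d_{n-t-1}$. Care is needed because $K'$ may have a pair in row $j$ or column $L(j)$, which removes cells and only helps. This gives the bound $h(n,t)-d_{n-t-1}$.
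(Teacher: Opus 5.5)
Your proposal is correct and follows essentially the paper's route. Like the paper, you use union bounds over the $X$-compatible cells of $L\setminus X$, the exact count $d(n-t,n-t-1)=d_{n-t}+d_{n-t-1}$ when $s=t-1$, and, for (iii), you fix the assignment at $j$ and bound the $K$-avoiding completions from below by $d_{n-t-1}$ via Lemma~\ref{lemmadrgmt}(i). Two differences are minor. Your (i) uses the cruder bound $|L'|(n-t-1)!\le\ell(n-t-1)!$, which avoids the paper's monotonicity step and the condition $n\ge2q$. For $s\le t-2$, the plain union bound $\binom{a}{2}(n-t-2)!\le\binom{n-t}{2}(n-t-2)!=0.5(n-t)!$ already gives the claim, so the Bonferroni and $1-2/e$ remarks are unnecessary. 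Note also that this separate estimate, together with $h(n,t)\ge0.6(n-t)!$, is what yields $|\mathcal{D}[X]|\le h(n,t)$ when $s\le t-2$. Monotonicity in $a$ would not suffice there, since $a$ can equal $n-t$.
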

\begin{proof}
To begin with, we note that $n\geq400t$ gives $n\geq2q$. Write $s=|X\cap L|$. Every member of $\mathcal{D}[X]$ shares at least $t$ elements with $L$, and thus intersects $L\setminus X$ in at least $t-s$ elements. So we estimate via the decomposition $$\mathcal{D}[X]=\bigcup_{H\in\mathcal{H}}\mathcal{D}[X\cup H],$$
where $\mathcal{H}$ is the collection of partial permutations of size $t-s$ of $L\setminus X$.

(i)\;When $\ell\leq q$, a routine union bound yields
\begin{align*}
	|\mathcal{D}[X]|&\leq\binom{\ell-s}{t-s}(n-(2t-s))!\leq(\ell-t+1)(n-t-1)!\leq q(n-t-1)!.
\end{align*}
To see the second inequality, just note that the function $\binom{\ell-x}{t-x}(n-(2t-x))!$ is increasing as $x\in\{0,1,\ldots,t-1\}$ increases, because $n\geq2q$.

(ii)-(iii)\;If $s\leq t-2$, then every permutation from $\mathcal{D}[X]$ must agree with $L\setminus X$ on at least two points, and hence 
$$|\mathcal{D}[X]|\leq\binom{n-t}{2}(n-t-2)!=0.5(n-t)!.$$

In what follows, we suppose $s=t-1$, and set without loss of generality that $X=\{(1,1),\ldots,(t,t)\}$ and $L=(1\ n)$. So
$$\mathcal{D}[X]\subseteq\mathcal{H}:=\{\sigma\in S_n:\sigma(i)=i\;\mbox{for all}\;i\in[t],\;\mbox{and}\;
\sigma(i)=i\text{ for some }i\in[t+1,n-1]\}.$$
 Let us count the number of $F\in S_n$ with $X\subseteq F$ and $F\cap(L\setminus X)=\emptyset$. Fix such an $F$. Then $F$ fixes $[t]$, and $F(i)\neq i$ for any $i\in[t+1,n-1]$. If $F(n)=n$, then $F$ acts as a derangement on $[t+1,n-1]$; if $F(n)\neq n$, then there are $d_{n-t}$ choices for the remaining points. Hence
\begin{equation*}
|\{F\in S_n:X\subseteq F,\;F\cap(L\setminus X)=\emptyset\}|=d_{n-t-1}+d_{n-t}.
\end{equation*}
So $|\mathcal{D}[X]|\leq|\mathcal{H}|=(n-t)!-d_{n-t}-d_{n-t-1}=h(n,t)$, and it is easy to characterize equality.

Finally, let us prove (iii). By our assumption above, $X^{(1)}=[t]$ and  $k=n$, and $K$ fixes all but one point of $[t]$. Moreover, there is a $j\in[t+1,n-1]$ with $K(j)\neq j$. To bound $|\mathcal D[X]|$ from above, we estimate the number of permutations in $\mathcal{H}$ which disagree with $K$ at every point greater than $t$. The family
$$\{\sigma\in S_n:\sigma(i)=i\;\mbox{for all}\;i\in[t]\cup\{j\},\;\sigma(i)\neq K(i)\;\mbox{for all}\;i\in[t+1,n]\setminus\{j\}\}$$
forms a collection of such permutations. After fixing
$[t]\cup\{j\}$, at most $n-t-1$ assignments
of $K$ remain feasible. By Lemma \ref{lemmadrgmt} (i), there are at least
$d_{n-t-1}$ such permutations. Hence
$$|\mathcal D[X]|\leq|\mathcal{H}|-d_{n-t-1}=h(n,t)-d_{n-t-1},$$
which proves (iii). 
\end{proof}
We next give a key estimate in the $t$-cover method.
It bounds the maximum $t$-degree of a family using a
collection of small $t$-covers with large $t$-covering number. 
Similar arguments proved to be useful for other objects (see, e.g., \cite{Cao-Lv-Wang-2021,Cao-Lv-Wang-Zhou-2022,Wen-Lv-2026}).
\begin{lemma}\label{lemmatcoverkey-perm}
	Let $\mathcal{D}\subseteq S_n$, and let $\mathcal{C}$ be a collection of partial permutations, each of size at most $\ell$, such that every member of $\mathcal{C}$ is a $t$-cover of $\mathcal{D}$. If $\tau_t(\mathcal{C})\geq m$ and $n\geq m+\ell+1$, then $$\Delta_t(\mathcal{D})\leq(\ell-t+1)^{m-t}(n-m)!.$$
\end{lemma}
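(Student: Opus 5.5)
We prove the bound by the standard branching argument for $t$-covers, tracking how many further assignments are forced at each branch. Fix a $t$-partial permutation $X$; we must show $|\mathcal D[X]|\leq(\ell-t+1)^{m-t}(n-m)!$. More generally, we will prove by downward induction on $|Y|$, for $X\subseteq Y$ with $|Y|\leq m$, that
\[
|\mathcal D[Y]|\leq(\ell-t+1)^{m-|Y|}(n-m)!.
\]
The case $Y=X$ is the lemma.

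\emph{Base case} $|Y|=m$. Here $\mathcal D[Y]$ consists of permutations containing a fixed partial permutation of size $m$. There are at most $(n-m)!$ of these, or none if $Y$ is not a partial permutation.

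\emph{Inductive step} $|Y|<m$, with $t\leq |Y|$ since $X\subseteq Y$. Every $t$-cover of $\mathcal C$ has size at least $m>|Y|$, so $Y$ is not a $t$-cover of $\mathcal C$. Hence there is $C\in\mathcal C$ with $|C\cap Y|\leq t-1$. Since $C$ is a $t$-cover of $\mathcal D$, every $F\in\mathcal D[Y]$ satisfies $|F\cap C|\geq t$. Therefore $F$ contains at least $t-|C\cap Y|\geq1$ elements of $C\setminus Y$.

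\emph{Branching.} If we branch on a single element $c\in C\setminus Y$, we get $|\mathcal D[Y]|\leq\sum_{c}|\mathcal D[Y\cup\{c\}]|$. The number of branches is $|C\setminus Y|\leq \ell-|C\cap Y|$. That factor is as large as $\ell$ when $C\cap Y=\emptyset$, which is too weak.

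To reach the factor $\ell-t+1$, we instead branch on sets of size $a:=t-|C\cap Y|$. Each $F$ contains some $a$-subset $H\subseteq C\setminus Y$, so
\[
|\mathcal D[Y]|\leq\sum_{H}|\mathcal D[Y\cup H]|,
\]
with at most $\binom{\ell-t+a}{a}$ terms. Two cases then finish the step.

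\emph{Case $|Y\cup H|\leq m$.} Induction gives each term at most $(\ell-t+1)^{m-|Y|-a}(n-m)!$. So we need
\[
\binom{\ell-t+a}{a}\leq(\ell-t+1)^a .
\]
This holds because $\binom{b+a}{a}\leq\prod_{i=1}^{a}\frac{b+i}{i}\leq(b+1)^a$, applied with $b=\ell-t$.

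\emph{Case $|Y\cup H|>m$.} Bound each term directly by $(n-|Y|-a)!$. Since $n-m\geq \ell+1$, we get $(n-|Y|-a)!\leq (n-m)!$, and then compare with $(\ell-t+1)^{m-|Y|}$. Alternatively, truncate $H$ so that the total size is exactly $m$ and use the base case; this is cleaner.

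The main obstacle is the combinatorial estimate $\binom{\ell-t+a}{a}\leq(\ell-t+1)^a$ together with the bookkeeping that $|C\setminus Y|\leq \ell-t+a$. The condition $n\geq m+\ell+1$ ensures the factorial ratios $(n-|Y|-a)!$ stay consistent with $(n-m)!$ in the truncated branches.
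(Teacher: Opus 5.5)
Your branching scheme is the paper's own. The paper builds a chain $X=X_0\subseteq X_1\subseteq\cdots$, at each step choosing $C_i\in\mathcal C$ with $s_i=|X_i\cap C_i|<t$ and paying $\binom{|C_i|-s_i}{t-s_i}\le(\ell-t+1)^{t-s_i}$. Your downward induction is that chain unrolled, and it uses the same binomial estimate.

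There is a genuine gap in your case $|Y\cup H|>m$. This is the only place where the hypothesis $n\geq m+\ell+1$ is needed, and neither option you give works as stated. Put $k:=m-|Y|$, so $1\le k<a$.

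The ``cleaner'' truncation fails. If you keep summing over the $a$-sets $H$ and bound each term by $(n-m)!$, you get $\binom{\ell-t+a}{a}(n-m)!$. If you instead branch on $k$-subsets of $C\setminus Y$, you get $\binom{\ell-t+a}{k}(n-m)!$. Both can exceed the target $(\ell-t+1)^{k}(n-m)!$. For instance, with $k=1$ and $a=t\geq2$ (so $C\cap Y=\emptyset$), the second gives $\ell$ against $\ell-t+1$, which is exactly the loss you flagged in your ``Branching'' paragraph.

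Your first option does not close either. The inequality $(n-|Y|-a)!\le(n-m)!$ holds without any hypothesis. Used that way, it discards precisely the factor $(n-m)!/(n-|Y|-a)!$ that has to absorb the surplus $(\ell-t+1)^{a-k}$ in the branching count.

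The repair is short. In this case $|\mathcal D[Y]|\le(\ell-t+1)^{a}(n-|Y|-a)!$. Put $e:=a-k=|Y|+a-m$, so $1\le e\le a-1\le t-1$. Using $n\geq m+\ell+1$,
\[
\frac{(n-m)!}{(n-|Y|-a)!}=\prod_{i=0}^{e-1}(n-m-i)\geq(n-m-t+2)^{e}\geq(\ell-t+1)^{e}.
\]
Hence $|\mathcal D[Y]|\le(\ell-t+1)^{a-e}(n-m)!=(\ell-t+1)^{m-|Y|}(n-m)!$.

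This is exactly the paper's final step. It stops the chain at the first set $X'$ of size at least $m$, notes that $m\le|X'|\le m+t-1$, and uses that $z\mapsto(\ell-t+1)^{z-t}(n-z)!$ is decreasing on that range because $n\geq m+\ell+1$. With this argument replacing your treatment of the case $|Y\cup H|>m$, your induction proves the lemma.
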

\begin{proof}
	The assertion is immediate when $m=t$. Suppose that $m\geq t+1$ and fix a partial permutation $X$ of size $t$ with $|\mathcal{D}[X]|=\Delta_t(\mathcal{D})$. Put $X_0=X$. As long as $|X_i|<m$, the set $X_i$ is not a $t$-cover of $\mathcal{C}$. Hence, there exists $C_i\in\mathcal{C}$ such that $s_i:=|X_i\cap C_i|<t$. Set 
	$$\mathcal{H}_i:= \left\{ H\in\binom{C_i\setminus X_i}{t-s_i}: X_i\cup H\;\mbox{ is a partial permutation} \right\}.$$
Every member of $\mathcal{D}[X_i]$ contains at least $t-s_i$ elements of $C_i\setminus X_i$. So for some $H_i\in\mathcal{H}_i$,
	$$
	|\mathcal{D}[X_i]|
	\leq
	\binom{|C_i|-s_i}{t-s_i}
	|\mathcal{D}[X_i\cup H_i]|
	\leq
	(\ell-t+1)^{t-s_i}
	|\mathcal{D}[X_i\cup H_i]|.
	$$
	
	Set $X_{i+1}=X_i\cup H_i$ and continue the procedure until the first index $a$ for which $|X_a|\geq m$. Since $t-s_i=|X_{i+1}|-|X_i|$ for $i\leq a-1$, we obtain
	$$
	|\mathcal{D}[X]|
	\leq
	(\ell-t+1)^{|X_a|-t}|\mathcal{D}[X_a]|
	\leq
	(\ell-t+1)^{|X_a|-t}(n-|X_a|)!.
	$$
	Moreover, $
	m\leq |X_a|\leq m+t-1$, as $|X_{a-1}|<m$ and $|X_a|=|X_{a-1}|+t-s_{a-1}$ by our choice of $a$. 
	For $m\leq z\leq m+t-1$, the function $
	(\ell-t+1)^{z-t}(n-z)!$ is decreasing, since $n\geq m+\ell+1$. Consequently, $\Delta_t(\mathcal{D})=|\mathcal{D}[X]|\leq(\ell-t+1)^{m-t}(n-m)!$.
\end{proof}
\subsection{Estimates for peeling layers}
We now fix the notation used throughout the remainder of this section.
\begin{assumption}\label{assumption}
	Let $t\geq1$, $n\geq Ct$ and $q=t+\left\lceil3\ln\binom{n}{t}\right\rceil$, where $C:=400$. In particular, $n\geq16q$. Suppose that $(\mathcal F,\mathcal G)$ is a non-trivial pair of cross $t$-intersecting families in $S_n$. Let $(\mathcal{S},\mathcal{T})$ be the pair given by applying Theorem \ref{thmcrossspreadapp} to $(\mathcal{F},\mathcal{G})$. Further, if  $\mathcal{S},\mathcal{T}\neq\emptyset$, apply Algorithm \ref{algo} to $\Omega=[n]^2$, the pair $(\mathcal{S},\mathcal{T})$, the uniformity $q$ and an arbitrary initial fingerprint $(\mathcal{S}_0,\mathcal{T}_0)$. Let $N$ be the number of rounds and $(\mathcal{S}_{i},\mathcal{T}_{i},\mathcal{X}_{i},\mathcal{Y}_i)$ $(i\leq N)$ be the output families.
\end{assumption}
For convenience, we set $\mathcal{X}_i=\mathcal{Y}_i=\emptyset$ for $i>N$, and display the bound given by Theorem \ref{thmcrossspreadapp} (i) as follows.
\begin{equation}\label{equremainder}
\max\{|\mathcal{F}\setminus\mathcal{F}[\mathcal{S}]|,|\mathcal{G}\setminus\mathcal{G}[\mathcal{T}]|\}\leq\binom{n}{t}^{-2}(n-t)!=:R.
\end{equation}
\begin{lemma}\label{lemmaempty}
If $\mathcal{S}=\emptyset$ or $\mathcal{T}=\emptyset$, then $\min\{|\mathcal{F}|,|\mathcal{G}|\}<h(n,t)$ and $|\mathcal{F}||\mathcal{G}|<h(n,t)^2$.
\end{lemma}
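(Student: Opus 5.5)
Without loss of generality let $\mathcal{S}=\emptyset$; the case $\mathcal{T}=\emptyset$ is symmetric. Then $\mathcal{F}[\mathcal{S}]=\emptyset$, so by \eqref{equremainder}
$$|\mathcal{F}|=|\mathcal{F}\setminus\mathcal{F}[\mathcal{S}]|\leq R=\binom{n}{t}^{-2}(n-t)!.$$
Since $n\geq400t$, we have $\binom{n}{t}\geq n\geq 400$. Hence $R\leq (n-t)!/160000$. This is far below $h(n,t)\geq0.6(n-t)!$ from Lemma \ref{lemmadrgmt} (ii), which applies because $n\geq t+20$. So $\min\{|\mathcal{F}|,|\mathcal{G}|\}\leq|\mathcal{F}|<h(n,t)$ immediately.

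For the product, we need an upper bound on $|\mathcal{G}|$. If $\mathcal{F}=\emptyset$, the product is $0$ and we are done. Otherwise, pick any $\sigma\in\mathcal{F}$. Then $\sigma$ is a $t$-cover of $\mathcal{G}$, so $\mathcal{G}$ is contained in the family of permutations agreeing with $\sigma$ on at least $t$ points. A union bound over $t$-subsets of $\sigma$ gives
$$|\mathcal{G}|\leq\binom{n}{t}(n-t)!.$$
Therefore
$$|\mathcal{F}||\mathcal{G}|\leq\binom{n}{t}^{-2}(n-t)!\cdot\binom{n}{t}(n-t)!=\binom{n}{t}^{-1}(n-t)!^2\leq\frac{(n-t)!^2}{400}.$$
This is strictly less than $0.36(n-t)!^2\leq h(n,t)^2$, again by Lemma \ref{lemmadrgmt} (ii).

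The only point needing care is that the crude bound $\binom{n}{t}(n-t)!$ on $|\mathcal{G}|$ is absorbed by one of the two factors $\binom{n}{t}^{-1}$ in $R$; this is exactly why the remainder in Theorem \ref{thmcrossspreadapp} was taken with the square $\binom{n}{t}^{-2}$. There is no real obstacle beyond checking the numerical constants. Non-triviality of the pair is not needed in this argument.
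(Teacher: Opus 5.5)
Your proposal is correct and follows the paper's proof essentially verbatim. It bounds $|\mathcal{F}|$ by $R$ using \eqref{equremainder}, bounds $|\mathcal{G}|$ crudely by $\binom{n}{t}(n-t)!$ via a member of $\mathcal{F}$ as a $t$-cover, and compares $\binom{n}{t}^{-1}(n-t)!^2$ with $h(n,t)^2\geq 0.36(n-t)!^2$. Your separate treatment of $\mathcal{F}=\emptyset$ is a small extra bit of care that the paper leaves implicit.
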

\begin{proof}
Suppose without loss of generality that $\mathcal{S}=\emptyset$. Then by \eqref{equremainder} and Lemma \ref{lemmadecomp} (ii), $$|\mathcal{F}|=|\mathcal{F}\setminus\mathcal{F}[\mathcal{S}]|\leq R=\binom{n}{t}^{-2}(n-t)!<h(n,t).$$
For the product of sizes, we simply bound  $|\mathcal{G}|$ from above by 
$\binom{n}{t}(n-t)!$. Hence
$$|\mathcal{F}||\mathcal{G}|/((n-t)!)^2<\binom{n}{t}^{-1}<0.36,$$
and so $|\mathcal{F}||\mathcal{G}|<h(n,t)^2$.
\end{proof}
By Lemma \ref{lemmafingerprintproperty} (ii), for each $m\leq N$, we have
\begin{align}
\mathcal{F}&=\left(\bigcup_{i=0}^{m}\mathcal{F}[\mathcal{X}_i]\right)\cup\mathcal{F}[\mathcal{S}_m\setminus\mathcal{X}_m]\cup(\mathcal{F}\setminus\mathcal{F}[\mathcal{S}]).\label{equdecompf}\\
\mathcal{G}&=\left(\bigcup_{i=0}^{m}\mathcal{G}[\mathcal{Y}_i]\right)\cup\mathcal{G}[\mathcal{T}_m\setminus\mathcal{Y}_m]\cup(\mathcal{G}\setminus\mathcal{G}[\mathcal{T}]).\label{equdecompg}
\end{align}
\begin{lemma}\label{lemmafin-stru0}
We have  $$\max\left\{\left|\bigcup_{i=0}^{m}\mathcal{\mathcal{F}}[\mathcal{X}_i]\right|, \left|\bigcup_{i=0}^{m}\mathcal{\mathcal{G}}[\mathcal{Y}_i]\right|\right\}\leq U_m\cdot(n-t-1)!\;\;\;\mbox{for}\;\;0\leq m\leq q-t-1,$$
where $U_m:=0.1t$ for $m\leq q-t-2$ and $U_{q-t-1}:=5t$. In particular, if $N<q-t$, then $\min\{|\mathcal{F}|,|\mathcal{G}|\}<5t(n-t-1)!<h(n,t)$.
\end{lemma}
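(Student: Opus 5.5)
The plan is to bound each layer $\mathcal{F}[\mathcal{X}_i]$ via a union bound over its members, controlling the number of members by Lemma \ref{lemmafingerprintproperty}, and then to sum over $i\le m$. By symmetry it suffices to treat $\mathcal{F}$. A member $X\in\mathcal{X}_i$ has size $q-i$ and is a partial permutation, so $|\mathcal{F}[X]|\le (n-q+i)!$. Hence
\[
\Bigl|\bigcup_{i=0}^{m}\mathcal{F}[\mathcal{X}_i]\Bigr|\le\sum_{i=0}^{m}|\mathcal{X}_i|\,(n-q+i)!\le\sum_{i=0}^{m}\binom{q-i}{t}(q-t-i+1)^{q-t-i}(n-q+i)!,
\]
using Lemma \ref{lemmafingerprintproperty} (v). Writing $j=q-t-i\ge 1$ (since $m\le q-t-1$), the $i$-th term becomes $\binom{t+j}{t}(j+1)^{j}(n-t-j)!$.

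Next I divide by $(n-t-1)!$. The term is then at most $\binom{t+j}{t}(j+1)^{j}/\bigl((n-t-1)\cdots(n-t-j+1)\bigr)$. For $j=1$ this equals $2(t+1)\le 4t$, and that is the only contribution to $U_{q-t-1}=5t$ beyond the tail. For $j\ge 2$ I bound $\binom{t+j}{t}\le (e(t+j)/j)^j$ and $n-t-j+1\ge n-q\ge 15n/16$. The ratio is then at most roughly
\[
\frac{n}{t+j}\Bigl(\frac{e(t+j)(j+1)}{j\,(15n/16)}\Bigr)^{j}.
\]
With $n\ge 400t$ and $n\ge16q\ge16(t+j)$, the base is at most about $e\cdot\frac{3}{2}\cdot\frac{16}{15}\cdot\frac{t+j}{n}$, which is small. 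For $j=2$ the ratio is explicitly $\binom{t+2}{2}\cdot 9/(n-t-1)=O(t^2/n)$, and since $n\ge400t$ this is at most $0.05t$. The terms for $j\ge3$ decay geometrically, so their sum is also at most $0.05t$.

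The case analysis is then as follows. For $m\le q-t-2$ every $j$ satisfies $j\ge2$, giving $U_m=0.1t$. For $m=q-t-1$ we add the $j=1$ term $2(t+1)(n-t-1)!$, giving at most $0.1t+4t\le5t$.

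For the ``in particular'' statement, suppose $N<q-t$. By step (a) of Algorithm \ref{algo}, $\mathcal{S}_N=\mathcal{X}_N$ or $\mathcal{T}_N=\mathcal{Y}_N$; say the former. Then in \eqref{equdecompf} with $m=N$ the middle term vanishes, so $|\mathcal{F}|\le U_N(n-t-1)!+R\le 5t(n-t-1)!$ up to the tiny $R$. Because $U_N\le 5t$ has slack (the actual bound is $2(t+1)+0.1t<5t$), we get a strict inequality $|\mathcal{F}|<5t(n-t-1)!$. Finally $5t(n-t-1)!\le \frac{5t}{n-t}(n-t)!<0.02(n-t)!<h(n,t)$ by Lemma \ref{lemmadrgmt} (ii). If $\mathcal{S}$ or $\mathcal{T}$ is empty, Lemma \ref{lemmaempty} covers that case.

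The main obstacle is the $j\ge2$ tail estimate, specifically making the geometric decay uniform for large $j$ near $q-t$, where $\binom{t+j}{t}(j+1)^j$ competes with $(n-t)^{j-1}$. There I rely on $j\le q-t$ and $n\ge 16q$, which give $(j+1)(t+j)e/j\le e(q+1)\cdot\frac{3}{2}\le n/5$, so each factor is at most $1/4$. The constants have to be checked with care but the argument is routine.
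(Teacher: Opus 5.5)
Your argument follows the paper's proof step for step. The ingredients are the same: the union bound $\sum_{i\le m}|\mathcal X_i|(n-q+i)!$ via Lemma~\ref{lemmafingerprintproperty}~(v), the reindexing $j=q-t-i$, the isolated $j=1$ term $2(t+1)(n-t-1)!$, the explicit $j=2$ term $9\binom{t+2}{2}(n-t-1)!/(n-t-1)$, a geometric tail, and the same deduction of the ``in particular'' statement from \eqref{equdecompf} with $\mathcal S_N=\mathcal X_N$. The structure is right, and the final claim is true. However, the tail bookkeeping you wrote down is wrong as stated, and since the constants are the whole content of the lemma, it needs fixing.

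\textbf{The $j=2$ term.} It is not always at most $0.05t$. For $t=1$ and $n=400$ it equals $27/398\approx0.068$, so your split $0.05t+0.05t$ fails, although the total $0.1t$ survives.

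\textbf{The geometric decay.} Your justification mixes two bounds and does not work as written.
\begin{itemize}
\item The prefactor in your crude bound obtained from $\binom{t+j}{t}\le(e(t+j)/j)^j$ should be $\tfrac{15n}{16}$, not $\tfrac{n}{t+j}$.
\item The inequality $\tfrac32e(q+1)\le n/5$ does not follow from $n\ge16q$. It needs roughly $n\ge20.4(q+1)$, whereas for $n=400t$ one only has $n/q\approx18$; already $t=1$, $n=400$, $q=19$ violates it. So your uniform base is only bounded by $e/10\approx0.27$, not by $1/4$.
\item A bound of the form $\tfrac{15n}{16}b^{\,j}$ with a $j$-independent $b<1$ cannot give $O(t)$ for small $j\ge3$. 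For $j=3$ it is about $0.019n\approx7.5t$ when $n=400t$.
\end{itemize}

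\textbf{The repair.} Use the paper's argument. With $a_j=\binom{t+j}{t}(j+1)^j(n-t-j)!$, compute the exact consecutive ratio
\[
\frac{a_{j+1}}{a_j}=\frac{t+j+1}{n-t-j}\Bigl(1+\frac1{j+1}\Bigr)^{j+1}<\frac{eq}{n-q+1}<\frac{e}{15}\qquad(j\le q-t-1).
\]
Then bound the entire sum over $j\ge2$ by $a_2/(1-e/15)$. Since $(t+1)(t+2)\le6t^2$ and $n-t-1\ge398t$, this is at most $\frac{27t}{398(1-e/15)}(n-t-1)!<0.1t\,(n-t-1)!$. With this replacement, plus $a_1=2(t+1)(n-t-1)!$ added for $m=q-t-1$, your proof is complete.
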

\begin{proof}
By Lemma \ref{lemmafingerprintproperty} (i) and (v), we have
\begin{align}
\left|\bigcup_{i=0}^{m}\mathcal{\mathcal{F}}[\mathcal{X}_i]\right|&\leq\sum_{i=0}^{m}\sum_{W\in\mathcal{X}_i}|S_n[W]|\leq\sum_{i=0}^{m}|\mathcal{X}_i|(n-(q-i))!\nonumber\\
&\leq\sum_{j=q-t-m}^{q-t}\binom{j+t}{t}(j+1)^{j}(n-t-j)!.\nonumber
\end{align}
Write $a_j:=\binom{j+t}{t}(j+1)^{j}(n-t-j)!$ for short. Then for $j\leq q-t-1$,
\begin{align*}
\frac{a_{j+1}}{a_{j}}=\frac{j+t+1}{n-t-j}\cdot\left(1+\frac{1}{j+1}\right)^{j+1}<\frac{eq}{n-q+1}=:\varepsilon.
\end{align*}
Then
\begin{equation}\label{equlemmafin-stru01}
\left|\bigcup_{i=0}^{m}\mathcal{\mathcal{F}}[\mathcal{X}_i]\right|\leq\sum_{j=q-t-m}^{q-t}a_j\leq a_{q-t-m}/(1-\varepsilon)\leq a_2/(1-\varepsilon)
\end{equation}
for $m\leq q-t-2$, and $\sum_{j=1}^{q-t}a_j\leq a_2/(1-\varepsilon)+a_1$.
Since $n\ge16q$, we have $
\varepsilon=\frac{eq}{n-q+1}<\frac e{15}<\frac12$. Moreover, $n-t-1\ge398t$ and $(t+1)(t+2)\le6t^2$, so
\[
\frac{a_2}{(1-\varepsilon)(n-t-1)!}
=\frac{9\binom{t+2}{2}}{(n-t-1)(1-\varepsilon)}
\le\frac{27t}{398(1-e/15)}
<0.1t.
\]
Hence $
\frac{a_2}{1-\varepsilon}+a_1
<(2.1t+2)(n-t-1)!$. This together with (\ref{equlemmafin-stru01}) yields  $\left|\bigcup_{i=0}^{m}\mathcal{\mathcal{F}}[\mathcal{X}_i]\right|\leq U_m(n-t-1)!$. By the same argument, the estimate holds also for $\left|\bigcup_{i=0}^{m}\mathcal{\mathcal{G}}[\mathcal{Y}_i]\right|$. 

Suppose $N<q-t$, and assume without loss of generality that $\mathcal{S}_N=\mathcal{X}_N$. Then by combining (\ref{equdecompf}), (\ref{equremainder}) and the bound above, we obtain that
\begin{equation*}
|\mathcal{F}|\leq\left|\bigcup_{i=0}^{N}\mathcal{\mathcal{F}}[\mathcal{X}_i]\right|+|\mathcal{F}\setminus\mathcal{F}[\mathcal{S}]|\leq(2.1t+2)(n-t-1)!+\binom{n}{t}^{-2}(n-t)!<5t(n-t-1)!,
\end{equation*}
as desired.
\end{proof}
To estimate the product, we follow decomposition is crucial.
\begin{lemma}\label{lemmadecomp}
Suppose $\mathcal{S}_N=\mathcal{X}_N$. Then
	$$\mathcal F[\mathcal S]\times\mathcal G[\mathcal T]\subseteq\bigcup_{i=0}^{N}\bigl(\mathcal F[\mathcal X_i]\times\mathcal G[\mathcal T_i]\bigr)\cup\bigcup_{i=0}^{N-1}\bigl(\mathcal F[\mathcal S_i]\times\mathcal G[\mathcal Y_i]\bigr).$$
\end{lemma}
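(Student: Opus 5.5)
We argue by a telescoping induction along the peeling sequence, using only Lemma \ref{lemmafingerprintproperty}. First, $\mathcal{S}_0$ and $\mathcal{T}_0$ form a fingerprint of $(\mathcal{S},\mathcal{T})$. By Lemma \ref{lemmafingerprintdef} (ii), every member of $\mathcal{S}$ contains a member of $\mathcal{S}_0$, so $\mathcal{F}[\mathcal{S}]\subseteq\mathcal{F}[\mathcal{S}_0]$; likewise $\mathcal{G}[\mathcal{T}]\subseteq\mathcal{G}[\mathcal{T}_0]$. The same reasoning gives, for each $i\geq1$, $\mathcal{F}[\mathcal{S}_{i-1}]\subseteq\mathcal{F}[\mathcal{X}_{i-1}]\cup\mathcal{F}[\mathcal{S}_i]$ and $\mathcal{G}[\mathcal{T}_{i-1}]\subseteq\mathcal{G}[\mathcal{Y}_{i-1}]\cup\mathcal{G}[\mathcal{T}_i]$. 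Indeed, a permutation containing a member $A$ of $\mathcal{S}_{i-1}$ either has $A\in\mathcal{X}_{i-1}$, or has $A\in\mathcal{S}_{i-1}\setminus\mathcal{X}_{i-1}$, in which case $A$ contains a member of the fingerprint $\mathcal{S}_i$. This is the permutation-level analogue of Lemma \ref{lemmafingerprintproperty} (ii).

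The key step is the following inclusion, valid for each $0\leq i<N$:
$$\mathcal F[\mathcal S_i]\times\mathcal G[\mathcal T_i]\subseteq\bigl(\mathcal F[\mathcal X_i]\times\mathcal G[\mathcal T_i]\bigr)\cup\bigl(\mathcal F[\mathcal S_i]\times\mathcal G[\mathcal Y_i]\bigr)\cup\bigl(\mathcal F[\mathcal S_{i+1}]\times\mathcal G[\mathcal T_{i+1}]\bigr).$$
To prove it, take $(F,G)$ on the left. If $F\in\mathcal{F}[\mathcal{X}_i]$, the pair lies in the first set. Otherwise $F\in\mathcal{F}[\mathcal{S}_{i+1}]$ by the previous paragraph. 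If in addition $G\in\mathcal{G}[\mathcal{Y}_i]$, the pair lies in the second set, since $F\in\mathcal{F}[\mathcal{S}_i]$ anyway. Otherwise $G\in\mathcal{G}[\mathcal{T}_{i+1}]$, and the pair lies in the third set.

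Starting from $\mathcal F[\mathcal S]\times\mathcal G[\mathcal T]\subseteq\mathcal F[\mathcal S_0]\times\mathcal G[\mathcal T_0]$ and iterating this inclusion for $i=0,\dots,N-1$, we obtain
$$\mathcal F[\mathcal S]\times\mathcal G[\mathcal T]\subseteq\bigcup_{i=0}^{N-1}\bigl(\mathcal F[\mathcal X_i]\times\mathcal G[\mathcal T_i]\bigr)\cup\bigcup_{i=0}^{N-1}\bigl(\mathcal F[\mathcal S_i]\times\mathcal G[\mathcal Y_i]\bigr)\cup\bigl(\mathcal F[\mathcal S_N]\times\mathcal G[\mathcal T_N]\bigr).$$
The hypothesis $\mathcal{S}_N=\mathcal{X}_N$ turns the last term into $\mathcal F[\mathcal X_N]\times\mathcal G[\mathcal T_N]$, which is exactly the $i=N$ term of the first union in the statement.

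The only real care point is making sure the case split in the key inclusion is asymmetric in the right way. We peel $F$ first, which produces the term with $\mathcal{T}_i$ on the $\mathcal{G}$-side, and only then peel $G$, which produces the term with $\mathcal{S}_i$ on the $\mathcal{F}$-side. This ordering is what makes the index ranges come out as $0,\dots,N$ for the first union and $0,\dots,N-1$ for the second. Beyond that, the argument is routine bookkeeping.
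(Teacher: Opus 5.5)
Your proof is correct and is essentially the paper's argument. The paper fixes $(F,G)$ and follows descending chains $S\supseteq S_0\supseteq\cdots\supseteq S_u$ and $T\supseteq T_0\supseteq\cdots\supseteq T_v$ until they first reach $\mathcal{X}_u$ (resp.\ $\mathcal{Y}_v$, or level $N$), then places the pair in $\mathcal{F}[\mathcal{X}_u]\times\mathcal{G}[\mathcal{T}_u]$ if $u\le v$ and in $\mathcal{F}[\mathcal{S}_v]\times\mathcal{G}[\mathcal{Y}_v]$ if $u>v$. Your one-step inclusion iterated from $i=0$ to $N-1$ is exactly this comparison unrolled level by level, with checking $F$ before $G$ playing the role of the tie-break $u\le v$, and it has the minor advantage of not needing to fix particular chain elements.
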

\begin{proof}
If $N=0$, then $\mathcal{S}_0=\mathcal{X}_0$, and then the containment property of a fingerprint (Lemma \ref{lemmafingerprintdef} (ii)) gives 
\begin{equation*}
	\mathcal{F}[\mathcal{S}]\times\mathcal{G}[\mathcal{T}]
	\subseteq
	\mathcal{F}[\mathcal{S}_0]\times\mathcal{G}[\mathcal{T}_0]
	=
	\mathcal{F}[\mathcal{X}_0]\times\mathcal{G}[\mathcal{T}_0].
\end{equation*}
So the lemma holds trivially. Hence, we may assume $N\geq1$.
	
Let $(F,G)\in\mathcal{F}[\mathcal{S}]\times\mathcal{G}[\mathcal{T}]$, and choose 
	$S\in\mathcal{S}$ and $T\in\mathcal{T}$ with $S\subseteq F$ and $T\subseteq G$. By Lemma \ref{lemmafingerprintdef} (ii) again, there are $S_0\in\mathcal{S}_0$ and $T_0\in\mathcal{T}_0$ with $S_0\subseteq S$ and $T_0\subseteq T$. For each $i<N$ and $S_i\in\mathcal S_i\setminus\mathcal X_i$, pick $S_{i+1}\in\mathcal{S}_{i+1}$ with $S_{i+1}\subseteq S_i$. Since $\mathcal{S}_N=\mathcal{X}_N$, the procedure ends with some index $u\leq N$ and a chain of subsets 
	$$S\supseteq S_0\supseteq S_1\supseteq\cdots\supseteq S_{u},$$
	where $S_i\in\mathcal{S}_i\setminus\mathcal{X}_i$ for $i<u$ and $S_{u}\in\mathcal{X}_{u}$. This yields $F\in\mathcal{F}[\mathcal{X}_u]$. Next, we define an index and a chain $T\supseteq T_0\supseteq T_1\supseteq\cdots\supseteq T_{v}$ as follows. If the same procedure as above ends with some $T_{i}\in\mathcal{Y}_{i}$ and $i\leq N-1$, then set $v=i$; if we have got a chain $T\supseteq T_0\supseteq T_1\supseteq\cdots\supseteq T_{N-1}$ with $T_i\in\mathcal{T}_i\setminus\mathcal{Y}_i$ for $i\leq N-1$, then just put $v=N$ and pick a $T_N\in\mathcal{T}_N$ with $T_{N-1}\supseteq T_N$. Now if $u\leq v$, then $(F,G)\in\mathcal{F}[\mathcal{X}_u]\times\mathcal{G}[\mathcal{T}_u]$ as $G\supseteq T_u$. If $u>v$, then $v\leq N-1$ and hence $T_v\in\mathcal{Y}_v$, implying that $(F,G)\in\mathcal{F}[\mathcal{S}_v]\times\mathcal{G}[\mathcal{Y}_v]$.
\end{proof}
Next, we combine this decomposition with Lemma \ref{lemmatcoverkey-perm}.
At each level, the $t$-covering number of $\mathcal{X}_i$ or $\mathcal{Y}_i$ controls both the size of one layer $\mathcal{F}[\mathcal{X}_i]$ or $\mathcal{G}[\mathcal{Y}_i]$ and the $t$-degree of the opposite
family $\mathcal{G}[\mathcal{T}_i]$ or $\mathcal{F}[\mathcal{S}_i]$.
\begin{lemma}\label{lemmagroupproduct}
If $N<q-t$, then $|\mathcal{F}||\mathcal{G}|<h(n,t)^2$.
\end{lemma}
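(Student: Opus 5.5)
Assume $N<q-t$; by Lemma~\ref{lemmaempty} we may take $\mathcal S,\mathcal T\neq\emptyset$, and by symmetry that the procedure stops because $\mathcal S_N=\mathcal X_N$. Write
\[
|\mathcal F||\mathcal G|\le |\mathcal F[\mathcal S]||\mathcal G[\mathcal T]|+R\,|\mathcal F|+R\,|\mathcal G|.
\]
The terms carrying $R$ are negligible: we bound $|\mathcal F|,|\mathcal G|\le\binom{n}{t}(n-t)!$ crudely, or better via (\ref{equdecompf})--(\ref{equdecompg}). The main term is controlled with Lemma~\ref{lemmadecomp}:
\[
|\mathcal F[\mathcal S]||\mathcal G[\mathcal T]|\le\sum_{i=0}^{N}|\mathcal F[\mathcal X_i]|\,|\mathcal G[\mathcal T_i]|+\sum_{i=0}^{N-1}|\mathcal F[\mathcal S_i]|\,|\mathcal G[\mathcal Y_i]|.
\]
We bound each summand separately and aim for a total of at most $c\,((n-t)!)^2$ with $c$ well below $0.36$. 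This suffices, since $h(n,t)^2\ge0.36((n-t)!)^2$ by Lemma~\ref{lemmadrgmt}(ii).

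Consider a summand $|\mathcal F[\mathcal X_i]||\mathcal G[\mathcal T_i]|$ and put $m_i:=\tau_t(\mathcal X_i)$. Members of $\mathcal X_i$ have size $q-i\le q$. Every $X\in\mathcal X_i\subseteq\mathcal S_i$ is a $t$-cover of $\mathcal T_i$, and $\mathcal S$, $\mathcal T$ are cross $t$-intersecting; hence each $G\in\mathcal G[\mathcal T_i]$ contains some $T\in\mathcal T_i$, and $T$ meets every member of $\mathcal X_i$ in at least $t$ elements. So every member of $\mathcal X_i$ is a $t$-cover of $\mathcal D:=\mathcal G[\mathcal T_i]$. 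Lemma~\ref{lemmatcoverkey-perm} with $\ell=q-i$ and $m=m_i$ (admissible when $m_i\le n-q-1$) gives
\[
\Delta_t(\mathcal G[\mathcal T_i])\le (q-t-i+1)^{m_i-t}(n-m_i)!.
\]
Combining with the trivial bound $|\mathcal G[\mathcal T_i]|\le \tau_t(\mathcal G[\mathcal T_i])^{\,t}\,\Delta_t\le\binom{q}{t}\Delta_t$, obtained from a $t$-cover coming from some member of $\mathcal S_i$ of size at most $q$, produces a factor decaying like $(q/n)^{m_i-t}$.

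On the other side, $\mathcal X_i$ itself has $t$-covering number $m_i$, so Lemma~\ref{lemmafingerprintproperty}(iv)--(v) bound $|\mathcal X_i|$. Moreover, since $\mathcal X_i$ is $(q-i)$-uniform, $|\mathcal F[\mathcal X_i]|\le|\mathcal X_i|(n-q+i)!$. Alternatively, count through a minimal $t$-cover of $\mathcal X_i$ of size $m_i$: $|\mathcal F[\mathcal X_i]|\le\binom{m_i}{t}\Delta_t(\mathcal F[\mathcal X_i])$, with $\Delta_t$ bounded as in Lemma~\ref{lemmafin-stru0}. The key point is to keep the dependence on $m_i$ in both factors. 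A large $m_i$ makes the $\mathcal G$-factor tiny, of order $(eq/n)^{m_i-t}(n-t)!$, while the $\mathcal F$-factor grows at most like $\binom{m_i}{t}$ times $(n-t-1)!$-type quantities. Summing over $m_i\ge t$ should then give a geometric series. If $m_i=t$, then $\mathcal X_i$ has a common $t$-set; non-triviality of the pair, together with Lemma~\ref{lemmaind}(i) applied to sets $X$ not containing that core, handles that case, yielding roughly a $(q/n)$-fraction. The terms $|\mathcal F[\mathcal S_i]||\mathcal G[\mathcal Y_i]|$ are treated symmetrically, with roles swapped.

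Since $N<q-t$, all layers satisfy $i\le q-t-1$, so Lemma~\ref{lemmafin-stru0} gives $U_m\le 5t$. The layers $\mathcal X_i$ are therefore small, of total weight $O(t)(n-t-1)!=O(1/400)(n-t)!$, which keeps every product well below $0.36((n-t)!)^2$.

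The main obstacle is the summation over $i$ and $m_i$: the combinatorial factors $\binom{q}{t}$ and $\binom{m_i}{t}$ must be beaten by $(eq/n)^{m_i-t}$ using only $n\ge400t$. When $m_i$ is close to $t$ this is delicate, and I would first try to absorb $\binom{m_i}{t}\le (em_i/t)^{t}$ by splitting into $m_i\le 2t$ and $m_i>2t$, using in the first range the sharper Lemma~\ref{lemmafin-stru0} layer bound instead of $\binom{m_i}{t}$.
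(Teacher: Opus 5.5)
\textbf{Verdict: genuine gap.} Your skeleton matches the paper's: the decomposition of Lemma~\ref{lemmadecomp}, then Lemma~\ref{lemmatcoverkey-perm} with $\mathcal D=\mathcal G[\mathcal T_i]$ and $\mathcal C=\mathcal X_i$, keeping $\tau:=\tau_t(\mathcal X_i)$ in both factors. The gap is in how you pass from $\Delta_t(\mathcal G[\mathcal T_i])$ to $|\mathcal G[\mathcal T_i]|$: the factor $\binom qt$ you introduce cannot be absorbed.

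To see this, take $\tau=t$ and $j:=q-t-i=1$. Your bounds then give at best $|\mathcal F[\mathcal X_i]|\le 2(n-t-1)!$ and $|\mathcal G[\mathcal T_i]|\le\binom qt(n-t)!$. That is a ratio $2\binom qt/(n-t)$ against $((n-t)!)^2$. Since $q\ge t(1+3\ln(n/t))$, we have $\binom qt\ge(q/t)^t>18^t$, while $n$ may be as small as $400t$; for $t=2$, $n=800$ this single summand is already bounded only by about $2((n-t)!)^2$.

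Neither of your proposed fixes repairs this.
\begin{itemize}
\item The decay $(eq/n)^{\tau-t}$ equals $1$ at $\tau=t$, so splitting into $\tau\le 2t$ and $\tau>2t$ cannot help.
\item The layer bound $U_m\le 5t$ of Lemma~\ref{lemmafin-stru0} controls only the $\mathcal F$-side. Multiplying it by an a priori bound on $|\mathcal G[\mathcal T_i]|$ is exactly the ``separate estimates'' approach, which the paper notes only works for $n/\ln n=\Omega(t^2)$.
\end{itemize}
So your closing paragraph does not justify that each product is below $0.36((n-t)!)^2$. Moreover, with up to $2(q-t)$ summands you need decay across the layers, not just a uniformly small per-term bound.

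The missing idea is to tie each combinatorial factor to a matching factorial loss through $j$.
\begin{itemize}
\item Every member of $\mathcal X_i$ is a $t$-cover of $\mathcal G[\mathcal T_i]$ of size exactly $t+j$. Hence $|\mathcal G[\mathcal T_i]|\le\binom{t+j}{t}\Delta_t(\mathcal G[\mathcal T_i])$, not $\binom qt\Delta_t$.
\item The factor $\binom{t+j}{t}\le q^j/j!$ is paid for by $(n-t-j)!/(n-t)!\le(n-q+1)^{-j}$. The family $\mathcal F[\mathcal X_i]$ carries this loss because $\mathcal X_i$ is $(t+j)$-uniform.
\item Symmetrically, counting $\mathcal X_i$ through a minimum $t$-cover and Lemma~\ref{lemmafingerprintproperty}(iv) gives $|\mathcal X_i|\le\binom\tau t(j+1)^j$. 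Here $\binom\tau t\le q^{\tau-t}/(\tau-t)!$ is paid for by the $(n-\tau)!$ coming from Lemma~\ref{lemmatcoverkey-perm}.
\item You also need $\tau\le t+j$, which holds because members of $\mathcal T_i$ are $t$-covers of $\mathcal X_i$ of size at most $t+j$. This is used both for the hypothesis $n\ge\tau+\ell+1$ and in the estimates above.
\end{itemize}
This yields the per-term bound $\frac{1}{j!(\tau-t)!}\bigl(\frac{q(j+1)}{n-q+1}\bigr)^{j+\tau-t}$, with $j\ge1$ since $i\le N\le q-t-1$. Summing over $\tau$ and $j$ (with $j=1$ occurring once and each $j\ge2$ at most twice) gives less than $0.2$. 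The $R$-terms then keep the total below $0.3((n-t)!)^2<h(n,t)^2$.

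In this argument the case $\tau=t$ needs no separate treatment, so neither non-triviality nor Lemma~\ref{lemmaind}(i) is needed.
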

\begin{proof}
Without loss of generality, suppose $\mathcal{S}_N=\mathcal{X}_N$.	We estimate each pair in the right-hand side of the decomposition in Lemma \ref{lemmadecomp}. Fix $i\leq N$, and consider the pair  $(\mathcal{F}[\mathcal{X}_i],\mathcal{G}[\mathcal{T}_i])$ of cross $t$-intersecting families. Write $j=q-t-i$ for short. We may suppose that $\mathcal X_i\neq\emptyset$ and $\mathcal G[\mathcal T_i]\neq\emptyset$. Since every member of $\mathcal{T}_i$ is a $t$-cover of
$\mathcal X_i$ and has size at most $q-i=t+j$, it follows that $\tau_t(\mathcal X_i)\leq t+j$. 
Let $C$ be a $t$-cover of $\mathcal X_i$ of size
$\tau_t(\mathcal X_i)$. Every member of $\mathcal X_i$
contains a $t$-subset of $C$. Hence, by
Lemma \ref{lemmafingerprintproperty} {\rm(iv)},
$$
|\mathcal X_i|
\leq
\sum_{Z\in\binom Ct}|\mathcal X_i[Z]|
\leq
\binom{\tau_t(\mathcal X_i)}t(j+1)^j.
$$
It follows that
\begin{equation}\label{equproductXi}
	|\mathcal F[\mathcal X_i]|
	\leq
	\binom{\tau_t(\mathcal X_i)}t
	(j+1)^j(n-t-j)!.
\end{equation}
On the other hand, $\mathcal{X}_i$ forms a collection of $t$-covers of $\mathcal{G}[\mathcal{T}_i]$. By applying
Lemma \ref{lemmatcoverkey-perm} with $\mathcal{D}=\mathcal{G}[\mathcal{T}_i]$, $\mathcal{C}=\mathcal{X}_i$, $\ell=t+j$ and $m=\tau_t(\mathcal{X}_i)$, we obtain
$$
\Delta_t(\mathcal{G}[\mathcal{T}_i])\leq(j+1)^{\tau_t(\mathcal X_i)-t}
\bigl(n-\tau_t(\mathcal X_i)\bigr)!.$$
Fixing a member of \(\mathcal X_i\) and taking the union over its
\(t\)-subsets gives
\begin{equation}\label{equproductYi}
	|\mathcal G[\mathcal T_i]|
	\leq
	\binom{t+j}{t}
	(j+1)^{\tau_t(\mathcal X_i)-t}
	\bigl(n-\tau_t(\mathcal X_i)\bigr)!.
\end{equation}
Since $t\le\tau_t(\mathcal X_i)\le t+j\le q$, we have
\[
\frac{(n-t-j)!}{(n-t)!}\le(n-q+1)^{-j}\;\;\mbox{and}\;\;
\frac{(n-\tau_t(\mathcal X_i))!}{(n-t)!}
\le(n-q+1)^{-(\tau_t(\mathcal X_i)-t)}.
\]
By combining these with \eqref{equproductXi} and \eqref{equproductYi}, we obtain that
\begin{equation}\label{equproductratio}
\frac{|\mathcal F[\mathcal X_i]|
		|\mathcal G[\mathcal T_i]|}
	{((n-t)!)^2}\leq\frac{1}{j!\,(\tau_t(\mathcal X_i)-t)!}\left(\frac{q(j+1)}{n-q+1}\right)^{j+\tau_t(\mathcal X_i)-t}.
\end{equation}
 Since $n\ge16q$, we have $q/(n-q+1)<1/15$.
Recall that $t\le\tau_t(\mathcal X_i)\le t+j$.
If $j=1$, then $\tau_t(\mathcal X_i)-t\in\{0,1\}$,
so the right-hand side in (\ref{equproductratio}) is at most $2/15$. For $j\geq2$, we simply use
\begin{equation*}
	\frac{1}{(\tau_t(\mathcal{X}_i)-t)!}
	\left(\frac{j+1}{15}\right)^{\tau_t(\mathcal{X}_i)-t}
	\leq
	\sum_{s=0}^{\infty}\frac{1}{s!}
	\left(\frac{j+1}{15}\right)^s
	=
	e^{(j+1)/15}.
\end{equation*}
Combining this with (\ref{equproductratio}) and $q/(n-q+1)<1/15$, we obtain
\begin{equation}\label{equproductratio'}
\frac{|\mathcal{F}[\mathcal{X}_i]||\mathcal{G}[\mathcal{T}_i]|}{((n-t)!)^2}
\leq\frac{1}{j!}\left(\frac{j+1}{15}\right)^je^{(j+1)/15}.
\end{equation}
These bounds decrease geometrically, as the ratio for successive
values of $j$ is
\[
\frac{e^{1/15}}{15}
\left(1+\frac1{j+1}\right)^{j+1}
<\frac{e^{16/15}}{15}<\frac15.
\]
Interchanging the two families gives the same bounds for
$|\mathcal F[\mathcal S_i]|\,|\mathcal G[\mathcal Y_i]|/((n-t)!)^2$.

By Lemma \ref{lemmadecomp}, we have 
$$|\mathcal F[\mathcal S]||\mathcal G[\mathcal T]|\leq\sum_{i=0}^{N}|\mathcal F[\mathcal X_i]||\mathcal G[\mathcal T_i]|+\sum_{i=0}^{N-1}|\mathcal F[\mathcal S_i]||\mathcal G[\mathcal Y_i]|.$$
Note that $N\leq q-t-1$ and $j=q-t-i$. Then the term with $j=1$ occurs
at most once, and each term with $j\ge2$ occurs at most twice.
Since the right-hand side in (\ref{equproductratio'}) at $j=2$ is $e^{1/5}/50$, we obtain
\[
\begin{aligned}
	\frac{|\mathcal F[\mathcal S]|\,
		|\mathcal G[\mathcal T]|}{((n-t)!)^2}
	&\le\frac2{15}
	+2\cdot\frac{e^{1/5}}{50}
	\sum_{j=2}^{\infty}\left(\frac15\right)^{j-2}=\frac2{15}+\frac{e^{1/5}}{20}<0.2.
\end{aligned}
\]

Finally, this together with \eqref{equremainder} yields
\begin{align*}
|\mathcal{F}||\mathcal{G}|&\leq|\mathcal{F}[\mathcal{S}]||\mathcal{G}[\mathcal{T}]|+R(|\mathcal{F}[\mathcal{S}]|+|\mathcal{G}[\mathcal{T}]|)+R^2\\
&<0.2((n-t)!)^2+2R\binom{n}{t}(n-t)!+R^2<0.3((n-t)!)^2,
\end{align*}
where in the last step we used $\frac{2R\binom{n}{t}(n-t)!}{((n-t)!)^2}=2\binom{n}{t}^{-1}<0.05$ and $R^2/((n-t)!)^2=\binom{n}{t}^{-4}<0.05$ for $n\geq400t$. Then $|\mathcal{F}||\mathcal{G}|<h(n,t)^2$ as Lemma \ref{lemmadrgmt} (ii) gives $h(n,t)\geq0.6(n-t)!$.
\end{proof}
\subsection{The terminal case $N=q-t$ and the product bounds}
Lemmas \ref{lemmafin-stru0} and \ref{lemmagroupproduct} give strict bounds for  $N\leq q-t-1$. In the remaining case $N=q-t$, the peeling procedure reaches the $t$-uniform layer. 
\begin{lemma}\label{lemmafin-stru2}
	Suppose $N=q-t$. Then there exist a partial permutation $X$ of size $t$ and an index $j$ with $j=\min\{0\leq i\leq q-t-1:\mathcal{S}_{i}=\mathcal{T}_{i}=\{X\}\}$, and the following hold.
\begin{itemize}
\item[\rm(i)]$\min\{|\mathcal{F}|,|\mathcal{G}|\}\leq h(n,t)+t$, with equality if and only if $(\mathcal F,\mathcal G)$ is isomorphic to a pair given in
Construction \ref{construction}. Further, if both $\mathcal{F}$ and $\mathcal{G}$ are non-trivial, then $|\mathcal{F}||\mathcal{G}|\leq(h(n,t)+t)^2$, with the same extremal configurations.
\item[\rm (ii)] $|\mathcal{F}||\mathcal{G}|\leq h(n,t)((n-t)!+t)$, with equality if and only if $(\mathcal F,\mathcal G)$ or $(\mathcal G,\mathcal F)$ is isomorphic to a pair given in
Construction \ref{construction2}. 
\end{itemize}	
\end{lemma}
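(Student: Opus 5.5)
The plan is to first identify $X$, then decompose $\mathcal F$ and $\mathcal G$ along the $t$-coset $\mathcal C_X$ of permutations containing $X$, and finally run a case analysis using Lemma \ref{lemmaind}.

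\textbf{Identifying $X$.} Since $N=q-t$, the last fingerprint $(\mathcal S_{q-t},\mathcal T_{q-t})$ lies in $\binom{\Omega}{\leq t}$ by Lemma \ref{lemmafingerprintproperty} (i). It is cross $t$-intersecting and non-empty, so every member has size exactly $t$. Two cross $t$-intersecting families of $t$-sets must both equal a single $\{X\}$. Because the procedure did not stop earlier, the fingerprint property (Lemma \ref{lemmafingerprintdef} (ii)) forces every member of $\mathcal S_{q-t-1}\setminus\mathcal X_{q-t-1}$ and of $\mathcal T_{q-t-1}\setminus\mathcal Y_{q-t-1}$ to contain $X$. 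I will then trace back along the chains to see that some $\mathcal S_i=\mathcal T_i=\{X\}$ with $i\le q-t-1$, and take $j$ minimal. $X$ is a partial permutation because $\mathcal S$ consists of partial permutations.

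\textbf{Decomposition along $\mathcal C_X$.} By \eqref{equdecompf} and \eqref{equdecompg} with $m=j$, I obtain
\[
\mathcal F\setminus\mathcal C_X\subseteq\bigcup_{i<j}\mathcal F[\mathcal X_i]\cup(\mathcal F\setminus\mathcal F[\mathcal S]),
\]
and similarly for $\mathcal G$. Lemma \ref{lemmafin-stru0} and \eqref{equremainder} then give $|\mathcal F\setminus\mathcal C_X|,|\mathcal G\setminus\mathcal C_X|\le 5t(n-t-1)!+R$. This is small compared with $(n-t)!$, but not compared with $d_{n-t-1}$. Next I use the key observation. Any $\tau\in\mathcal G\setminus\mathcal C_X$ is a full permutation and a $t$-cover of $\mathcal F[X]$, with $|\tau\cap X|<t$. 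Lemma \ref{lemmaind} (ii) therefore gives $|\mathcal F[X]|\le h(n,t)$, and $|\mathcal F[X]|\le 0.5(n-t)!$ unless $|\tau\cap X|=t-1$. The same holds with the roles of $\mathcal F$ and $\mathcal G$ interchanged.

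\textbf{Case analysis.} If some family, say $\mathcal F[X]$, is at most $0.5(n-t)!$ or $h(n,t)-d_{n-t-1}$, then its total size is at most $h(n,t)-0.3(n-t-1)!$ or so, and the other family is at most $(n-t)!+5t(n-t-1)!+R$. I then need to compare directly with the target bounds, using $h(n,t)\ge 0.6(n-t)!$; I expect this to need some care. Otherwise every outside member $\tau$ of $\mathcal G$ agrees with $X$ on exactly $t-1$ points. By Lemma \ref{lemmaind} (iii), all such $\tau$ must coincide outside $X^{(1)}\cup\{k\}$, so they are determined by which point of $X$ is dropped. Hence there are at most $t$ of them, exactly as in Constructions \ref{construction} and \ref{construction2}.

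\textbf{Finishing (i) and (ii).} For (i), if both families have outside members, then each has size at most $h(n,t)+t$, with equality forcing the equality structure of Lemma \ref{lemmaind} (ii) on both sides. After a double translate this is Construction \ref{construction}. If one family lies in $\mathcal C_X$, a strict bound on the minimum follows as above. For (ii), nontriviality gives an outside member in, say, $\mathcal G$. Then $|\mathcal F|\le h(n,t)+|\mathcal F\setminus\mathcal C_X|$ and $|\mathcal G|\le(n-t)!+|\mathcal G\setminus\mathcal C_X|$. I will then optimise the product, showing that outside members of $\mathcal F$ cost $\mathcal G[X]$ at least $d_{n-t-1}$, which outweighs their gain, so the extremum is Construction \ref{construction2}.

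\textbf{Main obstacle.} The main difficulty is the exchange argument: proving that more than $t$ outside members, or any non-aligned outside members, lose at least $d_{n-t-1}\approx 0.37(n-t-1)!$ in the opposite coset part, when the crude outside bound is only $5t(n-t-1)!$. I would first try a sharper count. Outside members of $\mathcal G$ agreeing with $X$ on $t-1$ points are forced by $\mathcal F[X]$, which is nearly full, to agree with a single permutation on many points. This should bound their number by $t$ plus something much smaller than $d_{n-t-1}$.
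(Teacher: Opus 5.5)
Your identification of $X$ and your treatment of the coset part are fine. The latter matches the paper's Case~2: Lemma~\ref{lemmaind}(ii) from an outside member, and Lemma~\ref{lemmaind}(iii) forcing the outside members to be the at most $t$ ``aligned'' ones. But your control of the parts outside $\mathcal{C}_X$ has a genuine gap.

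Decomposing at level $m=j$ only gives $|\mathcal{F}\setminus\mathcal{C}_X|\le|\bigcup_{i<j}\mathcal{F}[\mathcal{X}_i]|+R$. Even with the sharper constant that Lemma~\ref{lemmafin-stru0} gives here (as $j-1\le q-t-2$), this is of order $0.1t(n-t-1)!$, and your $5t(n-t-1)!$ is weaker still. The only saving Lemma~\ref{lemmaind}(iii) provides is $d_{n-t-1}\approx e^{-1}(n-t-1)!$, so the trade-off breaks down once $t\ge4$:
\begin{itemize}
\item From $|\mathcal{F}[X]|\le h(n,t)-d_{n-t-1}$ you cannot conclude $|\mathcal{F}|\le h(n,t)-0.3(n-t-1)!$.
\item In (ii), suppose $\mathcal{F}\subseteq\mathcal{C}_X$ and $\mathcal{G}$ has a non-aligned outside member, and put $\varepsilon=|\mathcal{G}\setminus\mathcal{C}_X|$. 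You only get $|\mathcal{F}||\mathcal{G}|\le(h(n,t)-d_{n-t-1})((n-t)!+\varepsilon)$. This exceeds $h(n,t)((n-t)!+t)$ as soon as $\varepsilon$ is more than roughly $0.6(n-t-1)!$.
\end{itemize}
The sharper count you sketch does not close this. Lemma~\ref{lemmaind}(iii) restricts the outside members of one family only when the coset part of the \emph{other} family is nearly full. The dangerous configurations are precisely those in which that coset part has already lost $d_{n-t-1}$.

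The missing idea is to split according to $j$ and exploit the fingerprint structure, as the paper does.

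\textbf{Case $j=0$.} Then $\mathcal{S}_0=\mathcal{T}_0=\{X\}$, so $\mathcal{F}[\mathcal{S}]\subseteq\mathcal{F}[X]$ and $\mathcal{G}[\mathcal{T}]\subseteq\mathcal{G}[X]$. Hence both outside parts have size at most $R<(n-t-1)!/n$, which is negligible against $d_{n-t-1}$, and your case analysis goes through. You still have to check one thing for $t\ge2$: cross $t$-intersection among the outside members forces the relevant permutation to fix at least two points of $[t+1,n-1]$. This is what places the extremal pair in Construction~\ref{construction}.

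\textbf{Case $j>0$.} Decompose at level $j-1$ instead of $j$. By minimality of $j$ you may assume $X\notin\mathcal{S}_{j-1}$. Then every member of $\mathcal{S}^{\uparrow}:=\mathcal{S}_{j-1}\setminus\mathcal{X}_{j-1}$ properly contains $X$. The minimal-cover property (Lemma~\ref{lemmafingerprintdef}(iii)) then shows that $X$ is not a $t$-cover of $\mathcal{T}_{j-1}$. This produces $W_0\in\mathcal{T}_{j-1}$ with $|W_0|\le q$ and $|W_0\cap X|<t$, and $W_0$ $t$-covers $\mathcal{F}[\mathcal{S}^{\uparrow}]$. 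Lemma~\ref{lemmaind}(i) now gives $|\mathcal{F}|\le(q+0.2t)(n-t-1)!$, far below $h(n,t)$, so every inequality in (i) and (ii) is strict.

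Without this step your argument has no way to rule out configurations with $\Theta(t(n-t-1)!)$ members outside $\mathcal{C}_X$.
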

\begin{proof}
We will use frequently the following estimate given in Lemma \ref{lemmadrgmt} (ii).
\begin{equation}\label{equfunhbound}
0.6(n-t)!\leq h(n,t)\leq0.66(n-t)!.
\end{equation}
This also yields $(h(n,t)+t)^2<h(n,t)((n-t)!+t)$.

Since $N=q-t$, we have $\mathcal{X}_{q-t-1}\subsetneqq\mathcal{S}_{q-t-1}$ and $\mathcal{Y}_{q-t-1}\subsetneqq\mathcal{T}_{q-t-1}$. Then both $\mathcal{S}_{q-t-1}$ and $\mathcal{T}_{q-t-1}$ contain at least one $t$-subset, and then $\mathcal{S}_{q-t-1}=\mathcal{T}_{q-t-1}=\{X\}$ because these two families are cross $t$-intersecting antichains.  So we may consider the minimal index $j$ with this property.

\noindent{\bf Case 1.\;}$j>0$.

By the minimality of $j$, the families $\mathcal S_{j-1}$ and
 $\mathcal T_{j-1}$ are not both equal to $\{X\}$. Since they are
 cross $t$-intersecting antichains, they cannot both contain $X$. Then we may assume without loss of generality that $X\notin\mathcal{S}_{j-1}$. Write $\mathcal{S}^\uparrow=\mathcal{S}_{j-1}\setminus\mathcal{X}_{j-1}$ and $\mathcal{T}^\uparrow=\mathcal{T}_{j-1}\setminus\mathcal{Y}_{j-1}$ for short. Then $X$ is contained in every member of $\mathcal{S}^\uparrow$, as $(\mathcal{S}_{j},\mathcal{T}_{j})=(\{X\},\{X\})$ is a fingerprint of  $(\mathcal{S}^\uparrow,\mathcal{T}^\uparrow)$. Therefore, from (\ref{equdecompf}), we have
\begin{equation}\label{equlemmafin-stru21}
\mathcal{F}\subseteq\left(\bigcup_{i=0}^{j-1}\mathcal{F}[\mathcal{X}_i]\right)\cup\mathcal{F}[\mathcal{S}^\uparrow]\cup(\mathcal{F}\setminus\mathcal{F}[\mathcal{S}]).
\end{equation}

 Since $\mathcal{S}_{j-1}$ is an antichain and $X$ is contained in every $S\in\mathcal{S}^\uparrow$, it follows that $X$ is a proper subset of each set from $\mathcal{S}^\uparrow$. So by Lemma \ref{lemmafingerprintdef} (iii), $X$ is not a $t$-cover of $\mathcal{T}_{j-1}$. Hence there exists $W_0\in\mathcal{T}_{j-1}$ with $|W_0\cap X|\leq t-1$. Of course $W_0\in\mathcal{Y}_{j-1}$, because every member of $\mathcal{T}^\uparrow$ also contains $X$. On the other hand, each  $F\in\mathcal{F}[\mathcal{S}^\uparrow]$ satisfies $X\subseteq F$ and $|F\cap W_0|\geq t$. By applying Lemma \ref{lemmaind} (i) with $\ell=q-j+1\leq q$, we obtain that $|\mathcal{F}[\mathcal{S}^\uparrow]|\leq q(n-t-1)!$. This together with (\ref{equlemmafin-stru21}), Lemma \ref{lemmafin-stru0} and (\ref{equremainder}) yields
\begin{align*}
|\mathcal{F}|&\leq0.1t(n-t-1)!+q(n-t-1)!+\binom{n}{t}^{-2}(n-t)!<(q+0.2t)(n-t-1)!.
\end{align*}
Hence $|\mathcal{F}|<h(n,t)$ from (\ref{equfunhbound}) and  $n\geq16q$. For $|\mathcal{G}|$, we simply use \eqref{equdecompg}, \eqref{equremainder} and Lemma \ref{lemmafin-stru0} to derive
\begin{align*}
|\mathcal{G}|&\leq\left|\bigcup_{i=0}^{j-1}\mathcal{\mathcal{G}}[\mathcal{Y}_i]\right|+|\mathcal{G}[\mathcal{T}^\uparrow]|+|\mathcal{G}\setminus\mathcal{G}[\mathcal{T}]|\leq0.2t(n-t-1)!+(n-t)!,
\end{align*}
where in the second step we used $\mathcal{G}[\mathcal{T}^\uparrow]\subseteq S_n[X]$. Hence
$$|\mathcal{F}||\mathcal{G}|/((n-t)!)^2<1.2(q+0.2t)/(n-t-1)<0.36,$$
which gives $|\mathcal{F}||\mathcal{G}|<h(n,t)^2$. Therefore, if $j>0$, then the inequalities in (i) and (ii) hold strictly.

\noindent{\bf Case 2.\;}$j=0$.

Now  $\mathcal{S}_0=\mathcal{T}_0=\{X\}$, and hence the decompositions (\ref{equdecompf}) and (\ref{equdecompg})  reduce to $$\mathcal{F}=\mathcal{F}[X]\cup(\mathcal{F}\setminus\mathcal{F}[X])\;\;\;\mbox{and}\;\;\mathcal{G}=\mathcal{G}[X]\cup(\mathcal{G}\setminus\mathcal{G}[X]).$$
Moreover, $
|\mathcal F\setminus\mathcal F[X]|,\,
|\mathcal G\setminus\mathcal G[X]|
\leq R=\binom{n}{t}^{-2}(n-t)!$.

We may assume that every member of $\mathcal{F}\setminus\mathcal{F}[X]$ or $\mathcal{G}\setminus\mathcal{G}[X]$ contains all but exactly one element of $X$. Indeed, if, say, $|L\cap X|\leq t-2$ for some $L\in\mathcal{F}\setminus\mathcal{F}[X]$, then Lemma \ref{lemmaind} (ii) gives $|\mathcal{G}[X]|\leq0.5(n-t)!$, and hence $|\mathcal{G}|\leq0.5(n-t)!+R<h(n,t)$, which proves the first inequality in (i). For the product of sizes, we first have a trivial bound $|\mathcal{F}|\leq(n-t)!+R$. So $|\mathcal{F}||\mathcal{G}|\leq((n-t)!+R)(0.5(n-t)!+R)<h(n,t)((n-t)!+t)$. If further both $\mathcal{F}$ and $\mathcal{G}$ are non-trivial, then we use Lemma \ref{lemmaind} (ii) to bound $|\mathcal{F}[X]|$, and then obtain
$$|\mathcal{F}||\mathcal{G}|\leq(h(n,t)+R)(0.5(n-t)!+R)<(h(n,t)+t)^2.$$
Thus both (i) and (ii) hold strictly in this case. Hence, in what follows, suppose 
$$|H\cap X|\geq t-1\;\;\mbox{for all}\;\;H\in\mathcal{F}\cup\mathcal{G}.$$

\noindent{\bf Case 2.1.\;}At least one of $\mathcal{F}$ and $\mathcal{G}$ is trivial.

Without loss of generality, suppose that there is a partial permutation $X'$ of size $t$ contained in every member of $\mathcal F$. Suppose first that $X'\ne X$. 
If $X\cup X'$ is not a partial permutation, then
$\mathcal F[X]=\emptyset$; otherwise, $X\cup X'$ is a partial
permutation of size at least $t+1$, and hence $
|\mathcal F[X]|\le (n-t-1)!$. Then $|\mathcal F|\le (n-t-1)!+R$ and $|\mathcal{F}||\mathcal{G}|\leq((n-t-1)!+R)((n-t)!+R)<h(n,t)^2$. Thus both required inequalities are strict in this case. So we consider the case that $X'=X$. Then $X\subseteq F$ for all $F\in\mathcal{F}$. Without loss of generality, assume that $X=\{(1,1),\ldots,(t,t)\}$, namely, every member of $\mathcal{F}$ fixes $[t]$. Since $(\mathcal{F},\mathcal{G})$ is non-trivial, we have  $\mathcal{G}\setminus\mathcal{G}[X]\neq\emptyset$. Fix a $T_0\in\mathcal{G}\setminus\mathcal{G}[X]$. By our assumption that $|T_0\cap X|=t-1$, we may set by double translate that $T_0=(1\ n)$. Then by Lemma \ref{lemmaind} (ii), $$|\mathcal{F}|=|\mathcal F[X]|\leq h(n,t)<h(n,t)+t,$$
which verifies the first inequality in (i). We proceed by bounding the product of their sizes. If there exists $T\in\mathcal{G}\setminus\mathcal{G}[X]$ such that $T(j)\neq j$ for some $j\in[t+1,n-1]$, then by applying Lemma \ref{lemmaind} (iii) with $\mathcal D=\mathcal F,L=(1\ n),K=T$ and $k=n$, we get $|\mathcal{F}|=|\mathcal F[X]|\leq h(n,t)-d_{n-t-1}$. Hence $|\mathcal{F}||\mathcal{G}|\leq(h(n,t)-d_{n-t-1})((n-t)!+R)$. It follows that
\begin{align*}
|\mathcal{F}||\mathcal{G}|-h(n,t)((n-t)!+t)&\leq h(n,t)(R-t)-d_{n-t-1}((n-t)!+R)\\
&<h(n,t)R-d_{n-t-1}(n-t)!<0.
\end{align*}
To prove the inequality above, we used Lemma \ref{lemmadrgmt} (ii), which gives  $d_{n-t-1}>0.3(n-t-1)!>R$.

It remains to suppose that $T(j)=j$ for all $j\in[t+1,n-1]$ and all $T\in\mathcal{G}\setminus\mathcal{G}[X]$. Recall again that we assumed $|T\cap X|=t-1$ for all such $T$. It follows that 
$$\mathcal{G}\setminus\mathcal{G}[X]\subseteq\{(i\ n):i\in[t]\}.$$
On the other hand, every member of $\mathcal{F}$ fixes $[t]$ and $t$-intersects with $T_0=(1\ n)$. Hence
\begin{align*}
	\mathcal{F}&\subseteq\{\sigma\in S_n:\sigma(i)=i\;\mbox{for all}\;i\in[t],\;\sigma(j)=j\;\mbox{for some}\;j\in[t+1,n-1]\},\;\;\mbox{and}\\
	\mathcal{G}&\subseteq\{\sigma\in S_n:\sigma(i)=i\;\mbox{for all}\;i\in[t]\}\cup\{(i\ n):i\in[t]\}.
\end{align*}
Thus $|\mathcal{F}||\mathcal{G}|\leq h(n,t)((n-t)!+t)$, and equality holds precisely if $\mathcal{F}$ and $\mathcal{G}$ equal the families above, respectively.

\noindent{\bf Case 2.2.\;}Both  $\mathcal{F}$ and $\mathcal{G}$ are non-trivial. 

Now  $\mathcal{F}\setminus\mathcal{F}[X]$ and $\mathcal{G}\setminus\mathcal{G}[X]\neq\emptyset$. Fix an $L\in\mathcal{F}\setminus\mathcal{F}[X]$. By double translate, we may suppose $X=\{(1,1),\ldots,(t,t)\}$ and  $L=(1\ n)$. For simplicity, denote by
$$\mathcal{C}:=S_n[X]=\{\sigma\in S_n:\sigma(i)=i\;\mbox{for all}\;i\in[t]\}$$
the $t$-coset centered at $X$. Every member of $\mathcal G[X]$ already agrees with $L=(1\ n)$
at the points $2,\ldots,t$. It cannot agree with $L$ at $1$ or
$n$, and hence must fix some point of $[t+1,n-1]$. Consequently,
\begin{equation}\label{equlemmafin-stru22}
\mathcal{G}[X]\subseteq\{\sigma\in\mathcal{C}:
\sigma(i)=i\text{ for some }i\in[t+1,n-1]\}=:\mathcal{H}.
\end{equation}
Note that the same counting argument as in the proof of Lemma \ref{lemmaind} (ii) gives $$|\mathcal{H}|=(n-t)!-d_{n-t}-d_{n-t-1}=h(n,t).$$

Next, choose
$M\in\mathcal G\setminus\mathcal G[X]$. Let $i_0$ be the unique
point of $[t]$ not fixed by $M$, and suppose $M(k_0)=i_0$. Then 
\begin{equation}\label{equlemmafin-stru23}
	\mathcal{F}[X]\subseteq\{\sigma\in\mathcal{C}:
	\sigma(i)=M(i)\text{ for some }i\in[t+1,n]\setminus\{k_0\}\}.
\end{equation}
By setting $\tau:=(1\ i_0)M(1\ i_0)$, the family on the right-hand side can be written as 
$$\mathcal{H}_{\tau}:=\{\sigma\in\mathcal{C}:
\sigma(i)=\tau(i)\text{ for some }i\in[t+1,n]\}.$$
This is because $\tau(k_0)=1\neq\sigma(k_0)$ for all $\sigma\in\mathcal{C}$. In addition, note that $\tau(1)\ne1$ and $\tau$ fixes $2,\ldots,t$. So $|\tau\cap X|=t-1$, and we get again that $|\mathcal{H}_{\tau}|=h(n,t)$.

We proceed by proving a claim.
\begin{claim}\label{claimlemmafin-stru21}
	The following hold.
\begin{itemize}
\item[\rm(i)]Either $|\mathcal G[X]|\leq h(n,t)-d_{n-t-1}$ or 
$\mathcal F\setminus\mathcal F[X]
\subseteq\{(i\ n):i\in[t]\}$.
\item[\rm(ii)]Either $|\mathcal F[X]|\leq h(n,t)-d_{n-t-1}$ or $\mathcal G\setminus\mathcal G[X]
\subseteq\{(1\ i)\tau(1\ i):i\in[t]\}$.
\end{itemize}
\end{claim}
\begin{proof}
(i)\;Suppose that
$F_0\in\mathcal F\setminus\mathcal F[X]$ is not one of the
transpositions $(i\ n)$, $i\in[t]$. Note that $F_0$ fixes all but one
point of $[t]$ and $t$-intersects $L$. Then there must be some $j\in[t+1,n-1]$ with $F_0(j)\neq j$. By  applying Lemma \ref{lemmaind}{\rm(iii)} with $\mathcal D=\mathcal G,L=(1\ n),K=F_0$ and $k=n$, we obtain $|\mathcal G[X]|\leq|\mathcal{H}|-d_{n-t-1}=h(n,t)-d_{n-t-1}$. Hence (i) holds.

(ii)\;The proof is analogous, with $k_0=\tau^{-1}(1)$ playing the role of $n$ in (i). We observe that $(1\ i)\tau(1\ i), i=1,\ldots,t$ are exactly those $\sigma\in S_n$  which fix
exactly $t-1$ points of $[t]$ and satisfy $
\sigma(j)=\tau(j)$ for each $j\in[t+1,n]\setminus\{k_0\}$. Indeed, each $(1\ i)\tau(1\ i)$ certainly has these
properties. Conversely, assume $\sigma$ satisfies them, and
let $i$ be the unique point of $[t]$ not fixed by $\sigma$. Then we get $\sigma(i)=\tau(1)$ and $\sigma(k_0)=i$, and thus $\sigma=(1\ i)\tau(1\ i)$. Suppose that
$G_0\in\mathcal G\setminus\mathcal G[X]$ is not one of these
permutations. Then $G_0(j)\neq\tau(j)=M(j)$ for some $j\in[t+1,n]\setminus\{k_0\}$. By applying Lemma \ref{lemmaind}{\rm(iii)} with $\mathcal D=\mathcal F,L=M,K=G_0$ and $k=k_0$, we get $|\mathcal F[X]|\leq h(n,t)-d_{n-t-1}$.
\end{proof}
Suppose first that at least one of $\mathcal F[X]$ or $\mathcal G[X]$ has size at most $h(n,t)-d_{n-t-1}$. Then $$\min\{|\mathcal{F}|,|\mathcal{G}|\}\leq h(n,t)-d_{n-t-1}+R<h(n,t)-R,$$
 because Lemma \ref{lemmadrgmt} (ii) gives $d_{n-t-1}>0.3(n-t-1)!>2R$. Next, we bound the size of the other one from above by $h(n,t)+R$, and then derive 
 \begin{align*}
 |\mathcal{F}||\mathcal{G}|<(h(n,t)+R)(h(n,t)-R)<h(n,t)^2.
 \end{align*}

It remains to assume, from Claim \ref{claimlemmafin-stru21}, that 
$$\mathcal F\setminus\mathcal F[X]
\subseteq\{(i\ n):i\in[t]\}\;\;\mbox{and}\;\;\mathcal G\setminus\mathcal G[X]
\subseteq\{(1\ i)\tau(1\ i):i\in[t]\}.$$
It follows that $\max\{|\mathcal{F}|,|\mathcal{G}|\}\leq h(n,t)+t$. Then certainly $|\mathcal{F}||\mathcal{G}|\leq(h(n,t)+t)^2$. Finally, let us characterize equality. Suppose equality holds. Then
\begin{align*}
\mathcal F=&\{\sigma\in\mathcal{C}:\sigma(j)=\tau(j)\text{ for some }j\in[t+1,n]\}\cup\{(i\ n):i\in[t]\},\;\mbox{and}\\
\mathcal G=&\{\sigma\in\mathcal{C}:\sigma(j)=j\text{ for some }j\in[t+1,n-1]\}\cup\{(1\ i)\tau(1\ i):i\in[t]\}.
\end{align*}
It remains only to characterize $\tau$ via the property of cross
$t$-intersection. If $t\geq2$, compare $(i\ n)$ with
$(1\ j)\tau(1\ j)$ for distinct $i,j\in[t]$. They agree
at exactly $t-2$ points of $[t]$, while their common points  
outside $[t]$ are precisely the fixed points of $\tau$ in
$[t+1,n-1]$. Hence $\tau$ must fix at least two points
in this set. If $t=1$, the condition is simply
that $\tau$ and $(1\ n)$ intersect. So by the notation given in Construction \ref{construction}, we get 
$$(\mathcal{F},\mathcal{G})=(\mathcal{H}((1\ n),\tau),\mathcal{H}(\tau,(1\ n))).$$
This finishes the proof.
\end{proof}

\noindent{\bf Proof of Theorem \ref{thmpurelycombin}}.\;We adapt the notation in Assumption \ref{assumption}. Recall that the function $h(n,t)=(n-t)!-d_{n-t}-d_{n-t-1}$ is defined in (\ref{equsizeh}), and with this notation, the upper bounds in (i) and (ii) are $h(n,t)((n-t)!+t)$ and $(h(n,t)+t)^2$, respectively.

By Lemma \ref{lemmaempty}, we may suppose that both $\mathcal{S}$ and $\mathcal{T}$ are non-empty. Then (i) follows directly from Lemmas \ref{lemmagroupproduct} and \ref{lemmafin-stru2} (ii), and (ii) follows from Lemmas \ref{lemmagroupproduct} and \ref{lemmafin-stru2} (i). {\hfill$\square$}\vspace{1em}

\subsection{Bounds for $t$-diversities}
For a partial permutation $Z$ of size $t+2$, write
\[
\mathcal F_1(Z)
:=\{\sigma\in S_n:|\sigma\cap Z|\geq t+1\}.
\]

\noindent{\bf Proof of Theorem \ref{thmcrossdiv}}.\;If $\mathcal{F}$ or $\mathcal{G}$ is trivial, then we are done as $\min\{\gamma_t(\mathcal{F}),\gamma_t(\mathcal{G})\}=0$. Thus we may suppose that
both families are non-trivial and use the notation in Assumption
\ref{assumption}. Recall from \eqref{equremainder} that
\begin{equation}\label{equthmcrossdiv1}
	\max\{|\mathcal{F}\setminus\mathcal{F}[\mathcal{S}]|,|\mathcal{G}\setminus\mathcal{G}[\mathcal{T}]|\}\leq R=\binom{n}{t}^{-2}(n-t)!<n^{-1}(n-t-1)!.
\end{equation}
Since $n\geq400t$, the bound above gives
\begin{equation}\label{equthmcrossdiv2}
0.77t(n-t-1)!+R<t\bigl((n-t-1)!-(n-t-2)!\bigr)=\gamma_t(\mathcal F_1(n,t)).
\end{equation}

If $\mathcal S=\emptyset$ or $\mathcal T=\emptyset$, then the two inequalities displayed above immediately give the required bound. 
Hence suppose that both are non-empty and perform Algorithm
\ref{algo}.

If $N\leq q-t-2$, assume without loss of generality that
$\mathcal S_N=\mathcal X_N$. By \eqref{equdecompf},
Lemma \ref{lemmafin-stru0} and \eqref{equthmcrossdiv1}, $\gamma_t(\mathcal F)\leq|\mathcal F|\leq0.1t(n-t-1)!+R<\gamma_t(\mathcal F_1(n,t))$.

Suppose that $N=q-t$. Then Lemma \ref{lemmafin-stru2} yields $
\mathcal S_{q-t-1}=\mathcal T_{q-t-1}=\{X\}$ for some partial permutation $X$ of size $t$. Hence,
$$
\gamma_t(\mathcal F)
\leq|\mathcal F\setminus\mathcal F[X]|
\leq
\left|\bigcup_{i=0}^{q-t-2}
\mathcal F[\mathcal X_i]\right|+R
\leq0.1t(n-t-1)!+R<\gamma_t(\mathcal F_1(n,t)).
$$

It remains to suppose $N=q-t-1$. Assume without loss of generality that
$\mathcal S_N=\mathcal X_N$. 
%If $\mathcal S_N$ or $\mathcal T_N$ is empty, the preceding argument applied to the corresponding original family gives a strict inequality.

If $\mathcal T_N\neq\mathcal Y_N$, choose
$X\in\mathcal T_N\setminus\mathcal Y_N$. Since
$\mathcal S_N$ and $\mathcal T_N$ are cross $t$-intersecting, we have 
$|X|=t$ and $X\subseteq S$ for every $S\in\mathcal S_N$. Then we arrive again at 
$$
\gamma_t(\mathcal F)
\leq|\mathcal F\setminus\mathcal F[X]|
\leq
\left|\bigcup_{i=0}^{N-1}
\mathcal F[\mathcal X_i]\right|+R
\leq0.1t(n-t-1)!+R<\gamma_t(\mathcal F_1(n,t)).
$$
We may consequently suppose that
$\mathcal T_N=\mathcal Y_N$. Thus both $\mathcal S_N$ and
$\mathcal T_N$ are $(t+1)$-uniform.

Suppose first that there is a set $Z$ of size $t+2$ such that
$$
\mathcal S_N,\mathcal T_N\subseteq\binom Z{t+1}.
$$
If $Z$ is not a partial permutation, at most two of its
$(t+1)$-subsets are partial permutations. Hence all members of
one of $\mathcal S_N$ and $\mathcal T_N$ contain a common
$t$-partial permutation, and the argument above applies.
We may therefore assume that $Z$ is a partial permutation.

If $\mathcal F,\mathcal G\subseteq\mathcal F_1(Z)$, the required
inequality follows from the monotonicity of $t$-diversity, that is, adding members to a family does not
decrease its $t$-diversity.
Otherwise, suppose by symmetry that
$L\in\mathcal F\setminus\mathcal F_1(Z)$, and choose
$X\in\binom Zt$ such that $L\cap Z\subseteq X$. If every member of $\mathcal{T}_N$ contains $X$, then
$\mathcal{G}[\mathcal{T}_N]\subseteq\mathcal{G}[X]$. Hence,
by \eqref{equdecompg}, Lemma \ref{lemmafin-stru0} and
\eqref{equremainder},
$$
\gamma_t(\mathcal{G})\leq
\left|\bigcup_{i=0}^{N-1}\mathcal{G}[\mathcal{Y}_i]\right|+R<\gamma_t(\mathcal{F}_1(n,t)).
$$
We may therefore choose
$T\in\mathcal{T}_N$ with $X\nsubseteq T$. Then
$|L\cap T|<t$. After fixing $T$, let $P$ consist of those assignments of $L\setminus T$ whose rows and columns are not used by $T$. Then $|P|\leq n-t-1$, and every member of
$\mathcal G[T]$ must intersect $P$. It follows from Lemma \ref{lemmadecomp} that
\[
|\mathcal G[T]|
\leq(n-t-1)!-d(n-t-1,|P|)\leq
(n-t-1)!-d_{n-t-1}\leq0.67(n-t-1)!.
\]
There are at most $t$ members of $\binom Z{t+1}$ which do not
contain $X$. Hence, by \eqref{equdecompg},
Lemma \ref{lemmafin-stru0} and \eqref{equthmcrossdiv1}, we obtain that
\begin{align*}
\gamma_t(\mathcal G)&\leq|\mathcal G\setminus\mathcal G[X]|\leq\left|\bigcup_{i=0}^{N-1}\mathcal G[\mathcal Y_i]\right|
	+|\mathcal G[\mathcal T_N]\setminus\mathcal G[X]|+R\\
	&\leq0.77t(n-t-1)!+R<\gamma_t(\mathcal F_1(n,t)).
\end{align*}

It remains to suppose that no such $Z$ exists. The proof reduces to characterizing the structure of $(t+1)$-uniform fingerprints.
We use the following classification from \cite{Wen-Lv-2026+}.
\begin{lemma}[\cite{Wen-Lv-2026+}]\label{lemmastrue-alike}
	Let $t\geq2$, $n\geq t+3$ and  $\mathcal{S},\mathcal{T}\subseteq\binom{[n]}{t+1}$. Suppose that $(\mathcal{S},\mathcal{T})$ is a fingerprint of a pair of cross $t$-intersecting families, and there does not exist  $Z\in\binom{[n]}{t+2}$ such that $\mathcal{S},\mathcal{T}\subseteq\binom{Z}{t+1}$. Then $|\mathcal{S}|,|\mathcal{T}|\geq2$, and there exists $I\in\binom{[n]}{t-1}$ and two families $\mathcal{P},\mathcal{Q}\subseteq\binom{[n]\setminus I}{2}$ such that 
	$$\mathcal{S}\subseteq\{I\cup P:P\in\mathcal{P}\}\;\;\mbox{and}\;\;\mathcal{T}\subseteq\{I\cup Q:Q\in\mathcal{Q}\},$$
	where $(\mathcal{P},\mathcal{Q})$ is isomorphic to one of the following pairs:	
	\begin{itemize} 
		\item[\rm(i)]$(\{\{1,2\},\{3,4\},\{1,4\}\},\;\{\{1,3\},\{2,4\},\{1,4\}\})$,
		\item[\rm(ii)]$(\{\{1,2\},\{3,4\},\{1,4\},\{2,3\}\},\;\{\{1,3\},\{2,4\}\})$,
		\item[\rm(iii)]$(\{\{1,3\},\{2,4\}\},\;\{\{1,2\},\{3,4\},\{1,4\},\{2,3\}\})$.
	\end{itemize}
\end{lemma}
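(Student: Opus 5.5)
We prove Lemma \ref{lemmastrue-alike} by working directly with the fingerprint properties in Lemma \ref{lemmafingerprintdef}. By (iii), every member of $\mathcal S$ is a minimal $t$-cover of $\mathcal T$ and vice versa. Since all sets are $(t+1)$-uniform, two $(t+1)$-sets meet in at least $t$ elements exactly when their symmetric difference has size $2$ or $0$.

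\textbf{Step 1: $|\mathcal S|,|\mathcal T|\ge 2$.} Suppose $\mathcal S=\{S\}$. Then every $T\in\mathcal T$ satisfies $|S\cap T|\ge t$, so $\mathcal T\subseteq\{T:|T\cap S|\geq t\}$. Minimality of $S$ as a $t$-cover of $\mathcal T$ forces, for each $x\in S$, some $T\in\mathcal T$ with $|T\cap (S\setminus\{x\})|<t$, i.e.\ $T\ni x$ and $|T\cap S|=t$. We also need each $T$ to be a minimal $t$-cover of $\mathcal S=\{S\}$. But a $(t+1)$-set $T$ with $|T\cap S|=t$ contains the $t$-set $T\cap S$, which is already a $t$-cover of $\{S\}$. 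So $T$ is not minimal, unless $T=S$, in which case $T\setminus\{y\}$ is also a cover. This is a contradiction. Hence $|\mathcal S|\ge2$, and symmetrically $|\mathcal T|\ge2$.

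\textbf{Step 2: extracting the common core $I$.} Minimality of $T\in\mathcal T$ means that for every $y\in T$ there is some $S\in\mathcal S$ with $|S\cap T|=t$ and $y\in S$. Equivalently, the sets in $\mathcal S$ realize $T\setminus S=\{z\}$ for every $z\in T$ other than $y$. In particular, each $T$ has at least two members of $\mathcal S$ at ``distance one'' from it, missing different points of $T$. The plan is to consider the intersection graph structure: each pair $S\in\mathcal S$, $T\in\mathcal T$ satisfies $|S\triangle T|\le2$. Fix $S_1\ne S_2\in\mathcal S$. Every $T$ lies within distance one of both, so $|S_1\triangle S_2|\in\{2,4\}$.

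If all pairs in $\mathcal S\cup\mathcal T$ have symmetric difference at most $2$, a standard argument shows the union is contained either in $\binom{Z}{t+1}$ for some $(t+2)$-set $Z$ (the ``star of a $(t+2)$-set'' case) or all sets share a common $t$-set (the ``sunflower'' case). The sunflower case contradicts minimality, since the common $t$-set would cover. The $(t+2)$-set case is excluded by hypothesis. Hence some pair, say $S_1,S_2$, has $|S_1\triangle S_2|=4$, and every $T$ is then of the form $(S_1\cap S_2)\cup\{a,b\}$ with $a\in S_1\setminus S_2$ and $b\in S_2\setminus S_1$. Setting $I=S_1\cap S_2$ gives $|I|=t-1$.

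We then show that all of $\mathcal S$ also contains $I$. Take any $S$; it is within distance one of at least two distinct $T$'s of the above shape. A short case check shows that $S\supseteq I$ unless $S$ and these $T$'s lie in a common $(t+2)$-set. The latter configuration is ruled out by combining it with $S_1,S_2$, which lie at distance $4$.

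\textbf{Step 3: the link classification.} After removing $I$, $\mathcal P$ and $\mathcal Q$ are families of $2$-sets (edges) that pairwise intersect across the two families, i.e.\ they are cross-intersecting graphs. Each edge of $\mathcal P$ must be a minimal $1$-cover of $\mathcal Q$ (once $I$ is accounted for), and likewise for $\mathcal Q$. Moreover, neither family may consist of edges through a single vertex, because then $I$ plus that vertex would be a smaller cover. Cross-intersecting graph pairs in which every edge of each family meets every edge of the other, with neither being a star and each having at least two edges, live on $4$ vertices: the two disjoint edges of one family force the other family's edges to be transversals of a $4$-cycle. Enumerating the possibilities on $\{1,2,3,4\}$ and discarding those contained in a triangle (which correspond to the excluded $(t+2)$-set) gives exactly the configurations (i)–(iii) up to isomorphism.

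I expect Step 2 to be the main obstacle, specifically the case analysis showing that every member contains $I$ without falling into the $(t+2)$-set configuration. Step 3 is a finite check.
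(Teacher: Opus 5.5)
The paper does not prove this lemma. It is imported from \cite{Wen-Lv-2026+}, so there is no in-paper argument to compare yours with. Judged on its own, your outline is correct, apart from one step whose stated reason does not work. Step 1 is fine. The dichotomy in Step 2 is the right classical fact: if pairwise symmetric differences are at most $2$, then either there is a common $t$-set, which contradicts minimality, or everything lies in some $\binom{Z}{t+1}$, which is excluded. Once you have $S_1,S_2\in\mathcal S$ with $|S_1\triangle S_2|=4$, your count correctly gives $\mathcal T\subseteq\{I\cup\{a_i,b_j\}\}$, where $I=S_1\cap S_2$, $S_1\setminus S_2=\{a_1,a_2\}$ and $S_2\setminus S_1=\{b_1,b_2\}$.

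The step that fails as written is the claim that every $S\in\mathcal S$ contains $I$, which you justify by the distance $4$ between $S_1$ and $S_2$. Distances cannot exclude the bad configuration. Take $\mathcal T=\{I\cup\{a_1,b_1\},\,I\cup\{a_1,b_2\}\}$ and $S=(I\setminus\{z\})\cup\{a_1,b_1,b_2\}$ with $z\in I$: every cross $t$-intersection condition holds, including those involving $S_1$ and $S_2$.

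What excludes this configuration is the minimality of $S_1$ and $S_2$ as $t$-covers of $\mathcal T$. Every element of $S_1$ lies in some member of $\mathcal T$, so $a_1$ and $a_2$ do. By minimality of $S_2$, so do $b_1$ and $b_2$. Hence the bipartite graph $\{\{a_i,b_j\}: I\cup\{a_i,b_j\}\in\mathcal T\}$ covers all four vertices, and therefore contains a perfect matching. That is, there are $T,T'\in\mathcal T$ with $T\cap T'=I$. Rerun your $(S_1,S_2)$ count with $(T,T')$ in their place. It shows every $S\in\mathcal S$ equals $I\cup\{x,y\}$ with $x\in T\setminus T'$ and $y\in T'\setminus T$, so $S\supseteq I$ with no case check.

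This fix also makes Step 3 immediate. $\mathcal P$ contains the disjoint edges $\{a_1,a_2\}$ and $\{b_1,b_2\}$, and $\mathcal Q$ contains a perfect matching between them. So all edges live on four vertices, and the star and triangle cases never arise. Your general justification for excluding stars is in any case imprecise: it only applies to members of $\mathcal S$ that contain the centre.

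Finally, the enumeration produces sub-pairs of (i)--(iii) rather than exactly those pairs. For example, $(\{\{1,2\},\{3,4\}\},\{\{1,3\},\{2,4\}\})$ is a valid fingerprint contained in (ii). This is consistent with the lemma, which only asserts containment.
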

By Lemma \ref{lemmastrue-alike}, there is a partial permutation
$I$ of size $t-1$ and four elements $x,y,z,w$ outside $I$ such that the terminal pair $(\mathcal{X}_N,\mathcal{Y}_N)$ is contained in one of 
\begin{align*}
	&(\{Ixy,Izw,Ixw\},\{Ixz,Iyw,Ixw\}), (\{Ixy,Izw,Ixw,Iyz\},\{Ixz,Iyw\}),
\end{align*}
or the reverse of the second pair, where
$Iab:=I\cup\{a,b\}$.

In each case, after interchanging the two sides if necessary,
there is a partial permutation $X$ of size $t$ contained in all but at most
one member of $\mathcal S_N$. It follows that
\begin{align*}
\gamma_t(\mathcal F)&\leq|\mathcal F\setminus\mathcal F[X]|\leq\left|\bigcup_{i=0}^{N-1}\mathcal F[\mathcal X_i]\right|+(n-t-1)!+R\\
&\leq(1+0.1t)(n-t-1)!+R<\gamma_t(\mathcal F_1(n,t)).
\end{align*}
This finishes the proof. {\hfill $\square$}\vspace{1em}

\noindent{\bf Proof of Theorem \ref{thmdiv}.}\;For $t\geq2$, the theorem follows immediately by applying Theorem \ref{thmcrossdiv} with $\mathcal G=\mathcal F$. So we suppose $t=1$. We may assume that $\mathcal F$ is
	non-trivial and adopt the notation in the proof of Theorem \ref{thmcrossdiv}. 
	
	Set $\mathcal G=\mathcal F$, apply Theorem
	\ref{thmspreadapp} only once, and put $\mathcal T=\mathcal S$. 
	Moreover, Algorithm \ref{algo} may be performed diagonally, so that $\mathcal S_i=\mathcal T_i$ and $\mathcal X_i=\mathcal Y_i$ for every $i$. All estimates in the preceding proof remain valid for $t=1$. The
	only case requiring attention is $N=q-2$. In this case $\mathcal C:=\mathcal S_N=\mathcal X_N$ is an
	intersecting family of $2$-sets, each of which is a
	minimal $1$-cover of $\mathcal C$. Then its members have no common element, and so
	$\mathcal C=\binom Z2$ for some $3$-set $Z$.
	Since all three pairs in $Z$ are partial permutations,
	so is $Z$.
	
	If $\mathcal F\subseteq\mathcal F_1(Z)$, then the monotonicity of $\gamma_1$ gives
	$\gamma_1(\mathcal F)\leq\gamma_1(\mathcal F_1(Z))
	=(n-2)!-(n-3)!$.
	Otherwise, choose
	$L\in\mathcal F\setminus\mathcal F_1(Z)$ and $z\in Z$
	such that $L\cap Z\subseteq\{z\}$.
	Every member of $\mathcal F[Z\setminus\{z\}]$ must meet
	$L$ outside $Z\setminus\{z\}$.
	The same counting argument as in the proof gives
	$|\mathcal F[Z\setminus\{z\}]|
	\leq(n-2)!-d_{n-2}\leq0.67(n-2)!$. Then $\mathcal F[\mathcal C]\setminus\mathcal F[\{z\}]
	\subseteq\mathcal F[Z\setminus\{z\}]$,
	Lemma \ref{lemmafin-stru0} and (\ref{equremainder}) yield
	\[
	\gamma_1(\mathcal F)
	\leq|\mathcal F\setminus\mathcal F[\{z\}]|
	\leq0.77(n-2)!+R
	<(n-2)!-(n-3)!,
	\]
	where the last inequality follows from 
	$R=n^{-2}(n-1)!$ and $n\geq400$. This also proves the assertion about optimal families. {\hfill$\square$}
\section{The alternating group}\label{secalternating}
The proof of Theorem \ref{thmcrossAn} follows the same lines as that of Theorem \ref{thmpurelycombin} (ii). We therefore give an abbreviated proof, indicating the necessary modifications and omitting repeated details.

Let $e_s$ and $o_s$ denote the numbers of even and
odd derangements of $[s]$, respectively. Put
\begin{equation}\label{equhAn}
	h_A(n,t):=
	\frac{(n-t)!}{2}-o_{n-t}-o_{n-t-1}.
\end{equation}

%We first describe the extremal families. 
To prove Theorems \ref{thmcrossAn} and \ref{thmAn}, we again identify $A_n$ with a subfamily of $\binom{[n]^2}{n}$. Let us record several facts in the following lemma. For the sake of simplicity, we shall apply Theorem \ref{thmcrossspreadapp}
directly to families in $A_n$, that is, regard them as subfamilies of $S_n$. Nevertheless, we note that $A_n$ itself is also highly spread (see Lemma \ref{lemmacountingalt} (i)). So the method may also be applied directly to $A_n$, with the remainder improved by a factor of two. 
\begin{lemma}\label{lemmacountingalt}
The following hold.
\begin{itemize}
\item[\rm(i)]$\Delta_{n}(A_n)=\Delta_{n-1}(A_n)=\Delta_{n-2}(A_n)=1$, and $\Delta_i(A_n)=\frac12(n-i)!$ for $i\leq n-3$. In particular, $A_n$ is weakly $(r,q)$-spread for
$r\leq(n-q)/e$ and $q\leq n-3$.
\item[\rm(ii)]$\min\{e_{m},o_{m}\}\geq0.15m!$ for $m\geq20$.
\item[\rm(iii)]$h_A(n,t)=0.5((n-t)!-d_{n-t}-d_{n-t-1}+(-1)^{n-t-1})$. In particular, $0.3(n-t)!\leq h_A(n,t)\leq0.33(n-t)!$ for $n-t\geq20$.
\end{itemize}
\end{lemma}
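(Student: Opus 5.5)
The plan is to prove the three parts separately. Each reduces to elementary counting plus one signed inclusion--exclusion identity for derangements.

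\emph{Part (i).} Let $Y$ be a partial permutation of size $i$. The permutations containing $Y$ form a coset of a copy of $S_{n-i}$, namely $S_n[Y]=\sigma_0\,\mathrm{Sym}(\text{remaining points})$.
\begin{itemize}
\item If $n-i\geq 2$, this coset contains a transposition-translate of each of its members. Hence it splits evenly between the two parities, and $|A_n[Y]|=\frac12(n-i)!$. This gives $\frac12(n-i)!$ for $i\leq n-3$, and also the value $1$ for $i=n-2$.
\item If $i\in\{n-1,n\}$, then $Y$ extends to a unique permutation. Choosing $Y$ inside an even permutation shows the maximum is $1$.
\end{itemize}
For the weak spreadness, I must check $\Delta_{q+s}(A_n)\leq r^{-s}\Delta_q(A_n)$ for $q\leq n-3$. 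Put $m=n-q\geq3$.
\begin{itemize}
\item When $q+s\leq n-2$, the required inequality is $(m-s)!\leq r^{-s}m!$. This follows from \eqref{equfractorial}, since $m!/(m-s)!\geq(m/e)^s\geq r^s$.
\item When $q+s\geq n-1$, I need $1\leq r^{-s}\,m!/2$ for $s\leq m$. It suffices to have $m!\geq 2(m/e)^m$, which holds for $m\geq3$ by Stirling's lower bound $m!\geq\sqrt{2\pi m}\,(m/e)^m$.
\end{itemize}

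\emph{Part (ii).} The key step is the identity $e_m-o_m=(-1)^{m-1}(m-1)$. I will prove it by signed inclusion--exclusion. Sum $\mathrm{sgn}(\sigma)$ over derangements by subtracting the permutations fixing a chosen $k$-set. This gives
\[
e_m-o_m=\sum_{k=0}^{m}(-1)^k\binom mk\sum_{\pi\in S_{m-k}}\mathrm{sgn}(\pi).
\]
The inner sum vanishes unless $m-k\leq1$. So only $k=m$ and $k=m-1$ survive, contributing $(-1)^m+(-1)^{m-1}m=(-1)^{m-1}(m-1)$. Since $e_m+o_m=d_m$, we get $\min\{e_m,o_m\}=\frac12(d_m-(m-1))$. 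By Lemma \ref{lemmadrgmt} (ii), $d_m\geq0.33m!$, and $m-1\leq0.03m!$ for $m\geq20$, which yields $\min\{e_m,o_m\}\geq0.15m!$.

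\emph{Part (iii).} From the identity in (ii), $o_s=\frac12\bigl(d_s-(-1)^{s-1}(s-1)\bigr)$. Adding this for $s=n-t$ and $s=n-t-1$, the correction terms combine to
\[
-\tfrac12\bigl[(-1)^{n-t-1}(n-t-1)+(-1)^{n-t}(n-t-2)\bigr]=-\tfrac12(-1)^{n-t-1}.
\]
Substituting into \eqref{equhAn} gives the stated formula $h_A(n,t)=\frac12\bigl(h(n,t)+(-1)^{n-t-1}\bigr)$. The numerical bounds then follow from $0.6(n-t)!\leq h(n,t)\leq0.66(n-t)!$ in Lemma \ref{lemmadrgmt} (ii), which applies with $m=n$ since $n-t\geq20$. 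The extra $\pm\frac12$ is negligible against $0.0\ast(n-t)!$.

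The main obstacle is the signed derangement identity in (ii). Everything else is bookkeeping; the only other delicate point is the edge case $q+s\geq n-1$ in (i), where Stirling is needed instead of \eqref{equfractorial}.
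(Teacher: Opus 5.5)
Your proposal is correct. Its overall plan matches the paper's: establish $e_m-o_m=(-1)^{m-1}(m-1)$, combine it with $e_m+o_m=d_m$, and read off (ii) and (iii) from derangement estimates, with (i) a direct count that the paper simply calls trivial.

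The one genuine difference is how you prove the signed identity.
\begin{itemize}
\item The paper notes that the signed sum over derangements is the Leibniz expansion of $\det(J_m-I_m)$. The spectrum of $J_m-I_m$ ($m-1$ once, $-1$ with multiplicity $m-1$) then gives the value in one line.
\item You instead run a signed inclusion--exclusion and use that $\sum_{\pi\in S_k}\operatorname{sgn}(\pi)=0$ for $k\geq2$. This needs no linear algebra and is fully self-contained and combinatorial.
\end{itemize}
The determinant is quicker if one knows the spectrum; your route is more elementary.

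Your treatment of (i) is more careful than the paper's. You explicitly check the range $q+s\in\{n-1,n\}$, where $\Delta_{q+s}(A_n)=1$ is no longer $\tfrac12(n-q-s)!$. That boundary check is exactly what keeps the weak-spreadness claim from being purely formal.

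One small point to tighten in (iii): the constants $0.6$ and $0.66$ in Lemma \ref{lemmadrgmt} (ii) leave no room to absorb the $\pm\tfrac12$. From $0.6(n-t)!\leq h(n,t)$ alone you only get $h_A(n,t)\geq0.3(n-t)!-\tfrac12$. You should invoke the actual values instead: for $k=n-t\geq20$, $h(n,t)/k!=1-d_k/k!-d_{k-1}/k!$ lies roughly in $[0.613,0.633]$. Your remark about the slack implicitly relies on this.
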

\begin{proof}
The proof of (i) is trivial. Let us prove (ii) and (iii). First, by the definition of $e_m$ and $o_m$, of course $e_m+o_m=d_m$. Next, with $\sigma$ ranging over all derangements in $S_m$,
$$e_m-o_m=
	\sum_{\sigma}
	\operatorname{sgn}(\sigma)={\rm det}(J_m-I_m)=(-1)^{m-1}(m-1).$$
%where \(J_m\) is the all-ones matrix and \(I_m\) is the identity matrix.
 Hence $e_m=(d_m+(-1)^{m-1}(m-1))/2$	and $o_m=(d_m-(-1)^{m-1}(m-1))/2$. The remaining assertions follow from these identities and the standard estimates for derangements.
\end{proof}
\begin{lemma}\label{lemmaindalt}
	Let $t\geq1$, $n\geq400t$ and $q=t+\left\lceil3\ln\binom{n}{t}\right\rceil$. Let $\mathcal{D}\subseteq A_n$, and let $X$ be a partial permutation of $[n]$ of size $t$. Suppose that $L$ is a $t$-cover of $\mathcal{D}$ and $|X\cap L|<t$.
	\begin{itemize}
		\item[\rm(i)]If $|L|\leq q$, then $|\mathcal{D}[X]|\leq0.5q(n-t-1)!$.
		\item[\rm(ii)]If $L\in A_n$, then $|\mathcal{D}[X]|\leq h_A(n,t)$, with equality precisely if $|X\cap L|=t-1$ and  $\mathcal{D}[X]$ consists of all $F\in A_n$ with $X\subseteq F$ and $F\cap(L\setminus X)\neq\emptyset$. Moreover, $|\mathcal{D}[X]|\leq0.25(n-t)!$ for $|X\cap L|\leq t-2$.
		
		\item[\rm(iii)]Suppose that $L\in A_n$ and $|X\cap L|=t-1$, and that $|X\cap K|=t-1$ for another $t$-cover $K\in A_n$ of $\mathcal{D}$ with $L(j)\neq K(j)$ for some $j\in[n]\setminus(X^{(1)}\cup\{k\})$, where $X^{(1)}=\{x:(x,y)\in X\}$ and $k$ is the index in $[n]\setminus X^{(1)}$ with  $X\setminus L=\{(X^{-1}(L(k)),L(k))\}$. Then $|\mathcal{D}[X]|\leq h_A(n,t)-\min\{e_{n-t-1},o_{n-t-1}\}$.
	\end{itemize}
\end{lemma}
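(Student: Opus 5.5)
The plan is to mirror the proof of Lemma \ref{lemmaind} inside $A_n$, keeping track of parity throughout. Every member of $\mathcal{D}[X]$ meets $L$ in at least $t$ elements, so it contains some partial permutation $H$ of size $t-s$ from $L\setminus X$, where $s=|X\cap L|$. The basic tool is that for any partial permutation $Y$ of size at most $n-3$ we have $|A_n[Y]|=\tfrac12(n-|Y|)!$, by Lemma \ref{lemmacountingalt}(i).

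For (i), I would repeat the union bound from Lemma \ref{lemmaind}(i) with this halved count. This gives $|\mathcal D[X]|\le\binom{\ell-s}{t-s}\tfrac12(n-2t+s)!$, and the same monotonicity in $s$ (using $n\ge2q$) yields $\le 0.5q(n-t-1)!$. The sizes are at most $2t\le n-3$, so Lemma \ref{lemmacountingalt}(i) applies. For the case $s\le t-2$ of (ii), every member of $\mathcal D[X]$ agrees with $L\setminus X$ on at least two points, giving $\binom{n-t}{2}\cdot\tfrac12(n-t-2)!=0.25(n-t)!$.

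The main step is the case $s=t-1$ of (ii). After a double translate by permutations of the same parity (so that $A_n$ is preserved), I will normalize $X=\{(i,i):i\in[t]\}$ and $L=(1\,n)\pi$ in a suitable form. If the translate forces $L$ to be odd, I will instead work directly with the coset structure. Then $\mathcal D[X]$ lies in the set of even permutations fixing $[t]$ that agree with $L$ at some point of $[t+1,n]$ other than the forbidden one. Its complement inside $A_n[X]$ consists of the even permutations avoiding those $n-t-1$ assignments. These split into two types: those fixing the extra point, which are derangements of $n-t-1$ points, and those not fixing it, which are derangements of $n-t$ points after relabeling. Signs must be tracked under the relabeling. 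The normalizing translate multiplies by a fixed parity, and the composition with a transposition flips it, so the excluded counts become $o_{n-t}$ and $o_{n-t-1}$ rather than $e$'s. This gives exactly $h_A(n,t)=\tfrac12(n-t)!-o_{n-t}-o_{n-t-1}$, and the equality case is read off as before. Getting this parity bookkeeping right, that is, checking which of $e_m,o_m$ appears given $L\in A_n$ and the definition in \eqref{equhAn}, is where I expect the main difficulty. I would verify it using the identity $e_m-o_m=(-1)^{m-1}(m-1)$ together with Lemma \ref{lemmacountingalt}(iii).

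For (iii), I follow Lemma \ref{lemmaind}(iii). I take the family of even permutations fixing $[t]\cup\{j\}$ and disagreeing with $K$ at all remaining points. After fixing these, at most $n-t-1$ assignments of $K$ remain relevant, and the avoiding even permutations number at least $\min\{e_{n-t-1},o_{n-t-1}\}$; this uses an $A_n$ analogue of the monotonicity in Lemma \ref{lemmadrgmt}(i), or simply the bound restricted to the appropriate parity class. I will justify this by noting that avoiding fewer assignments only enlarges the set, together with a parity-restricted count on $[t+1,n]\setminus\{j\}$. All of these permutations lie in the set bounding $\mathcal D[X]$ but outside $\mathcal D[X]$, which gives the claimed subtraction.
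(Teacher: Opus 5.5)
Your proposal is correct and takes the same route as the paper, which only says that the argument of Lemma \ref{lemmaind} carries over. You supply the needed modifications: the halved count $|A_n[Y]|=\tfrac12(n-|Y|)!$; the observation that the relabeling flips parity, so the excluded permutations number $o_{n-t}+o_{n-t-1}$; and, for (iii), extending the avoided partial assignment of $K$ to a full bijection, so that the avoiding permutations of the right parity number at least $\min\{e_{n-t-1},o_{n-t-1}\}$. One small cleanup: a same-parity double translate can never make $L$ odd, so either keep $L$ even (fixing $2,\dots,t$) and relabel via the odd permutation $(1\ L(1))L$, or use a mixed-parity translate to reach $L=(1\ n)$ and count inside the odd coset; both give the same $o$-counts.
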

The proof is essentially the same as that of Lemma \ref{lemmaind}. 
%We note that ther term $\min\{e_{n-t-1},o_{n-t-1}\}$ comes from estimating the size of $$\{\sigma\in A_n:\sigma(i)=i\;\mbox{for all}\;i\in[t]\cup\{j\},\;\sigma(i)\neq K(i)\;\mbox{for all}\;i\in[t+1,n]\setminus\{j\}\}.$$
\begin{lemma}\label{lemmatcoverkey-permalt}
	Let $\mathcal{D}\subseteq A_n$, and let $\mathcal{C}$ be a
	collection of $t$-covers of $\mathcal{D}$, each of size at most
	$\ell$. If $\tau_t(\mathcal{C})\geq m$ and $n\geq m+\ell+1$, then
	\[
	\Delta_t(\mathcal{D})
	\leq
	\frac12(\ell-t+1)^{m-t}(n-m)!.
	\]
\end{lemma}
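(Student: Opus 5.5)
The plan is to repeat the proof of Lemma \ref{lemmatcoverkey-perm} verbatim, changing only the final step where a $t$-link is bounded by $(n-|X_a|)!$. We fix a partial permutation $X$ of size $t$ with $|\mathcal{D}[X]|=\Delta_t(\mathcal{D})$ and set $X_0=X$. As long as $|X_i|<m$, the set $X_i$ is not a $t$-cover of $\mathcal{C}$, so there is some $C_i\in\mathcal{C}$ with $s_i:=|X_i\cap C_i|<t$. Every member of $\mathcal{D}[X_i]$ meets $C_i\setminus X_i$ in at least $t-s_i$ elements. By pigeonhole we can therefore pick $H_i\subseteq C_i\setminus X_i$ with $|H_i|=t-s_i$ and $X_i\cup H_i$ a partial permutation such that
$$|\mathcal{D}[X_i]|\leq\binom{|C_i|-s_i}{t-s_i}|\mathcal{D}[X_i\cup H_i]|\leq(\ell-t+1)^{t-s_i}|\mathcal{D}[X_i\cup H_i]|.$$
We then set $X_{i+1}=X_i\cup H_i$ and stop at the first index $a$ with $|X_a|\geq m$. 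As before, this gives $m\leq|X_a|\leq m+t-1$ and
$$\Delta_t(\mathcal{D})\leq(\ell-t+1)^{|X_a|-t}|\mathcal{D}[X_a]|.$$
The case $m=t$ is trivial. Here $a=0$, and we only need $|\mathcal{D}[X]|\leq|A_n[X]|=\tfrac12(n-t)!$.

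The only new ingredient is the bound on $|\mathcal{D}[X_a]|$. Since $\mathcal{D}\subseteq A_n$, we have $|\mathcal{D}[X_a]|\leq|A_n[X_a]|\leq\Delta_{|X_a|}(A_n)$. Lemma \ref{lemmacountingalt} (i) gives $\Delta_z(A_n)=\tfrac12(n-z)!$ whenever $z\leq n-3$. So we need to check $|X_a|\leq n-3$. This follows from $|X_a|\leq m+t-1$ and $n\geq m+\ell+1$, provided $\ell\geq t+2$, since then $m+t-1\leq n-\ell-2+t\leq n-4$.

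It remains to handle the degenerate cases $\ell\leq t+1$. If $\ell=t$, every member of $\mathcal{C}$ is a $t$-set contained in all members of $\mathcal{D}$. The factor $(\ell-t+1)^{z-t}$ equals $1$, and $|X_a|\leq m+t-1\leq n-1$; here I would bound $|\mathcal{D}[X_a]|$ directly. If $\mathcal{D}$ is nonempty, the $t$-sets in $\mathcal{C}$ all lie in a common permutation, so $\tau_t(\mathcal{C})$ is small, and the construction in fact yields $X_a$ with $|X_a|=m$ exactly (each step adds elements of single covers). One can also simply note that if $|X_a|\geq n-2$ then $|\mathcal{D}[X_a]|\leq1\leq\tfrac12(n-m)!$ as long as $n-m\geq2$, which holds because $n\geq m+\ell+1\geq m+2$. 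This last observation handles every case at once: whenever $|X_a|>n-3$, we have $|\mathcal{D}[X_a]|\leq1$, and the final monotonicity step still applies to the bound $\max\{1,\tfrac12(n-z)!\}$. I expect this bookkeeping at the boundary of Lemma \ref{lemmacountingalt} (i) to be the only real obstacle, and it is resolved by $n-m\geq\ell+1\geq2$.

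Finally, for $m\leq z\leq m+t-1$, the function $\tfrac12(\ell-t+1)^{z-t}(n-z)!$ is nonincreasing in $z$, because each increment multiplies it by $(\ell-t+1)/(n-z)\leq1$ when $n\geq m+\ell+1$. Hence
$$\Delta_t(\mathcal{D})\leq\tfrac12(\ell-t+1)^{m-t}(n-m)!,$$
exactly as in Lemma \ref{lemmatcoverkey-perm}.
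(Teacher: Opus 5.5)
Your proof is correct and follows the paper's route: rerun the argument of Lemma \ref{lemmatcoverkey-perm} and replace $(n-|X_a|)!$ by $|A_n[X_a]|=\tfrac12(n-|X_a|)!$. The boundary bookkeeping is unnecessary, because $|X_a|\le m+t-1\le n-2$ (using $\ell\ge t$) and $\Delta_{n-2}(A_n)=1=\tfrac12\cdot 2!$; also, your aside that $|X_a|=m$ exactly when $\ell=t$ is false in general, but your $\max\{1,\tfrac12(n-z)!\}$ observation already covers that case.
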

The proof is essentially the same as that of Lemma \ref{lemmatcoverkey-perm}.\vspace{1em}

\noindent{\bf Proof of Theorem \ref{thmcrossAn}}.\;Put $q:=t+\lceil3\ln\binom nt\rceil$ and $R:=\binom nt^{-2}(n-t)!$. Apply Theorem \ref{thmcrossspreadapp} to $(\mathcal{F},\mathcal{G})$ and let
	$(\mathcal{S},\mathcal{T})$ be the resulting pair. So $\max\{|\mathcal{F}\setminus\mathcal{F}[\mathcal{S}]|,|\mathcal{G}\setminus\mathcal{G}[\mathcal{T}]|\}\leq R$. Our goal is to prove that, if the pair does not conform to the optimum described, then $|\mathcal{F}||\mathcal{G}|<(h_A(n,t)+t)^2$. To prove this, it is convenient to use the bound $h_A(n,t)\geq0.3(n-t)!$ given in Lemma \ref{lemmacountingalt} (iii).
	
If $\mathcal{S}=\emptyset$ or $\mathcal{T}=\emptyset$, one of
	$\mathcal{F},\mathcal{G}$ has size at most $R$. Since a member
	of the opposite family is a $t$-cover, the other one has size at most $0.5\binom{n}{t}(n-t)!$. Then $|\mathcal{F}||\mathcal{G}|\leq0.5\binom{n}{t}(n-t)!R<h_A(n,t)^2$. 
	
	In what follows, suppose that $\mathcal{S},\mathcal{T}\neq\emptyset$. Perform
	Algorithm \ref{algo} to $(\mathcal{S},\mathcal{T})$. Let $N$ be the number of rounds and $(\mathcal{S}_{i},\mathcal{T}_{i},\mathcal{X}_{i},\mathcal{Y}_i)$ $(i\leq N)$ be the output families.
	
	If $N<q-t$, then by the same argument as in the proof of Lemma \ref{lemmagroupproduct} using Lemmas \ref{lemmadecomp} and \ref{lemmatcoverkey-permalt}, we get 
	\[
	|\mathcal F[\mathcal S]|\,|\mathcal G[\mathcal T]|
	\le\frac14\cdot\left(\frac2{15}+\frac{e^{1/5}}{20}\right)
	((n-t)!)^2
	<0.05((n-t)!)^2,
	\]
and then obtain 	
	\begin{align*}
		|\mathcal{F}||\mathcal{G}|&\leq|\mathcal{F}[\mathcal{S}]||\mathcal{G}[\mathcal{T}]|+R(|\mathcal{F}[\mathcal{S}]|+|\mathcal{G}[\mathcal{T}]|)+R^2\\
		&<0.05((n-t)!)^2+2R\cdot0.5\binom{n}{t}(n-t)!+R^2<h_A(n,t)^2.
	\end{align*}
	
	We may therefore assume that $N=q-t$. By Lemma \ref{lemmafin-stru2}, there is a partial permutation $X$ of
	size $t$ such that
	$\mathcal{S}_{q-t-1}=\mathcal{T}_{q-t-1}=\{X\}$. Let $j$ be the
	smallest index for which
	$\mathcal{S}_j=\mathcal{T}_j=\{X\}$.
	
	If $j>0$, the argument from Case 1 in the proof of Lemma \ref{lemmafin-stru2} yields (after possibly interchanging the two
	families) that 
\begin{align*}
|\mathcal{F}|&\leq\frac{q+0.1t}{2}(n-t-1)!+R\;\;\;\mbox{and}\;\;|\mathcal{G}|\leq\frac12(n-t)!+0.05t(n-t-1)!+R.
\end{align*}
Since $n\geq16q$, their product is strictly smaller than $h_A(n,t)^2$.
	
It remains to consider the case of $j=0$. Then
	$\mathcal{S}_0=\mathcal{T}_0=\{X\}$, and hence
	\[
	\max\{
	|\mathcal{F}\setminus\mathcal{F}[X]|,
	|\mathcal{G}\setminus\mathcal{G}[X]|
	\}
	\leq R.
	\]
	Both sets on the left are non-empty, since
	$\mathcal{F}$ and $\mathcal{G}$ are non-trivial.
	
	Suppose by symmetry that some
	$L\in\mathcal{F}\setminus\mathcal{F}[X]$ satisfies
	$|L\cap X|\leq t-2$. Choosing any
	$M\in\mathcal{G}\setminus\mathcal{G}[X]$ and applying
	Lemma \ref{lemmaindalt} (ii) in both directions gives $
	|\mathcal{G}[X]|\leq\frac14(n-t)!$ and $|\mathcal{F}[X]|\leq h_A(n,t)$. Then 
	$|\mathcal{F}||\mathcal{G}|\leq(|\mathcal{F}[X]|+R)(|\mathcal{G}[X]|+R)<h_A(n,t)^2$. We may therefore assume that $|F\cap X|\geq t-1$ for all $F\in\mathcal{F}\cup\mathcal{G}$.
	
	Choose $\alpha\in\mathcal{F}\setminus\mathcal{F}[X]$. We may suppose, up to a double translation preserving the parity, that
	$X=\{(1,1),\ldots,(t,t)\}$ and that $\alpha$ fixes
	$2,\ldots,t$ but not $1$. 
	
	Choose $M\in\mathcal{G}\setminus\mathcal{G}[X]$, and let
	$i_0$ be the unique point of $[t]$ not fixed by $M$. Put
	$\beta:=(1\ i_0)M(1\ i_0)$. Then we can prove the following claim using Lemma \ref{lemmaindalt} (iii) (by the same argument as in the proof of  Claim \ref{claimlemmafin-stru21}).
	\begin{claim}\label{claimalt}
		The following hold.
		\begin{itemize}
			\item[\rm(i)]Either $|\mathcal G[X]|\leq h_A(n,t)-\min\{e_{n-t-1},o_{n-t-1}\}$ or 
			$\mathcal F\setminus\mathcal F[X]
			\subseteq\{(1\ i)\alpha(1\ i):i\in[t]\}$.
			\item[\rm(ii)]Either $|\mathcal F[X]|\leq h_A(n,t)-\min\{e_{n-t-1},o_{n-t-1}\}$ or $\mathcal G\setminus\mathcal G[X]
			\subseteq\{(1\ i)\beta(1\ i):i\in[t]\}$.
		\end{itemize}
	\end{claim}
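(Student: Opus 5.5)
We will mirror the proof of Claim \ref{claimlemmafin-stru21}, replacing Lemma \ref{lemmaind} (iii) by its alternating analogue Lemma \ref{lemmaindalt} (iii). We first record the containments that replace \eqref{equlemmafin-stru22} and \eqref{equlemmafin-stru23}.

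Since $\alpha\in\mathcal{F}$ fixes $2,\ldots,t$ but not $1$, write $\alpha(k_1)=1$ with $k_1\notin[t]$. Every $\sigma\in\mathcal{G}[X]$ already agrees with $\alpha$ at $2,\ldots,t$. It cannot agree with $\alpha$ at $1$ or at $k_1$, so it must agree with $\alpha$ at some $i\in[t+1,n]\setminus\{k_1\}$. Hence $\mathcal{G}[X]$ lies in the family counted in Lemma \ref{lemmaindalt} (ii) with $L=\alpha$, which has size $h_A(n,t)$. Symmetrically, $\mathcal{F}[X]$ lies in the corresponding family for $L=M$, with $k_0=M^{-1}(i_0)$, and this family can be rewritten in terms of $\beta$.

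For part (i), the key combinatorial step is to identify the permutations that fix exactly $t-1$ points of $[t]$ and agree with $\alpha$ on every $j\in[t+1,n]\setminus\{k_1\}$. We will show these are exactly $(1\ i)\alpha(1\ i)$ for $i\in[t]$. Indeed, each such conjugate has the property. Conversely, suppose $\sigma$ has the property and let $i$ be its unique non-fixed point in $[t]$. Then $\sigma$ agrees with $\alpha$ off $\{i,k_1\}$, which forces $\sigma(i)=\alpha(1)$ and $\sigma(k_1)=i$, so $\sigma=(1\ i)\alpha(1\ i)$.

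Now suppose some $F_0\in\mathcal F\setminus\mathcal F[X]$ is not of this form. By the reduction made before the claim, $|F_0\cap X|=t-1$, so $F_0(j)\neq\alpha(j)$ for some $j\in[t+1,n]\setminus\{k_1\}$. We then apply Lemma \ref{lemmaindalt} (iii) with $\mathcal D=\mathcal G$, $L=\alpha$, $K=F_0$ and $k=k_1$. Both $\alpha$ and $F_0$ lie in $\mathcal{F}\subseteq A_n$ and are $t$-covers of $\mathcal G$. The lemma gives $|\mathcal G[X]|\leq h_A(n,t)-\min\{e_{n-t-1},o_{n-t-1}\}$.

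Part (ii) is identical with the roles of the two families swapped. Take $L=M\in\mathcal G\subseteq A_n$, and let $k=k_0$ be the index with $M(k_0)=i_0$, which is the $k$ required in Lemma \ref{lemmaindalt} (iii). The same characterization shows that the permutations fixing exactly $t-1$ points of $[t]$ and agreeing with $M$, equivalently with $\beta$, on $[t+1,n]\setminus\{k_0\}$ are precisely $(1\ i)\beta(1\ i)$ for $i\in[t]$. Note that $\beta(j)=M(j)$ for $j\notin\{1,i_0\}$. If some $G_0\in\mathcal{G}\setminus\mathcal G[X]$ is not of this form, applying the lemma with $\mathcal D=\mathcal F$, $L=M$, $K=G_0$ gives the bound on $|\mathcal F[X]|$.

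The only point needing care, and the likely obstacle, is parity. We must check that the conjugates $(1\ i)\alpha(1\ i)$ are indeed even, which holds since conjugation preserves sign. We must also check that the normalizing double translation was parity-preserving, so that $\alpha,M,F_0,G_0$ are genuinely in $A_n$ and Lemma \ref{lemmaindalt} (iii) applies. The hypothesis of that lemma that $L$ and $K$ disagree at some $j$ outside $X^{(1)}\cup\{k\}$ is exactly what the characterization delivers.
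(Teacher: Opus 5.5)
Your proposal is correct and takes the same route as the paper, which proves this claim by rerunning the argument of Claim \ref{claimlemmafin-stru21}(ii). That argument characterizes the permutations fixing exactly $t-1$ points of $[t]$ and agreeing with $\alpha$ (resp.\ $\beta$, equivalently $M$) on $[t+1,n]\setminus\{k\}$ as the conjugates $(1\ i)\alpha(1\ i)$ (resp.\ $(1\ i)\beta(1\ i)$), and then applies Lemma \ref{lemmaindalt}(iii) with $k=\alpha^{-1}(1)$ (resp.\ $k=k_0$). There are two harmless slips: for $i\neq 1$, $\sigma$ does not agree with $\alpha$ at $1$ (what you need is that $\sigma$ is determined off $\{i,k_1\}$, leaving only the values $\{i,\alpha(1)\}$), and $\beta(j)=M(j)$ holds only for $j\notin\{1,i_0,k_0\}$ since $\beta(k_0)=1$, which is all you actually use.
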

Suppose first that the first alternative in either (i) or (ii) holds. Without loss of generality, suppose $|\mathcal G[X]|\leq h_A(n,t)-\min\{e_{n-t-1},o_{n-t-1}\}$. This together with Lemma \ref{lemmacountingalt} and the fact that $R=\binom{n}{t}^{-2}(n-t)!$ gives
$$|\mathcal{G}|\leq h_A(n,t)-\min\{e_{n-t-1},o_{n-t-1}\}+R<h_A(n,t)-R.$$
The other one has size at most $h_A(n,t)+R$. It follows that 
	\begin{align*}
		|\mathcal{F}||\mathcal{G}|<(h_A(n,t)+R)(h_A(n,t)-R)<h_A(n,t)^2.
	\end{align*}
Finally, suppose $\mathcal F\setminus\mathcal F[X]
\subseteq\{(1\ i)\alpha(1\ i):i\in[t]\}$ and $\mathcal G\setminus\mathcal G[X]
\subseteq\{(1\ i)\beta(1\ i):i\in[t]\}$. Then we derive $\mathcal F\subseteq\mathcal H(\alpha,\beta)\cap A_n$
and
$\mathcal G\subseteq\mathcal H(\beta,\alpha)\cap A_n$, and thus $|\mathcal{F}||\mathcal{G}|\leq(h_A(n,t)+t)^2$. It is easy to characterize equality. {\hfill $\square$}
\section*{Acknowledgments}
B. Lv is supported by the National Natural Science Foundation of China (12571347 \& 12131011), and the Beijing Natural Science Foundation (1252010). 
\bigskip

\noindent{\bf Declaration on the use of AI.}\;The authors used AI tools to assist with improving the language and checking technical details. All AI-assisted
revisions were reviewed and approved by the authors before being
incorporated into the manuscript. The authors take full responsibility
for the accuracy, integrity, and originality of the manuscript.

\addcontentsline{toc}{chapter}{Bibliography}

{
	}
\end{document}